\newcommand{\abs}[1]{\lvert#1\rvert}
\DeclareMathOperator{\ad}{AD}
\DeclareMathOperator{\bd}{BD}
\newcommand*\bigcdot{\mathpalette\bigcdot@{1}}
\newcommand*\bigcdot@[2]{\mathbin{\vcenter{\hbox{\scalebox{#2}{$\m@th#1\bullet$}}}}}
\definecolor{ao(english)}{rgb}{0.0, 0.0, 1.0}
\newtheorem{theorem}{Theorem}
\newtheorem{lemma}{Lemma}
\newtheorem{remark}[theorem]{Remark}
\title[A Novel Approach to Counting Perfect Matchings of Graphs]{A Novel Approach to Counting Perfect Matchings of Graphs}
\author[P. Paul]{Pravakar Paul}
\address{Mathematical and Physical Sciences division, School of Arts and Sciences, Ahmedabad University, Ahmedabad 380009, Gujarat, India}
\email{pravakar.paul@ahduni.edu.in, manjil@saikia.in}
\author[M. P. Saikia]{Manjil P. Saikia}
\keywords{Perfect matchings, domino tilings, Aztec diamonds, alternating sign matrices.}
\subjclass[2010]{Primary 05C30; Secondary 05A15, 05C70, 18A10, 52C20}
\begin{document}

\begin{abstract}
We build a new perspective to count perfect matchings of a given graph. This idea is motivated by a construction on the relative cohomology group of surfaces. As an application of our theory, we reprove the celebrated Aztec Diamond theorem, and show how alternating sign matrices naturally arises through this framework.
\end{abstract}

\maketitle

\section{Introduction}

For a graph $G=(V,E)$, there are two problems of interest that have eluded geometers, enumerative combinatorialists, graph theorists, physicists, and theoretical computer scientists for the past century:
\begin{enumerate}[label=(\Alph*)]
    \item Does there exist a perfect matching for the given graph $G$ ? More generally, study the invariants of $G$ that leads to the obstruction to perfect matching.
    \item If the answer to the first question is affirmative, can we enumerate the number of perfect matchings of $G$? 
\end{enumerate}

Given a graph $G=(V,E)$, a perfect matching in $G$ is a subset $E^\prime$ of the edge set $E$, such that every vertex in the vertex set $V$ is adjacent to exactly one edge in $E^\prime$. The first problem is topological in nature, whereas the second problem is combinatorial.  Pioneering work in these directions have been done by many mathematicians, in particular by Tutte \cite{Tutte}, Kasteleyn \cite{Kasteleyn}, Temperley \& Fisher \cite{temp}, Thurston \cite{Thurston}, Conway \& Lagarias \cite{ConwayLagarias}, etc. For related work in different directions, we refer the reader to the books by Lov{\'a}sz \& Plummer \cite{LovaszPlummer}, and by Lucchesi \& Murty \cite{LucchesiMurty}. For a short survey, one can refer to \cite[Sections 1--3]{Thomas}.

In general, counting the number of perfect matchings of a graph is a difficult problem; for instance, Valiant \cite{Valiant} showed that there is no polynomial time algorithm for counting perfect matchings in general graphs (even for bipartite graphs), unless $\P=\NP$. Nonetheless, over the last several decades, mathematicians have studied this problem for specific graphs $G$ and have come up with several techniques to find the number of perfect matchings of $G$ (most of which are bipartite graphs). In this paper, we continue this study and build a new perspective to count perfect matchings of a graph (not necessarily bipartite).

The motivation for this work originated from the following construction in algebraic topology. 
\begin{figure}[H]
    \centering
    \includegraphics[width=0.55\linewidth]{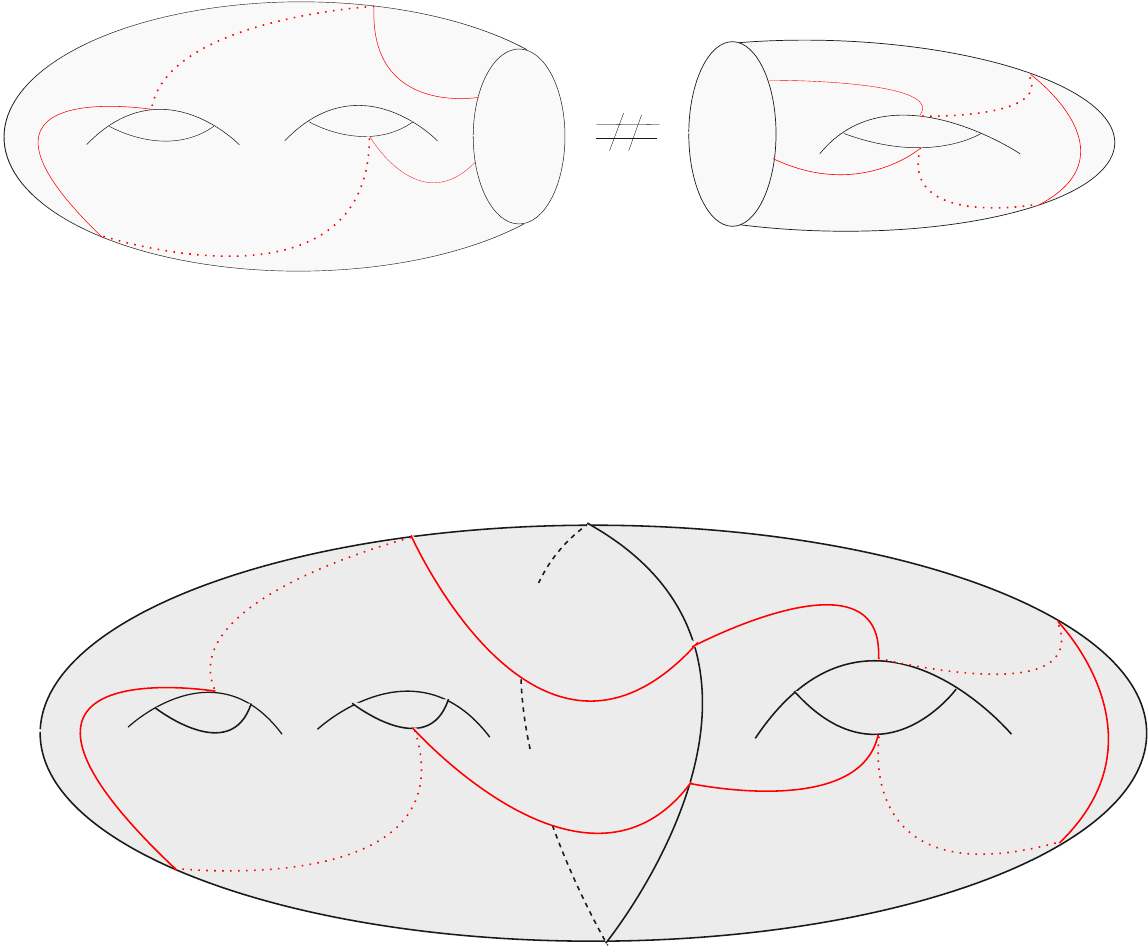}
    \caption{The pairing map $\varphi$ on relative homology group.}
    \label{fig:1}
\end{figure}
Assume $ \left( S^{g_1}, \partial S^{g_1}  \right)$ and $ \left( S^{g_2}, \partial S^{g_2}  \right)$ are two surfaces of genus $g_1$ and $g_2$ with boundaries  respectively. Additionally, let us assume 
 \[ \partial S^{g_1} \cong \partial S^{g_2} \cong S^{1}  .\]
 We can construct a closed surface $\Sigma^{g_1+g_2}$ of genus $ \left( g_1+g_2 \right)$ by identifying their respective boundary components through a homeomorphism $ f: \partial S^{g_1} \to \partial S^{g_2} $. In this situation, there exists a canonical pairing map 
\[ \varphi: H_{1}  \left( S^{g_1}, \partial S^{g_1} \right) \otimes H_{1} \left( S^{g_2}, \partial S^{g_2} \right)  \longrightarrow H_{1} \left( \Sigma^{g_1+g_2} \right), \]
where $H_{1}  \left( S^{g_i}, \partial S^{g_i} \right)$ denotes the relative homology group relative to the boundary $\partial S^{g_i}$ and $ H_{1} \left( \Sigma^{g_1+g_2} \right)$ denotes the absolute homology group. The map is defined by gluing the respective representatives of the homology classes boundary-wise. For $g_1=2$ and $g_2=1$ see Figure \ref{fig:1}. We think of a relative homology class in the pairing map $\varphi$ to correspond to a perfect matching of the graph $G=(V,E)$ with a \textit{defect}, that is, a collection of missing vertices, and a absolute homology class as a perfect  matching of $G$ (see Theorem \ref{thm:main} for the analogy). In this paper we have essentially \textit{combinatorified} the map $\varphi$.

Assume $G_1=(V_1,E_1)$ and $G_2=(V_2, E_2)$ are two graphs with a choice of $n$-many distinguished vertices of $V_i$ for $i=1, 2$. Call them $\{x_1, \ldots, x_n \} \subset V_1$ and $\{y_1, \ldots, y_n \} \subset V_2$ respectively. We define the \textit{connected sum} along these distinguished vertices as   \[ G_1 \# G_2 := G_1 \cupdot G_2 / x_i \sim y_i , \forall 1 \leq i \leq n \] 
where $\cupdot$ denotes the disjoint union.  
The vertex set of $G_1 \# G_2$ is defined as $V_1 \cupdot V_2 / x_i \sim y_i$, for all $1 \leq i \leq n $ and the edge set as $E_1 \cupdot E_2 $. 
For any graph $G$, let $M(G)$ denote the number of perfect matchings of $G$. The goal is to understand $M(G_1 \# G_2)$ in terms of $M(G_1)$ and $M(G_2)$.

We define an algebra structure that captures the behavior of  $M(G_1 \# G_2)$ along the boundary. 
Define the \textit{matching algebra} $\mathcal{M}$ over $\mathbb{Z}$ with two generators $y$ and $n$ given by the following relations
\[ \mathcal{M} := \mathbb{Z} \left<y,n \right>/ \left<   yn=yn=n; n^2=0; y^2=y \right>\]
where $G(\epsilon_1, \cdots , \epsilon_{n}) \geq 0$. Here the variable $y$ indicates \textit{yes} or \textit{presence} and the variable $n$ indicates \textit{no} or \textit{absence}. For a graph $G$ with a choice of $n$ distinguished vertices $\{x_1, \ldots, x_n \} \subset V $ we shall assign an element $v_{G} \in \mathcal{M}^{\otimes n} $ of the form \[ v_{G}= \sum_{\epsilon_{i} \in \{ y,n \}} G(\epsilon_1, \ldots ,\epsilon_n) \epsilon_1 \otimes \cdots \otimes \epsilon_n. \] Define an involution $ \bar{\epsilon}$ which switches $y$ to $n$ and $n$ to $y$. That is, as a set we have \[ \{\epsilon, \bar{\epsilon} \}= \{ y,n\}. \] Our main result is this paper is the following theorem.
\begin{theorem}\label{thm:main}
    For graphs $G_{1}= (V_1 , E_1)$ and $G_2=(V_2, E_2)$ with a choice of distinguished vertices as mentioned above, let $v_{G_i}$ denote the element defined as in the previous paragraph. Then  
    \[ M(G_1 \# G_2)= \sum G_{1}(\epsilon_1, \ldots, \epsilon_n ) G_{2}(\bar{\epsilon_1}, \ldots,  \bar{\epsilon_n}). \]
\end{theorem}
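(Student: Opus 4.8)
The plan is to prove the identity by an explicit bijection, matching perfect matchings of $G_1\#G_2$ against compatible pairs of near-perfect matchings of $G_1$ and $G_2$. Throughout I use the combinatorial reading that the coefficients are meant to encode: for a graph $G$ with distinguished vertices $x_1,\dots,x_n$, the number $G(\epsilon_1,\dots,\epsilon_n)$ counts the matchings $M'\subseteq E$ of $G$ whose set of \emph{uncovered} vertices is exactly $\{x_i:\epsilon_i=n\}$, so that every non-distinguished vertex and every $x_i$ with $\epsilon_i=y$ is covered while every $x_i$ with $\epsilon_i=n$ is a defect. Writing $\mathcal{P}_i(\epsilon)$ for the set of such matchings of $G_i$ (so $|\mathcal{P}_i(\epsilon)|=G_i(\epsilon)$), the theorem becomes the assertion that the perfect matchings of $G_1\#G_2$ are in bijection with $\bigsqcup_{\epsilon\in\{y,n\}^n}\mathcal{P}_1(\epsilon)\times\mathcal{P}_2(\bar\epsilon)$.

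First I would record the structural fact that carries the argument: since the edge set of $G_1\#G_2$ is $E_1\sqcup E_2$, every $M\subseteq E(G_1\#G_2)$ splits canonically as $M=(M\cap E_1)\sqcup(M\cap E_2)$, and the only vertices incident to edges of both parts are the glued ones $x_i\sim y_i$, each other vertex being incident only to edges of the single part it came from. The forward map sends a perfect matching $M$ to $(\epsilon,M_1,M_2)$ with $M_1=M\cap E_1$, $M_2=M\cap E_2$, and $\epsilon_i=y$ iff the unique edge of $M$ at $x_i\sim y_i$ lies in $E_1$; the structural fact gives $M_1\in\mathcal{P}_1(\epsilon)$, and since that edge lies in $E_2$ exactly when $\epsilon_i=n$, i.e. when $\bar\epsilon_i=y$, it gives $M_2\in\mathcal{P}_2(\bar\epsilon)$. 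The inverse sends $(\epsilon,M_1,M_2)\in\mathcal{P}_1(\epsilon)\times\mathcal{P}_2(\bar\epsilon)$ to $M_1\cup M_2$; I would check that this is a perfect matching by a local count: a non-distinguished vertex is covered exactly once, by its own part, and a glued vertex $x_i\sim y_i$ is covered on the $G_1$-side iff $\epsilon_i=y$ and on the $G_2$-side iff $\bar\epsilon_i=y$, i.e. iff $\epsilon_i=n$, hence exactly once in every case. Mutual inversion is immediate from $M\cap E_1=M_1$, $M\cap E_2=M_2$ (as $E_1\cap E_2=\emptyset$) together with the definition of $\epsilon$. Summing $|\mathcal{P}_1(\epsilon)\times\mathcal{P}_2(\bar\epsilon)|=G_1(\epsilon)G_2(\bar\epsilon)$ over $\epsilon\in\{y,n\}^n$ then gives the theorem.

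The main obstacle I anticipate is definitional rather than computational: one must pin down $G_i(\epsilon)$ so that the defect set is required to be \emph{exactly} $\{x_i:\epsilon_i=n\}$ and not merely contained in it, because it is precisely the impossibility of a glued vertex being a defect on both sides (``$n$ against $n$'') or covered on both sides (``$y$ against $y$'') that forces the second argument of $G_2$ to be the complement $\bar\epsilon$; this is the combinatorial content of the relation $n^2=0$ and the involution $\bar{(\cdot)}$ in $\mathcal{M}$, and getting it into the definitions is what makes the local count at the glued vertices airtight. A minor point worth stating explicitly is that degenerate inputs — parts with too few edges, isolated distinguished vertices, an odd total vertex count — need no special handling, since the relevant $\mathcal{P}_i(\epsilon)$ are then empty and both sides vanish. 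Finally, if one wishes to phrase the result inside the framework of the excerpt, the right-hand side is exactly $\ell^{\otimes n}$ applied to the component-wise product $v_{G_1}\cdot v_{G_2}$ in $\mathcal{M}^{\otimes n}$, where $\ell\colon\mathcal{M}\to\mathbb{Z}$ is the linear functional with $\ell(y)=0$ and $\ell(n)=1$; this realises the combinatorified version of the pairing map $\varphi$, and the bijection above is the statement that it computes $M(G_1\#G_2)$.
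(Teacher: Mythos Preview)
Your argument is correct and is essentially the paper's own proof written out with more care: both split a perfect matching of $G_1\#G_2$ as $(M\cap E_1,\,M\cap E_2)$, observe that each glued vertex is covered on exactly one side, and read off the bijection with $\bigsqcup_\epsilon \mathcal{P}_1(\epsilon)\times\mathcal{P}_2(\bar\epsilon)$. One cosmetic mismatch worth flagging: the paper's convention is the reverse of yours --- there $\epsilon_i=y$ means the $i$-th distinguished vertex is \emph{deleted} (a defect in $G$, hence available to the partner) and $\epsilon_i=n$ means it is matched inside $G$ --- but since the sum $\sum_\epsilon G_1(\epsilon)G_2(\bar\epsilon)$ is invariant under $\epsilon\leftrightarrow\bar\epsilon$, this does not affect the validity of your proof.
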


As an application of Theorem \ref{thm:main} we prove the celebrated Aztec diamond theorem later.

This paper is organized as follows: in Section \ref{sec:proof} we prove Theorem \ref{thm:main} and related lemmas. We also provide some simple applications which we use in the later sections of the paper. In Section \ref{sec:ad} we explain two objects of interest, namely Aztec Diamonds and alternating sign matrices. In Section \ref{sec:ad1} we give a proof of one version of the celebrated Aztec Diamond Theorem (to be explained in Section \ref{sec:ad}) using Theorem \ref{thm:main}. In Section \ref{sec:ad2} we give a proof of another version of the Aztec Diamond Theorem using Theorem \ref{thm:main}. We finally close the paper with some concluding remarks in Section \ref{sec:conc}. Throughout this paper we shall use red color to draw the distingushed vertices. 

\section{Proof of Theorem \ref{thm:main} and Some Simple Applications}\label{sec:proof}

For any graph $G= (V,E)$ with $n$ distinguished vertices $\{v_1, \ldots, v_n \} \subset V$ we associate a non-negative integer $G(\epsilon_1, \ldots, \epsilon_n)$ defined as the number of perfect matchings of the subgraph  of $G$ where we delete the $i$-th vertex if $\epsilon_i = y$ or we keep it if $\epsilon_i= n$. Note that we have the following identity
 \[G(n,\ldots, n)= M(G). \]
 Said differently, $\epsilon_i=y$ indicates the situation where the identified vertex $v_i$ is available to match with a vertex in the other graph in the connected sum. Whereas, $\epsilon_i=n$ indicates the situation where the identified vertex $v_i$ is already matched with some other vertex in $G$.  With this understanding we associate the element $v_{G} \in \mathcal{M}^{\otimes n}$ which is a sum of $2^{n}$ terms. We call this the \textit{state sum expansion} of $G$ associated to $\{v_1, \cdots, v_n \}$. Next we prove Theorem \ref{thm:main}.
\begin{proof}[Proof of Theorem \ref{thm:main}]
    If $X$ is a perfect matching of $G_1 \# G_2$, then $X$ matches the identified vertex $x_i=y_i$ either to a vertex in $G_1$ or to a vertex in $G_2$, for all $i$. If it matches with a vertex in $G_1$ then it must not be available for any vertex in $G_2$ to match with. Thus $\epsilon_i=n$ and $ \bar{\epsilon} = y$ in this case. On the other hand, if $X$ matches it with a vertex in $G_2$ then it must not be available for any vertex in $G_1$ to match with. In this case  $\epsilon_i=y$. In the case of a perfect matching $X$ notice that we must have $ \{ \epsilon_i, \bar{\epsilon_i} \}= \{y, n \}$. Conversely, we observe that any perfect matching of $G_1 \# G_2$ arrives in this way. For different choices of $\epsilon_1 \otimes \ldots \otimes \epsilon_n$ yield different perfect matchings of $G_1 \# G_2$. 
\end{proof}

\noindent Although the proof of the main theorem is deceptively simple, we can still get quite a lot out of it. In the next series of lemmas we show some simple applications of Theorem \ref{thm:main} with some explicit calculations.
\begin{lemma}\label{lem1}
    The state sum expansion of the graph $G$ with two distinguished vertices $\{v_1, v_2\}$ is given by 
    \begin{equation}\label{eq:0}
        y \otimes n + n \otimes y,
    \end{equation} where $v_1$ and $v_2$ are the bottom vertices from left to right of Figure \ref{fig:lem1}.
\end{lemma}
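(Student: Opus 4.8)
The plan is to compute the state sum expansion of $G$ straight from the definition. By construction
\[ v_G \;=\; \sum_{\epsilon_1,\epsilon_2\in\{y,n\}} G(\epsilon_1,\epsilon_2)\,\epsilon_1\otimes\epsilon_2, \]
where $G(\epsilon_1,\epsilon_2)$ is the number of perfect matchings of the subgraph of $G$ obtained by deleting $v_i$ precisely when $\epsilon_i=y$. Hence it suffices to evaluate the four integers $G(y,y),\,G(y,n),\,G(n,y),\,G(n,n)$ by inspecting the graph in Figure \ref{fig:lem1} and then to substitute.

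First I would dispose of the two ``diagonal'' coefficients. Keeping both distinguished vertices gives $G(n,n)=M(G)$, and since the graph of Figure \ref{fig:lem1} has an odd number of vertices it admits no perfect matching, so $G(n,n)=0$. Deleting both $v_1$ and $v_2$ again leaves a subgraph with an odd number of vertices, so for the same parity reason $G(y,y)=0$. For the two ``off-diagonal'' coefficients: deleting $v_1$ while keeping $v_2$ leaves a single edge, which has exactly one perfect matching, so $G(y,n)=1$; and $G(n,y)=1$ by the identical computation with the roles of $v_1$ and $v_2$ exchanged, equivalently by the left--right symmetry of the figure.

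Substituting these four values into the state sum expansion annihilates the $y\otimes y$ and $n\otimes n$ terms and leaves $1\cdot(y\otimes n)+1\cdot(n\otimes y)$, which is exactly \eqref{eq:0}. There is no real obstacle here: the whole argument is a four-case inspection, and it uses only the elementary facts that a graph with an odd number of vertices has no perfect matching (for the two vanishing coefficients) and that an isolated edge has a unique perfect matching (for the two surviving ones). The only point that genuinely requires a glance at Figure \ref{fig:lem1} is to confirm that removing exactly one of $v_1,v_2$ leaves a perfectly matchable remnant, and by the symmetry of the picture this holds on both sides.
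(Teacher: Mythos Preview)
Your proof is correct and is essentially the same approach as the paper's: both amount to the same four-case inspection of what can happen at the two distinguished vertices. The only cosmetic difference is that the paper phrases the argument in terms of the forced choice for the top vertex $w$ (it must match $v_1$ or $v_2$, immediately yielding the two surviving terms), whereas you systematically evaluate each coefficient $G(\epsilon_1,\epsilon_2)$ from the definition; the content is identical.
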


\begin{figure}[!htb]
    \centering
    \includegraphics[width=0.15\linewidth]{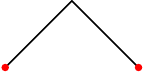}
    \caption{The graph $G$ in Lemma \ref{lem1}.}
    \label{fig:lem1}
\end{figure}

\begin{proof}
    Let $w$ denote the top vertex. The graph $G$ is connected along $v_1$ and $v_2$. The top vertex $w$ must be matched with either $v_1$ or $v_2$. If $w$ is matched with $v_1$, then $v_1$ can not contribute in the matching $M$. Also, the other vertex $v_2$ must be present in $M$. Thus $n\otimes y$ is the contribution from this matching. Reversing the role of $v_1$ and $v_2$ we get the other term.
\end{proof}


\begin{lemma}\label{lem2}
    The state sum expansion of the graph $G$ with three distinguished vertices $\{v_1, v_2, v_3\}$ is given by \begin{equation}\label{eq:1}
        y\otimes n \otimes n + n\otimes y \otimes n + n\otimes n \otimes y,
    \end{equation} where $v_1, v_2$ and $v_3$ are the bottom vertices from left to right of Figure \ref{fig:lem2}.
\end{lemma}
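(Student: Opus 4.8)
The plan is to mimic the proof of Lemma~\ref{lem1}, now keeping track of the two top vertices of $G$; call them $a$ (joined to $v_1$ and $v_2$) and $b$ (joined to $v_2$ and $v_3$), as in Figure~\ref{fig:lem2}. Recall from the paragraph preceding Theorem~\ref{thm:main} that $G(\epsilon_1,\epsilon_2,\epsilon_3)$ is the number of perfect matchings of the graph obtained from $G$ by deleting $v_i$ precisely when $\epsilon_i=y$ and retaining it when $\epsilon_i=n$. The vertices $a$ and $b$ are never deleted, so in every such matching $a$ is matched to $v_1$ or to $v_2$, and $b$ is matched to $v_2$ or to $v_3$; this restricted‑neighbourhood observation is the only structural input needed.

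Next I would organise the case analysis around the central vertex $v_2$. If the matching uses the edge $\{a,v_2\}$, then $v_1$ has no available neighbour and so must have been deleted, forcing $\epsilon_1=y$; the vertex $b$ is then forced onto $v_3$, so $v_3$ is retained and matched, $\epsilon_3=n$, and of course $\epsilon_2=n$. Hence this case contributes exactly the monomial $y\otimes n\otimes n$, with coefficient $1$. The left--right symmetry of $G$ interchanges the roles of $(v_1,a)$ and $(v_3,b)$, so the mirror case in which $\{b,v_2\}$ is used contributes $n\otimes n\otimes y$. In the only remaining possibility $v_2$ is matched to neither $a$ nor $b$; inspecting the graph, this can happen inside a retained subgraph only if $v_2$ itself has been deleted, $\epsilon_2=y$, whereupon $a$ is forced onto $v_1$ and $b$ onto $v_3$, giving $n\otimes y\otimes n$. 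Each case yields exactly one matching, so summing produces precisely the expression in \eqref{eq:1}.

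It then remains to check that the other five sign vectors contribute nothing. Since $G$ has five vertices, the prescribed deletions leave $5-\#\{i:\epsilon_i=y\}$ vertices; for $(n,n,n)$ and for the three vectors with exactly two $y$'s this count is odd, so no perfect matching exists, while for $(y,y,y)$ the vertices $a$ and $b$ are left isolated, so again there is none. (This parity phenomenon is the reason the gadget carries two top vertices, which are not adjacent to one another.) I do not expect a genuine obstacle: as with Lemma~\ref{lem1}, the statement is a finite verification, the one delicate point being the bookkeeping convention ($\epsilon_i=y\leftrightarrow v_i$ removed and hence available to the other summand, $\epsilon_i=n\leftrightarrow v_i$ retained and matched inside $G$). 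One could alternatively try to deduce the lemma from Lemma~\ref{lem1} by viewing $G$ as two copies of that gadget glued at $v_2$, but since the connected sum of this paper absorbs all distinguished vertices at once, such a derivation would require a relative refinement of Theorem~\ref{thm:main}; the direct check above is shorter.
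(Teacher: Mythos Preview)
Your proof is correct and follows essentially the same direct case analysis as the paper's; you pivot on the central vertex $v_2$, whereas the paper simply lists the three compatible matchings of the top vertices $w_1,w_2$, but the content is identical (and your explicit parity check for the remaining five sign vectors is a nice addition). As an aside, the ``relative refinement of Theorem~\ref{thm:main}'' you mention in your final paragraph is exactly what the paper supplies immediately afterward in Remark~\ref{rem:2} and the Patching Lemma (Lemma~\ref{lem3}), so that alternative route is in fact available.
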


\begin{figure}[!htb]
    \centering
    \includegraphics[width=0.35\linewidth]{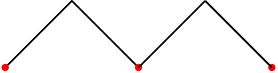}
    \caption{The graph $G$ in Lemma \ref{lem2}.}
    \label{fig:lem2}
\end{figure}

\begin{proof}
    Let $w_1$ and $w_2$ denote the top vertices from left to right. As $w_i's$ do not contribute in the connected sum, they must be matched with $v_j's$. There are three possibilities:
    \begin{itemize}
        \item  $w_1$ matches with $v_1$ and $w_2$ matches with $v_2$ contributing the term $n \otimes n \otimes y$,
        \item  $w_1$ matches with $v_1$ and $w_2$ matches with $v_3$ contributing the term $n \otimes y \otimes n$, and
        \item $w_1$ matches with $v_2$ and $w_2$ matches with $v_3$ contributing the term $y \otimes n \otimes n$. 
    \end{itemize}
\end{proof}

\begin{remark}\label{rem:2}
    The expression \eqref{eq:1} is same as the product $(y \otimes n + n \otimes y)\cdot (y \otimes n + n \otimes y)$ (that is, multiplying two copies of the expression \eqref{eq:0}) where we define \[(\epsilon_1 \otimes \epsilon_2)\cdot (\epsilon_3 \otimes \epsilon_4) := \epsilon_1 \otimes (\epsilon_2\cdot  \epsilon_3) \otimes \epsilon_4.\]
If we perform the multiplication, then by distributive law we get \[ (y \otimes n + n \otimes y)\cdot (y \otimes n + n \otimes y)= y \otimes (n\cdot y) \otimes n+ y \otimes (n\cdot n) \otimes y + n \otimes (y\cdot y) \otimes n + n \otimes (y\cdot n) \otimes y.\]
This is the same expression as \eqref{eq:1}, as the middle term is $0$ since $n\cdot n=0$ in $\mathcal{M}$.
\end{remark}

The calculation in Remark \ref{rem:2} is not a coincidence. The next lemma explains the phenomenon. Before we state it properly, we set up few notations. Let $(G_1; v_1, \ldots, v_{i+j})$ and $(G_2; w_1,  \ldots, w_{j+k})$ be two graphs with $ \left(i+j \right)$ and $ \left( j+k \right)$ distinguished vertices respectively. Assume $v_{G_1}$ and $v_{G_2}$ denote the state sum decomposition of $G_1$ and $G_2$ respectively. We construct a new graph $G$ with $ \left(i+j+k \right)$ distinguished vertices defined as follows: 
\begin{equation}\label{eq:cc}
G := G_1 \cupdot G_2 / v_{i+1} \sim w_1, \cdots , v_{i+j} \sim w_j.
\end{equation}

We want to express $v_{G}$ in terms of $v_{G_{1}}$ and $v_{G_{2}}$. To understand it concretely, we need the notion of internal multiplication. Let $I \subset [n]$ and $J \subset [m]$ with $|I|=|J|$ where we denote the set $\{1,2,\ldots, k\}$ by $[k]$. We can define an \textit{internal multiplication} $\phi_{I,J}$ as follows: 
\[ \phi_{I,J}: \mathcal{M}^{\otimes n} \otimes \mathcal{M}^{\otimes m} \to \mathcal{M}^{\otimes (n+m- |I|)},\] 
where the internal multiplication occurs only on the coordinates of $I$ and $J$. More explicitly, if \[|I|=|J|=k, \quad I = \{ i_1 < \cdots < i_k \}, \quad \text{and} \quad J= \{ j_1 < \cdots < j_k \}.\] Take two simple tensors $ \left( A \cdot \epsilon_{1} \otimes \cdots \otimes \epsilon_{n} \right) \in \mathcal{M}^{\otimes n}$ and $\left( B \cdot \eta_{1} \otimes \cdots \otimes \eta_{m}\right) \in \mathcal{M}^{\otimes m}$, where $\epsilon_i, \eta_{j} \in \{y, n\}$. Then,
\begin{multline*}
    \phi_{I,J} \left( (A \cdot \epsilon_1 \otimes \cdots \otimes \epsilon_n) \otimes (B \cdot \eta_1 \otimes \cdots \otimes \eta_{m}) \right)\\:= \left( A \cdot B \right) \epsilon_1 \otimes \cdots \otimes \epsilon_{i_1-1}\otimes (\epsilon_{i_1} \cdot \eta_{j_1}) \otimes \cdots \otimes (\epsilon_{i_{k}} \cdot \eta_{j_{k}}) \otimes \cdots \otimes \epsilon_{n} \otimes \eta_1 \otimes \cdots \otimes \widehat{\eta_{j_1}} \otimes \cdots \otimes \widehat{\eta_{j_k}} \otimes \cdots \otimes \eta_{m},
\end{multline*}

\noindent where~~$\widehat{\cdot}$~~denotes the absence of the variable. For general elements $W \in \mathcal{M}^{\otimes n}$ and $V \in \mathcal{M}^{\otimes m}$ , we extend the definition of internal multiplication 
\(\phi_{I,J} ( W \otimes W ) \) by distributivity. We give an example of the internal multiplication:  for $n=3 , m=4$ and  for $I= \{ 2,3 \},  J=\{ 1,4 \}$:
\[ \phi_{I,J} \left( ( \epsilon_1 \otimes \epsilon_2 \otimes \epsilon_3) \otimes (\eta_1 \otimes \eta_2 \otimes \eta_3 \otimes \eta_4) \right) = \epsilon_1 \otimes (\epsilon_2 \cdot \eta_1) \otimes (\epsilon_3 \cdot \eta_4) \otimes \eta_2 \otimes \eta_3. \]
If $I,J$ are well understood we shall remove them from $\phi$.

\begin{lemma}[The Patching Lemma]\label{lem3}
For $I=\{i+1, \ldots, i+j \} \subset [i+j]$ and $J= \{1, \ldots, j \} \subset [j+k]$, the state sum decomposition of the graph $G$, where $G= (V,E)$ with $n$ distinguished vertices $\{v_1, \ldots, v_n \} \subset V$ is given by  \[v_{G}= \phi_{I,J} (v_{G_1} \otimes v_{G_2}),\] where $G_1$ and $G_2$ are as described in \eqref{eq:cc}.  
\end{lemma}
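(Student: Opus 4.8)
The plan is to prove the Patching Lemma by a direct bijective/counting argument that mirrors the proof of Theorem~\ref{thm:main}, interpreting both sides combinatorially. First I would unwind the definition of the left-hand side: a coefficient $G(\epsilon_1,\ldots,\epsilon_{i+k})$ in $v_G$ counts perfect matchings of the subgraph of $G$ obtained by deleting those distinguished vertices $v_\ell$ (for $\ell$ ranging over the $i$ vertices inherited from $G_1$ and the $k$ vertices inherited from $G_2$) whose label is $y$. Since $G$ is itself a connected sum $G_1 \cupdot G_2 / (v_{i+1}\sim w_1, \ldots, v_{i+j}\sim w_j)$ along the $j$ identified vertices, I would apply exactly the reasoning of Theorem~\ref{thm:main} to this internal connected sum: a perfect matching $X$ of the relevant subgraph of $G$ matches each identified vertex $v_{i+m}=w_m$ either into the $G_1$-side or the $G_2$-side, and this forces the state of that vertex (seen from $G_1$) and its state (seen from $G_2$) to be complementary, i.e.\ $\{\epsilon_{i+m}^{(1)},\eta_m^{(2)}\}=\{y,n\}$.

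Next I would match this up with the right-hand side $\phi_{I,J}(v_{G_1}\otimes v_{G_2})$. Expanding by distributivity, a simple tensor in $v_{G_1}\otimes v_{G_2}$ is $(G_1(\epsilon_1,\ldots,\epsilon_{i+j})\,\epsilon_1\otimes\cdots\otimes\epsilon_{i+j}) \otimes (G_2(\eta_1,\ldots,\eta_{j+k})\,\eta_1\otimes\cdots\otimes\eta_{j+k})$, and $\phi_{I,J}$ multiplies the paired slots $\epsilon_{i+m}\cdot\eta_m$ in $\mathcal{M}$, multiplies the integer coefficients, and deletes the $\eta_m$ slots. The crucial observation is that the relations of $\mathcal{M}$ do exactly the bookkeeping Theorem~\ref{thm:main} demands: $n\cdot n = 0$ kills the illegal configurations where an identified vertex is matched on neither side (so it would be over-deleted), $y\cdot y = y$ kills the configurations where it is claimed by both sides simultaneously while recording a surviving $y$ when the vertex is genuinely still free for an external match, and $y\cdot n = n\cdot y = n$ records the legitimate case, with the resulting slot equal to $n$ precisely because the identified vertex is now internally matched and hence unavailable externally. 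So after reduction in $\mathcal{M}$, the surviving simple tensors on the right are indexed exactly by choices of complementary labels on the $j$ internal slots together with free labels $\epsilon_1,\ldots,\epsilon_i,\eta_{j+1},\ldots,\eta_{j+k}$ on the external slots, and each carries coefficient $\sum G_1(\epsilon_1,\ldots,\epsilon_i,\delta_1,\ldots,\delta_j)\,G_2(\bar\delta_1,\ldots,\bar\delta_j,\eta_{j+1},\ldots,\eta_{j+k})$ summed over the internal pattern $\delta$.

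I would then finish by identifying that coefficient with $G(\epsilon_1,\ldots,\epsilon_i,\eta_{j+1},\ldots,\eta_{j+k})$: this is just Theorem~\ref{thm:main} applied to the connected sum \eqref{eq:cc} restricted to whatever subgraph is specified by the external labels (delete the external distinguished vertices labelled $y$ first, then count matchings of the resulting connected sum, splitting over how the $j$ internal identified vertices get matched). One bookkeeping point that needs care is the ordering of tensor factors: $\phi_{I,J}$ places the multiplied slots in the positions of $I$ and appends the leftover $\eta$'s at the end, so to literally get $v_G$ with its distinguished vertices in the order $v_1,\ldots,v_i$ then (identified) then $w_{j+1},\ldots,w_{j+k}$ one must either fix that convention for $G$ or compose with an obvious permutation of tensor slots; I would state this convention explicitly at the start so the equality holds on the nose.

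The main obstacle I expect is not conceptual but notational: keeping the three index blocks ($i$ slots from $G_1$, $j$ identified slots, $k$ slots from $G_2$) straight through the definition of $\phi_{I,J}$, and being scrupulous that every configuration killed by an $\mathcal{M}$-relation really does correspond to a non-matching (or an over-counted) configuration of $G$ and vice versa — i.e.\ that the algebra relations of $\mathcal{M}$ are not merely consistent with but exactly equivalent to the combinatorial constraint "each identified vertex is matched on exactly one side." Once that equivalence is spelled out, the proof is a one-to-one correspondence of terms, exactly as in the proof of Theorem~\ref{thm:main}.
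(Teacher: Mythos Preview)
Your overall strategy --- analyze what happens at each identified vertex and match the possible configurations against the relations of $\mathcal{M}$ --- is exactly the paper's approach. But there are two errors in your execution that need to be fixed before the argument goes through.

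First, you have misread the setup: the graph $G$ carries $i+j+k$ distinguished vertices, not $i+k$. The $j$ identified vertices $v_{i+1}\sim w_1,\ldots,v_{i+j}\sim w_j$ \emph{remain} distinguished in $G$, and indeed $v_G \in \mathcal{M}^{\otimes(i+j+k)}$, which matches the codomain of $\phi_{I,J}$. Consequently an identified vertex need not be matched at all in the subgraph of $G$ under consideration: it can carry external label $y$ (deleted), and this is precisely what the relation $y^2=y$ records. Your plan, which forces every identified vertex to be matched on exactly one side and sums only over complementary internal patterns $\delta,\bar\delta$, would prove a different statement about a graph with only $i+k$ distinguished vertices; the coefficient you identify as $G(\epsilon_1,\ldots,\epsilon_i,\eta_{j+1},\ldots,\eta_{j+k})$ is missing the $j$ middle arguments.

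Second, and relatedly, you have swapped the interpretations of $n^2=0$ and $y^2=y$. In the paper's convention $\epsilon=n$ means the vertex is kept and matched internally, while $\epsilon=y$ means it is deleted (unmatched, available). So $n\cdot n$ is the case where the identified vertex is matched in \emph{both} $G_1$ and $G_2$ --- impossible in a matching of $G$, whence $n^2=0$. And $y\cdot y$ is the case where it is deleted on both sides --- legitimate, with the vertex surviving as a free distinguished vertex of $G$ labelled $y$, whence $y^2=y$. Your sentence ``$n\cdot n=0$ kills the illegal configurations where an identified vertex is matched on neither side'' has these backwards.

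Once you correct these two points, your argument becomes the paper's proof: the four products $ny$, $yn$, $nn$, $yy$ encode exactly the four cases (matched in $G_1$ only; matched in $G_2$ only; matched in both; matched in neither), and the relations of $\mathcal{M}$ do the bookkeeping.
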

\begin{proof}
In a connected sum between two graph $G_1$ and $G_2$, if $v \in G_1$ gets identified with $w \in G_2$, then in the disjoint union of $G_1$ and $G_2$ there could be four possible situations in a matching $M$:
\begin{itemize}
    \item \textbf{Case $1$}: $v$ is matched by an edge in $G_1$ and $w$ does not get matched,
    \item \textbf{Case $2$}: $v$ does not get matched and $w$ is matched by an edge in $G_2$,
    \item \textbf{Case $3$}: both $v$ and $w$ are matched in $G_1$ and $G_2$ respectively, and
    \item \textbf{Case $4$}: neither of $v$ and $w$ are matched.
\end{itemize}

In the connected sum of $G_1 \# G_2$, the situations in case $1$ and $2$ are reflected by the multiplication \[yn=ny=n\] in the matching algebra $\mathcal{M}$ along the respective coordinates of $v_{G_1}$ and $v_{G_2}$. However, case $3$ can never arise in a matching of $G_1 \# G_2$ which is reflected in the algebra multiplication $n^2=0$. Lastly, if neither $v$ and $w$ get picked up in the matching $M$, they shall stay alive in $G_1 \# G_2$ which is reflected in the algebra multiplication $y^{2}=y$.     
\end{proof}

\section{Aztec Diamonds \& Alternating Sign Matrices}\label{sec:ad}

In 1992, Elkies, Kuperberg, Larsen, and Propp \cite{AD1, AD2} introduced a new class of objects which they called Aztec diamonds. The Aztec diamond of order $n$ (denoted by $\ad(n)$) is the union 
of all unit squares inside the contour $\abs{x}+\abs{y}=n+1$ (see the top left of Figure \ref{fig:ad} for an Aztec diamond of order $4$; at this moment we ignore the other figures). They considered the problem of counting the number of domino tilings of an Aztec diamond of order $n$ (see the top right of Figure \ref{fig:ad} for an example of such a tiling for $n=4$) and proved that this number is equal to $2^{n(n+1)/2}$. They gave four different proofs of this result, henceforth referred to as the Aztec Diamond Theorem.
\begin{theorem}[Aztec Diamond Theorem, \cite{AD1, AD2}]\label{adm}
The number of domino tilings of the Aztec diamond of order $n$ is $2^{n(n+1)/2}$.
\end{theorem}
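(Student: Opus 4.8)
The plan is to realize domino tilings of the Aztec diamond $\ad(n)$ as perfect matchings of a planar graph, and then to peel the diamond apart into simpler pieces using the connected sum construction and the Patching Lemma (Lemma~\ref{lem3}), so that the count $M(\ad(n))$ satisfies a recursion whose solution is $2^{n(n+1)/2}$. First I would recall that domino tilings of $\ad(n)$ are in bijection with perfect matchings of the Aztec diamond graph $G_n$ — the planar bipartite graph whose vertices are the unit squares of $\ad(n)$ and whose edges join squares sharing an edge — so that $M(G_n)$ is exactly the quantity in Theorem~\ref{adm}. The target identity $M(G_n) = 2^n M(G_{n-1})$ would then immediately give the result by induction, with base case $M(G_1) = 2$ (the $2\times 2$ block, which is $\ad(1)$, has two tilings).

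To obtain the recursion, I would slice off the outer ``ring'' of $\ad(n)$: write $G_n = A_n \# G_{n-1}$, where $A_n$ is the subgraph coming from the boundary layer of squares and the connected sum identifies the $2n$-or-so vertices along the inner contour $|x|+|y| = n$ that $A_n$ shares with $G_{n-1}$. By Theorem~\ref{thm:main},
\[
M(G_n) = \sum_{\epsilon} A_n(\epsilon_1,\ldots,\epsilon_m)\, G_{n-1}(\bar\epsilon_1,\ldots,\bar\epsilon_m),
\]
so the whole computation reduces to understanding the state sum expansion $v_{A_n} \in \mathcal{M}^{\otimes m}$ of the boundary ring. Here I would exploit the Patching Lemma: the ring $A_n$ itself decomposes into small, explicitly analyzable gadgets — paths and corner pieces of the type already treated in Lemmas~\ref{lem1} and~\ref{lem2} — glued end to end via internal multiplication $\phi_{I,J}$. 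Computing $v_{A_n}$ as an iterated internal product of these gadget tensors, one should find that each of the $2^n$ ``ways the ring interacts with the interior'' contributes a factor of $2$ (morally: the ring is forced except for a binary choice at each of the $n$ corners, or equivalently the ring breaks into $n$ independent two-way choices once the parity along the boundary is fixed), so that $\sum_\epsilon A_n(\epsilon)\,G_{n-1}(\bar\epsilon) = 2^n\, G_{n-1}(n,\ldots,n) = 2^n M(G_{n-1})$.

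The main obstacle will be the bookkeeping in the previous step: making precise exactly which vertices are distinguished, ordering them consistently around the contour, and verifying that the state sum $v_{A_n}$ of the boundary ring really collapses — after multiplying by $v_{G_{n-1}}$ via the involution-twisted pairing — to $2^n$ times the all-$n$ coefficient of $v_{G_{n-1}}$, rather than producing cross terms that mix different defect patterns of $G_{n-1}$. Concretely, one must check that the only $\epsilon$ for which $A_n(\epsilon) \neq 0$ and $G_{n-1}(\bar\epsilon) \neq 0$ simultaneously are those with $\bar\epsilon = (n,\ldots,n)$, i.e. $\epsilon = (y,\ldots,y)$ up to the forced corner choices — this is where the geometry of the Aztec diamond (the fact that removing the full outer ring leaves precisely $\ad(n-1)$, and that the ring admits no internal slack) does the real work. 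Once that vanishing is established, the algebra in $\mathcal{M}$ (the relations $y^2=y$, $n^2=0$, $yn=ny=n$) handles the rest mechanically, and counting the surviving monomials gives the factor $2^n$. I would also double-check the edge/corner gadget computations against the small cases $n=1,2$ to make sure the indexing conventions are right before running the induction.
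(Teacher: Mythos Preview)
Your proposed decomposition has a genuine gap: the crucial vanishing claim is false. You assert that in the pairing
\[
M(G_n)=\sum_{\epsilon} A_n(\epsilon)\,G_{n-1}(\bar\epsilon)
\]
the only surviving term should be $\bar\epsilon=(n,\ldots,n)$, with $A_n(y,\ldots,y)=2^n$. Already for $n=2$ this fails. Taking $\ad(1)$ to be the central $2\times2$ block of $\ad(2)$ with all four vertices distinguished, and $A_2$ to be the eight outer squares together with the eight ring--to--interior edges, one computes directly that $A_2(y,y,y,y)=1$ (the ring with all interface vertices deleted is four disjoint edges, admitting a unique matching), not $4$. Moreover $G_1(\bar\epsilon)\neq0$ for \emph{six} patterns $\bar\epsilon$, and each of them pairs nontrivially with $A_2$: the full sum is $1\cdot2+1\cdot1+1\cdot1+1\cdot1+1\cdot1+2\cdot1=8$, with no single term carrying the factor $2^n$. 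The ``ring has no internal slack'' intuition is wrong --- in fact $A_2$ with all four interface vertices present is a $12$-cycle, which already has two matchings, and the interior $\ad(n-1)$ with various boundary defects has many matchings as well. So the ring decomposition does not produce a clean recursion; the cross terms are essential and do not cancel.

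The paper avoids this by slicing $\ad(n)$ into \emph{horizontal strips} rather than concentric rings. Each strip is a chain of $X$-shaped gadgets capped at the ends (the graphs $G_k$ of Figure~\ref{fig:pp8-1}), whose state sum is the explicit expression $X(X\otimes Y+Y\otimes X)^kX$ in the auxiliary variables $X,Y,Z$. Lemmas~\ref{lem5}--\ref{lem11} show that the monomials appearing in this expansion, under the substitution $X\leftrightarrow0$, $Z\leftrightarrow1$, $Y\leftrightarrow-1$, are exactly the rows of an alternating sign matrix, each weighted by $2^{\#(Z)}$; Lemma~\ref{lem12} then shows that the column-wise compatibility conditions needed to assemble the strips into a perfect matching of $\ad(n)$ impose precisely the ASM column constraints. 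This yields $M(\ad(n))=\sum_{A\in\mathcal A_n}2^{N_+(A)}$, and only \emph{then} does the identity $N_+(A)=N_-(A)+n$ give the recursion $M(\ad(n))=2^n M(\ad(n-1))$. In other words, the factor $2^n$ is not visible at the level of a single geometric layer; it emerges from the global ASM structure, which is exactly what your ring approach bypasses.
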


\begin{figure}[!htb]
\centering
\includegraphics[scale=.99]{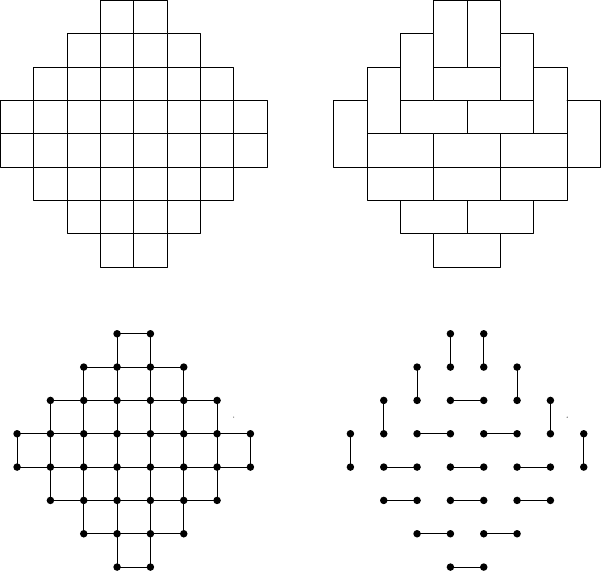}
\caption{Top row: the Aztec Diamond $\ad(4)$ (left) and a tiling of $\ad(4)$ (right); bottom row: the planar dual graph of $\ad(4)$ (left) and the perfect matching of the planar dual graph of $\ad(4)$ corresponding to the tiling directly above it (right).}
\label{fig:ad}
\end{figure}

We can recast the problem of enumerating domino tilings into a perfect matching problem. A perfect matching of a graph is a matching in which every vertex of the graph is incident to exactly one edge of the matching. It is easy to see that domino tilings of a region can be identified with perfect matchings of its planar dual graph, the graph that is obtained if we identify each unit square with a vertex and unit squares sharing an edge is identified with an edge. See the bottom left of Figure \ref{fig:ad} for the planar dual graph of an Aztec Diamond of order $4$, and the bottom right of Figure \ref{fig:ad} for the perfect matching corresponding to the tiling shown in the top right of Figure \ref{fig:ad}.

There are several techniques that combinatorialists have used over the years to calculate the number of perfect matchings of planar dual graphs of regions whose tiling enumeration number is required. Among these the technique found by Kuo \cite{kuo}, and later generalized by Ciucu \cite{ck} and the second author \cite{Saikia} is widely used. Previous proofs of Theorem \ref{adm} include the original proofs by Elkies, Kuperberg, Larsen and Propp \cite{AD1, AD2}; combinatorial proofs by Eu and Fu \cite{eufu}, Bosio and Van Leeuwen \cite{bosio}, and by Fendler and Grieser \cite{fendler}. In the next two sections we will present a proof of Theorem \ref{adm} which uses Theorem \ref{thm:main}, and the ideas of our matching algebra.

 The way we accomplish this goal is to exploit a connection between domino tilings of Aztec Diamonds with another class of combinatorial objects, called \textit{alternating sign matrices} (ASMs). An ASM of order $n$ is an $n\times n$ matrix where the entries come from the set $\{0,1,-1\}$, the row and column sums all equal $1$ and the non-zero entries alternate in sign along each row and column. An example of an ASM of order $7$ is

\[
\begin{pmatrix}
0&0&0&1&0&0&0\\0&1&0&-1&0&1&0\\1&-1&0&1&0&-1&1\\0&0&1&-1&1&0&0\\0&1&-1&1&-1&1&0\\0&0&1&-1&1&0&0\\0&0&0&1&0&0&0
\end{pmatrix}.
\] These matrices, first introduced by Robbins and Rumsey, in the 1980s, have given rise to a lot of nice enumerative conjectures and results. We refer the reader to the book by Bressoud \cite{bressoud} and the papers by Fischer \& Saikia \cite{FischerSaikia} and Behrend, Fischer \& Koutschan \cite{BehrendFischerKoutschan} for surveys of known and conjectured results related to ASMs.

If we rotate the planar dual graph of an Aztec Diamond by $45^\circ$, we see that this graph is made of $n$ rows of $n$ diamond shaped cells. If we assign an entry $1$, $0$ or $-1$ to each such cell in the perfect matching where a cell is covered by $2$, $1$ or $0$ edge(s) then we get a correspondence between a perfect matching and an ASM. This is illustrated in Figure \ref{fig:asm-domino} for the matrix
$$
\begin{pmatrix}
0&1&0\\
1&-1&1\\
0&1&0
\end{pmatrix}.
$$
Here the heavier lines are the edges that count towards the perfect matching.

\begin{figure}[!htb]
\begin{center}
    \scalebox{0.9}{\includegraphics{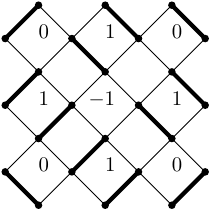}}
\caption{ASMs and Aztec diamonds.}
\label{fig:asm-domino}
\end{center}
\end{figure}

This gives us the following result which connects domino tilings of Aztec diamonds and enumeration of ASMs (see the work of Elkies, Kuperberg, Larson and Propp \cite{AD1,AD2}, as well as of Mihai Ciucu \cite{CiucuSymmetry}).
\begin{theorem}\label{thm:2-enum}
    The number of domino tilings of an Aztec Diamond of order $n$ is given by \[
    \sum_{A\in \mathcal{A}_{n}}2^{N_{+}(A)}=\sum_{A\in \mathcal{A}_{n+1}}2^{N_{-}(A)},\] where $\mathcal{A}_n$ is the set of all $n\times n$ ASMs and $N_{\pm}(A)$ is the number of $\pm 1$'s in $A$.
\end{theorem}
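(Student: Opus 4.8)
The plan is to realise the two displayed expressions as two different bookkeeping schemes for one and the same quantity. Concretely, I would construct, for each $n$, a surjection from the domino tilings of $\ad(n)$ onto $\mathcal{A}_n$ whose fibre over a given ASM $A$ has size exactly $2^{N_+(A)}$, and independently a surjection onto $\mathcal{A}_{n+1}$ whose fibre over $A$ has size $2^{N_-(A)}$; summing fibre sizes over the base then yields $M(\ad(n))=\sum_{A\in\mathcal{A}_n}2^{N_+(A)}$ and $M(\ad(n))=\sum_{A\in\mathcal{A}_{n+1}}2^{N_-(A)}$, which is the assertion.

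First, the $n\times n$ scheme. Given a tiling $T$ of $\ad(n)$, pass to the corresponding perfect matching of the planar dual graph $D_n$ and rotate so that $D_n$ displays as $n$ rows of $n$ diamond-shaped $4$-cycles, the cells. Let $\Psi(T)$ be the array whose $(i,j)$-entry records the number of matching edges lying on the $(i,j)$-th cell, diminished by $1$, so the entry is $+1,0,-1$ according as the cell carries $2,1,0$ matching edges. Two things must be checked: that $\Psi(T)$ lies in $\mathcal{A}_n$, and that $\abs{\Psi^{-1}(A)}=2^{N_+(A)}$. For the ASM axioms I would track, along each row (and each column) of cells, a \emph{flux} variable counting the matching edges running between one cell of the row and the next; the boundary of $\ad(n)$ pins the flux to prescribed values at the two ends of the row, and — suitably normalised — the flux takes only the values $0$ and $1$ throughout, while the $(i,j)$-entry of the array is precisely the jump of the flux across the $(i,j)$-th cell. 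Hence each entry is $0$ or $\pm 1$, consecutive nonzero entries along a line alternate in sign, and each line sum telescopes to $1$. Equivalently, one may observe that the height function of $T$, evaluated on a suitable sublattice, recovers the corner-sum matrix of an ASM — the cleanest formulation.

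Second, the fibre count. Fix $A\in\mathcal{A}_n$. A cell with entry $-1$ is empty and a cell with entry $0$ carries a single matching edge; a cell with entry $+1$ carries two matching edges, which on a $4$-cycle are forced to be a pair of opposite edges, so such a cell admits exactly two local completions. The point is that, once $A$ is fixed, all matching edges outside the $+1$-cells are already determined, whereas the $+1$-cells may then be completed freely and independently; I would establish this by propagating the forced edges inward from the boundary of $\ad(n)$, or by induction on $n$ after peeling off the outermost layer. This gives $\abs{\Psi^{-1}(A)}=2^{N_+(A)}$ and hence the first equality. The simultaneous verification of the ASM axioms and of this forcing statement is the step I expect to be the main obstacle, and the height-function dictionary is what makes both tractable.

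For the second equality I would run the same argument with the complementary family of cells of $D_n$ — the $4$-cycles interstitial to the previous ones, together with the degenerate cells along the boundary — which organise into an $(n+1)\times(n+1)$ array $A'$. With the opposite sign convention, so that now it is the $-1$-cells that carry two matching edges and hence two local completions (the $0$- and $+1$-cells being forced), the identical flux and forcing arguments show $A'\in\mathcal{A}_{n+1}$ and $\abs{(\Psi')^{-1}(A')}=2^{N_-(A')}$, whence $M(\ad(n))=\sum_{A'\in\mathcal{A}_{n+1}}2^{N_-(A')}$. Equating the two counts of $M(\ad(n))$ completes the proof; alternatively, since this statement is classical, one may treat the $n\times n$ correspondence as the substantive content and read off the $(n+1)$-version from the known compatibility between the two ASMs attached to a single tiling.
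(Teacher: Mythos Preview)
Your proposal is the classical Elkies--Kuperberg--Larsen--Propp argument: superimpose the matching on the $n\times n$ (respectively $(n+1)\times(n+1)$) array of diamond cells, read off $2,1,0$ matched edges as $+1,0,-1$ (respectively $-1,0,+1$), verify the ASM axioms by a flux or height-function argument, and count fibres by noting that only the doubly-covered cells admit a binary choice. This is correct and is in fact exactly the correspondence the paper sketches just before stating the theorem.

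However, the paper's own proof---the content of Sections~\ref{sec:ad1} and~\ref{sec:ad2}---proceeds along an entirely different route. The Aztec diamond is decomposed row by row as an iterated connected sum of $X$-shaped graphs capped at the ends, and the Patching Lemma reduces the state sum of each row to the algebraic expression $X(X\otimes Y + Y\otimes X)^n X$ (for the $N_+$ version) or $Y(X\otimes Y + Y\otimes X)^n Y$ (for the $N_-$ version), where $X,Y,Z$ are certain two-component formal vectors satisfying $X^2=2Z$, $Y^2=Y$, $XY=X$, $XZ=0$. A sequence of lemmas then shows that the monomials arising in these expansions are precisely the strings in $\{X,Y,Z\}$ obeying the ASM row constraints under $X\leftrightarrow 0$, $Z\leftrightarrow 1$, $Y\leftrightarrow -1$ (or the swapped dictionary), with coefficient $2^{\#Z}$; a companion lemma on the column side then extracts $M(\ad(n))$ as the desired $2$-enumeration. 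Your approach is geometrically transparent and exhibits the tiling-to-ASM map explicitly; the paper's approach trades this for an algebraic mechanism in which the alternating-sign condition and the weight $2^{N_\pm}$ are not imposed but \emph{emerge} from the relations in $\mathcal{M}$ (in particular, the factor of $2$ is produced automatically by $X^2=2Z$), and which is designed to be portable to other regions via the connected-sum formalism of Theorem~\ref{thm:main}.
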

The sum above is also called $2$-enumeration of ASMs. It is not difficult to arrive at Theorem \ref{adm} from Theorem \ref{thm:2-enum} by observing that any ASM of order $n$ has $n$ more $1$'s than $-1$'s, which implies the following recurrence relation
\[
M(AD(n))=2^{n}M(AD(n-1)), \quad n\geq 2.
\]
From this and the starting values of $n$ we arrive at Theorem \ref{adm}.
\begin{remark}
    Initially the connection of Aztec diamonds and alternating sign matrices was established through an intermediate object called \textit{height function} (see. \cite{AD1}). This mysterious height function also appeared in the work of Thurston (see. \cite{Thurston}) in the context of tiling theory. Our construction does not rely on this intermediate object.  
\end{remark}

\section{Matching Algebra and the Aztec Diamond Theorem}\label{sec:ad1}

In this section we use the theories developed in Section \ref{sec:proof} to provide an algebraic explanation of the connection between domino tilings of an Aztec Diamond theorem with the $2$-enumeration of ASMs as mentioned in Theorem \ref{thm:2-enum}. The section will culminate in an independent proof of Theorem \ref{thm:2-enum} which will in turn give a proof of Theorem \ref{adm}.

\begin{lemma}\label{lem44}
The state sum decomposition of the $X$ shaped graph shown in Figure \ref{fig:lem4} with the distinguished vertices marked with the highlighted red dots, is given by \[n \otimes y \otimes y \otimes y + y \otimes n \otimes y \otimes y + y \otimes y \otimes n \otimes y + y \otimes y \otimes y \otimes n .\]
\end{lemma}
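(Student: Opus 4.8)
The plan is to argue exactly as in the proofs of Lemma~\ref{lem1} and Lemma~\ref{lem2}: name the unique non-distinguished vertex, record which edges exist, and run a short case analysis over the $2^{4}$ values of $(\epsilon_1,\epsilon_2,\epsilon_3,\epsilon_4)$. The $X$-shaped graph is the five-vertex star drawn as two crossing segments: a single central vertex $w$ joined to the four red vertices $v_1,v_2,v_3,v_4$, with no other edges. For a tuple $(\epsilon_1,\ldots,\epsilon_4)\in\{y,n\}^{4}$ write $G'$ for the subgraph in which $v_i$ is removed exactly when $\epsilon_i=y$; by the definition of the state sum expansion, $G(\epsilon_1,\ldots,\epsilon_4)=M(G')$ and $v_G=\sum_{\epsilon_i\in\{y,n\}}G(\epsilon_1,\ldots,\epsilon_4)\,\epsilon_1\otimes\cdots\otimes\epsilon_4$.

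The key observation is that in $G'$ the centre $w$ survives and its only neighbours are the surviving leaves, namely those $v_i$ with $\epsilon_i=n$. In any perfect matching of $G'$ the vertex $w$ must be matched, hence along some edge $w v_i$ with $\epsilon_i=n$; but then every other surviving leaf $v_j$ has lost its only neighbour and is left unmatched. Thus $G'$ admits a perfect matching precisely when exactly one coordinate $\epsilon_i$ equals $n$, and in that case the matching is the single edge $w v_i$, so $M(G')=1$; in every other case $M(G')=0$ (in particular $G(n,n,n,n)=M(G)=0$, since a star on an odd number of vertices has no perfect matching). Substituting these values into the state sum expansion leaves exactly the four advertised terms $n\otimes y\otimes y\otimes y + y\otimes n\otimes y\otimes y + y\otimes y\otimes n\otimes y + y\otimes y\otimes y\otimes n$.

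I do not expect a genuine obstacle here; the single point worth spelling out is the \emph{leftover-leaf} step above — that two or more surviving leaves genuinely obstruct a perfect matching — since it is this observation that forces all but four of the sixteen coefficients to vanish. As an alternative, one could instead deduce the formula from the Patching Lemma (Lemma~\ref{lem3}): the star is obtained by identifying the middle vertices of two copies of the three-vertex graph of Lemma~\ref{lem1}, so $v_G$ can be read off from $\phi_{I,J}(v_{G_1}\otimes v_{G_2})$ after collapsing the product of the two ``middle'' coordinates via $y\cdot y=y$, $y\cdot n=n\cdot y=n$, $n\cdot n=0$; this route, however, requires first recording the state sum of the path with its middle vertex also distinguished and then discarding that coordinate (equivalently, retaining only the summands in which the glued centre takes the value $n$), so the direct case analysis is the more economical argument.
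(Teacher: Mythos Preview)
Your proof is correct and follows essentially the same approach as the paper: the paper's argument is simply that the unique non-distinguished centre vertex must be paired with one of the four leaves, after which the remaining three leaves are free, yielding the four terms. Your write-up is more detailed (spelling out the leftover-leaf obstruction and the $2^4$ case structure), but the underlying idea is identical.
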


\begin{figure}[!htb]
    \centering
    \includegraphics[width=0.13\linewidth]{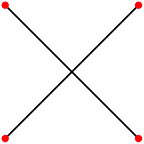}
    \caption{The $X$ shaped graph in Lemma \ref{lem44}.}
    \label{fig:lem4}
\end{figure}

\begin{proof}
    As the graph has one extra vertex that does not participate in the connected sum operation. This vertex must be paired with one of the other four remaining vertices. Once it is paired, the rest of the three vertices must be free. Thus we have the expression.
\end{proof}
For ease in calculations, in the case above we represent the state sum decomposition as the formal sum of matrices. That is,
\[ v_{G} = \begin{bmatrix}
    n & y \\
    y & y \\ 
\end{bmatrix} + 
\begin{bmatrix}
    y & n \\
    y & y \\ 
\end{bmatrix} + 
\begin{bmatrix}
    y & y \\
    y & n 
\end{bmatrix} + 
\begin{bmatrix}
    y & y \\
    n & y 
\end{bmatrix}.
\]
Note that we can not add these matrices like in linear algebra. These matrices are the formal representation of the tensor sum described above where the position of the entry keeps track of the position of the distinguished vertex. We introduce the following set of notations, which we use henceforth in this paper:
\[ X= \begin{bmatrix}
    n \\
    y
\end{bmatrix} + \begin{bmatrix}
    y \\
    n
\end{bmatrix} , \quad Y = \begin{bmatrix}
    y \\
    y
\end{bmatrix}, \quad Z = \begin{bmatrix}
    n \\
    n
\end{bmatrix}. \]
With this new set of notations, we see from above \[v_{G}= X \otimes Y + Y \otimes X  .\] The coordinate-wise multiplication on $X, Y$ and $Z$ satisfy the following set of relations:
\[ X^{2}= 2Z, \quad Y^{2}=Y, \quad XY= YX = X , \quad XZ= ZX=0. \]

\begin{figure}[!htb]
\begin{center}
    \scalebox{0.85}{\includegraphics{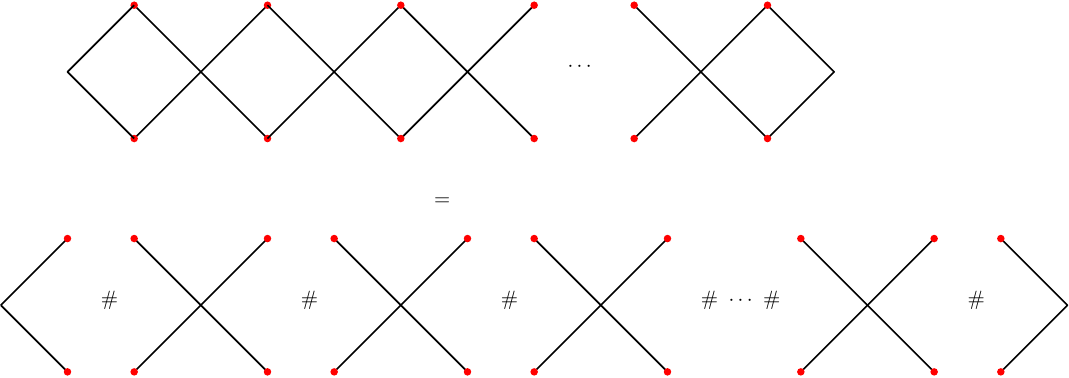}}
\caption{The graph $G_n$.}
\label{fig:pp8-1}
\end{center}
\end{figure}

In the next lemma we compute the state sum decomposition of the graph $G_{n}$ shown in Figure \ref{fig:pp8-1}. $G_n$ is the connected sum of $n$ different $X$'s and is capped of by the graph in Lemma \ref{lem1} on both the corners. This means, we need to calculate the expression \[ X (X \otimes Y + Y \otimes X) ^{n} X.\] This can now be achieved via repeated applications of Lemma \ref{lem3}. For convenience we include few calculations for small values of $n$ below.

\begin{figure}[!htb]
\begin{center}
    \scalebox{0.85}{\includegraphics{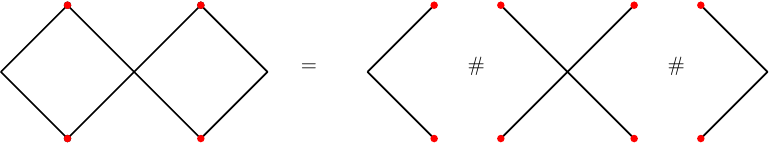}}
\caption{The graph $G_1$.}
\label{fig:n1}
\end{center}
\end{figure}

\noindent For $n=1:$ (see Figure \ref{fig:n1})
\begin{align*}
    X (X \otimes Y + Y \otimes X)X &= (X^{2} \otimes Y + XY \otimes X)X  \\ 
                                   &= X^{2} \otimes YX + XY \otimes X^2 \\&= 2Z \otimes X + 2X \otimes Z.
\end{align*}

\begin{figure}[!htb]
\begin{center}
    \scalebox{0.85}{\includegraphics{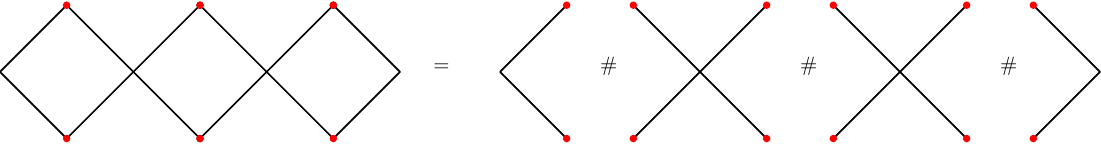}}
\caption{The graph $G_2$.}
\label{fig:n2}
\end{center}
\end{figure}

\noindent For $n=2:$ (see Figure \ref{fig:n2})
\begin{align*}
    X (X \otimes Y + Y \otimes X) ^{2} X &= X (X \otimes Y + Y \otimes X)(X \otimes Y + Y \otimes X)X \\ 
     &=  X (X \otimes XY \otimes Y + X \otimes Y^{2} \otimes X + Y \otimes X^2 \otimes Y + Y \otimes XY \otimes X) X \\ 
     &= X^{2} \otimes X \otimes YX + X^{2} \otimes Y^{2} \otimes X^2 + XY \otimes X^{2} \otimes YX + XY \otimes X \otimes X^{2} \\
     &= 2Z \otimes X \otimes X + 4 Z \otimes Y \otimes Z + 2 X \otimes Z \otimes X + 2X \otimes X \otimes Z.
\end{align*}

\begin{figure}[!htb]
\begin{center}
    \scalebox{0.85}{\includegraphics{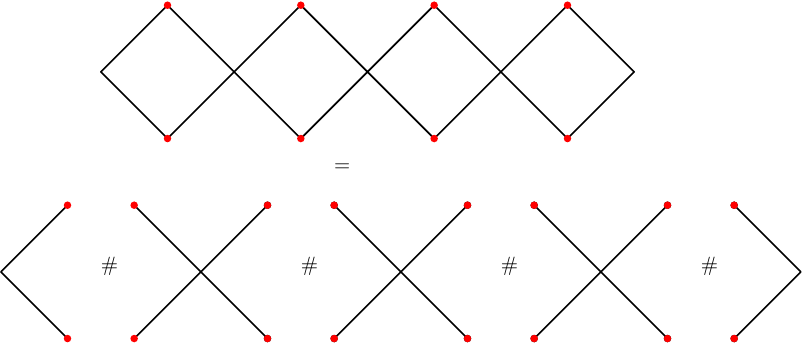}}
\caption{The graph $G_3$.}
\label{fig:n3}
\end{center}
\end{figure}

For $n=3:$ (see Figure \ref{fig:n3})
\begin{align*}
   X (X \otimes Y + Y \otimes X) ^{3} X &=  X(X \otimes Y + Y \otimes X)(X \otimes Y + Y \otimes X)(X \otimes Y + Y \otimes X) X\\
   &=X (X \otimes XY \otimes Y + X \otimes Y^{2} \otimes X + Y \otimes X^2 \otimes Y + Y \otimes XY \otimes X)\\
   & \quad \cdot (X \otimes Y + Y \otimes X) X \\
   &= X(X \otimes XY \otimes YX \otimes Y + X \otimes Y^2 \otimes X^2 \otimes Y + Y \otimes X^2 \otimes YX \otimes Y\\& \quad  + Y \otimes XY \otimes X^2  
    \otimes Y+ X \otimes XY \otimes Y^2 \otimes X + X \otimes Y^2 \otimes XY \otimes X \\ & \quad + Y \otimes X^2 \otimes Y^2 \otimes X + Y \otimes XY \otimes XY \otimes X) X \\
    &= X^2 \otimes XY \otimes YX \otimes YX + X^2 \otimes Y^2 \otimes X^2 \otimes YX + XY \otimes X^2 \otimes YX \otimes YX \\ & \quad + XY \otimes XY \otimes X^2  
    \otimes YX  + X^2 \otimes XY \otimes Y^2 \otimes X^2 + X^2 \otimes Y^2 \otimes XY \otimes X^2\\ &\quad  + XY \otimes X^2 \otimes Y^2 \otimes X^2 + XY \otimes XY \otimes XY \otimes X^2\\
    &=2Z \otimes X \otimes X \otimes X + 4Z \otimes Y \otimes Z \otimes X + 2X \otimes Z \otimes X \otimes X\\ & \quad  + 2X \otimes X \otimes Z  
    \otimes X+ 4Z \otimes X \otimes Y \otimes Z + 4Z \otimes Y \otimes X \otimes Z\\ &\quad  + 4X \otimes Z \otimes Y \otimes Z + 2 X \otimes X \otimes X \otimes Z.
\end{align*}

Based on these computations we make the following observations: 
\begin{enumerate}
    \item Each term must have at least one $Z$,
    \item Each term either starts with an $X$ or with a $Z$,
    \item By symmetry of the graph each term ends with either an $X$ or a $Z$,
    \item In general, each term starts with $ \underbrace{X\otimes \cdots \otimes X}_{k \, \, \text{times}} \otimes Z \otimes \cdots $.; by symmetry, each term ends with $\cdots \otimes Z \otimes \underbrace{X\otimes \cdots \otimes X}_{\ell \, \, \text{times}}$, for some $k, \ell \geq 0$,
    \item The positions of $Z$ and $Y$ alternates in the sense that after every $Z$ there can never be another $Z$ before another $Y$ appears,
    \item The coefficient of each term is given by $2^{\#(Z)}$, where $\#(P)$ denotes the number of times the variable $P$ occurs in the term, and
    \item Any arrangement of $X$'s,$Y$'s and $Z$'s satisfying conditions (1) to (6) must appear in the expansion of $X(X \otimes Y + Y \otimes X)^{n}X$.
\end{enumerate}
We prove these observations in the next series of lemmas. The alert reader can quickly recognise that by substituting $X \leftrightarrow 0, Z \leftrightarrow 1$ and $Y \leftrightarrow -1$ and by observations (1), (2), (3) and (4), each term defines a row of an alternating sign matrix. We will discuss this in detail soon.

\begin{lemma}\label{lem5}
    Each term in the expansion of $X(X \otimes Y + Y \otimes X)^{n}X$ must have at least one $Z$. 
\end{lemma}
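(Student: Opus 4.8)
The plan is to analyze the structure of a general term in the expansion of $X(X \otimes Y + Y \otimes X)^{n}X$ and show that the coordinate-wise product collapses to zero whenever no $Z$ appears. Recall from the notation set up after Lemma \ref{lem44} that $X$, $Y$, $Z$ are the formal column-vector symbols, with products $X^{2}=2Z$, $Y^{2}=Y$, $XY=YX=X$, and $XZ=ZX=0$; the expansion is built by repeated application of the Patching Lemma (Lemma \ref{lem3}), which inserts a coordinate-wise product at each gluing site. Expanding the $n$-fold product by distributivity, a typical term before simplification is obtained by choosing, for each of the $n$ factors $(X \otimes Y + Y \otimes X)$, either the summand $X \otimes Y$ or the summand $Y \otimes X$; the patching then multiplies adjacent coordinates, and the two end caps contribute an extra $X$ on the far left and far right.

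First I would make precise the bookkeeping: reading the $n+1$ tensor slots of $X(X \otimes Y + Y \otimes X)^{n}X$ from left to right, the $j$-th interior slot receives a product of (the right symbol of factor $j-1$ or the left cap) with (the left symbol of factor $j$). The key combinatorial observation is that in the pre-simplification term, \emph{the left symbol of each chosen summand is the opposite letter from its right symbol} — each summand is either $X\otimes Y$ or $Y\otimes X$ — so along the chain of glued slots the pattern of $X$'s and $Y$'s must \emph{alternate between consecutive gluing sites coming from the same factor}. More importantly, the only way a slot can avoid producing a $Z$ is to be a product of the form $XY$, $YX$, or $YY$ (giving $X$ or $Y$), never $XX$ (which gives $2Z$). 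So suppose, for contradiction, that some fully simplified term contains no $Z$ at all.

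The main step, and the main obstacle, is to derive a contradiction from this assumption by a parity/counting argument. Here is the idea I would push: count the total number of $X$-symbols contributed by the end caps and the $2n$ symbols coming from the $n$ chosen summands. Each summand $X\otimes Y$ or $Y\otimes X$ contributes exactly one $X$ and one $Y$, so the chosen summands contribute exactly $n$ $X$'s and $n$ $Y$'s in total; the two caps contribute two more $X$'s, for a grand total of $n+2$ occurrences of the letter $X$ among the $2n+2$ symbols that get distributed into the $n+1$ slots. If no slot ever forms the product $XX$, then every slot uses \emph{at most one} $X$, so at most $n+1$ of the $X$'s can be "absorbed" into the $n+1$ slots without two $X$'s meeting — but we have $n+2$ $X$'s to place, forcing at least one slot to be a product containing two $X$'s, hence equal to $2Z\neq 0$ (and in particular nonzero, so this term does not vanish for that reason, but it \emph{does} contain a $Z$). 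This contradicts the assumption, completing the proof. I would need to be careful that the "at most one $X$ per slot" claim is exactly right — a slot is a product of precisely two symbols (one from each side of a gluing), and $XX$ is the unique two-symbol product among $\{X,Y\}$ that yields a $Z$; if the term is to have no $Z$, every one of the $n+1$ slots must therefore be one of $XY, YX, YY$, each of which uses at most one $X$, and summing gives at most $n+1 < n+2$ $X$'s available, the desired contradiction. The delicate point to verify carefully is the precise count of $X$-symbols (that the caps really do contribute two extra $X$'s and that each summand contributes exactly one), and that every slot is genuinely a binary product so the pigeonhole bound $n+1$ is correct.
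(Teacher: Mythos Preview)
Your argument is correct and takes a genuinely different route from the paper. The paper argues by induction on $n$: expanding the leftmost factor gives
\[
X(X\otimes Y + Y\otimes X)^{n} X \;=\; 2Z\otimes Y(X\otimes Y+Y\otimes X)^{n-1}X \;+\; X\otimes X(X\otimes Y+Y\otimes X)^{n-1}X,
\]
so every term of the first summand already begins with $Z$, while the second summand is covered by the inductive hypothesis. Your proof is instead a direct pigeonhole count: the $n+1$ tensor slots each receive a product of exactly two letters from $\{X,Y\}$, and among the $2n+2$ letters distributed there are exactly $n+2$ copies of $X$ (one per chosen summand plus the two end caps), so some slot is forced to be $X\cdot X=2Z$. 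Your approach is shorter, avoids induction, and explains transparently why a $Z$ is unavoidable; the paper's inductive decomposition, on the other hand, is the same template it reuses for Lemmas~\ref{lem6}--\ref{lem11}, so its payoff is in setting up that uniform machinery rather than in this lemma alone. One small clarification worth adding to your write-up: since the four possible slot products $XX,\,XY,\,YX,\,YY$ simplify to $2Z,\,X,\,X,\,Y$ respectively and none of these is zero, no monomial contribution can vanish outright --- hence the pigeonhole conclusion really does survive to the combined expansion, and your parenthetical worry about vanishing terms is unnecessary.
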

\begin{proof}
    For $n=1,2$ and $3$ we see that the lemma is true.  For $n \geq 4$ we use the following expansion
\begin{align*}
    X(X \otimes Y + Y \otimes X)^{n}X &= X(X \otimes Y + Y \otimes X) (X\otimes Y + Y \otimes X )^{n-1} X \\
    &=(2Z \otimes Y + X \otimes X) (X \otimes Y + Y \otimes X)^{n-1} X \\
    &=2Z \otimes Y(X \otimes Y + Y \otimes X)^{n-1}X + X \otimes X(X \otimes Y + Y \otimes X)^{n-1}X.
\end{align*}
Now a simple induction on $n$ completes the proof of the lemma.    
\end{proof}
\begin{lemma}\label{lem6}
    Each term in the expansion $X(X \otimes Y + Y \otimes X)^{n}X$  either starts with  an $X$ or with a $Z$.
\end{lemma}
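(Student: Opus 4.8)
The plan is to mirror the induction used in Lemma~\ref{lem5}. First I would record the basic product identity
\[
X(X \otimes Y + Y \otimes X) = X^2 \otimes Y + XY \otimes X = 2Z \otimes Y + X \otimes X,
\]
using the relations $X^2 = 2Z$ and $XY = X$. Thus for $n \geq 1$,
\[
X(X \otimes Y + Y \otimes X)^{n}X = 2\,Z \otimes \bigl(Y(X \otimes Y + Y \otimes X)^{n-1}X\bigr) + X \otimes \bigl(X(X \otimes Y + Y \otimes X)^{n-1}X\bigr).
\]
Every term of the expansion therefore arises either from the first summand, where it begins with $Z$, or from the second summand, where it begins with $X$. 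Since multiplying a simple tensor on the left by $Z$ (respectively $X$) in the first coordinate cannot produce cancellation that changes the leading symbol — the leading factor is literally $Z$ or $X$ by construction — each surviving monomial in $X(X \otimes Y + Y \otimes X)^{n}X$ has first tensor factor equal to $X$ or $Z$.

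To make this rigorous I would set up the induction carefully on $n$. For the base cases $n = 1, 2, 3$ the explicit expansions computed above exhibit every term starting with $X$ or $Z$. For the inductive step, assume the claim holds for exponent $n-1$; I need to check that the two pieces $Z \otimes \bigl(Y(\cdots)^{n-1}X\bigr)$ and $X \otimes \bigl(X(\cdots)^{n-1}X\bigr)$ each contribute only terms whose leading factor is $X$ or $Z$. The second piece is immediate — it literally begins with $X$. For the first piece I should observe that $Y(X \otimes Y + Y \otimes X)^{n-1}X$, when expanded into simple tensors, has leading factors $YX = X$ or $Y^2 = Y$ in the first slot (coming from the $X$ or $Y$ that opens each term of $(X \otimes Y + Y \otimes X)^{n-1}X$), and then prepending $Z$ via the internal multiplication of the Patching Lemma replaces that with $ZX = 0$ or $ZY$; but $ZY$ is not one of our named symbols, so here I must be slightly more careful and instead note that $Z \cdot Y$ should be read coordinate-wise: $Z = \left[\begin{smallmatrix} n \\ n \end{smallmatrix}\right]$ and $Y = \left[\begin{smallmatrix} y \\ y \end{smallmatrix}\right]$, so $Z \cdot Y = \left[\begin{smallmatrix} ny \\ ny \end{smallmatrix}\right] = \left[\begin{smallmatrix} n \\ n \end{smallmatrix}\right] = Z$. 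Hence $Z \cdot Y = Z$ and $Z \cdot X = 0$, so the surviving terms of the first piece begin with $Z$, as needed.

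The main obstacle I anticipate is purely bookkeeping: the relation table was stated only for the products $X^2, Y^2, XY, YX, XZ, ZX$, but the induction as I have organized it also needs $ZY = YZ = Z$, which follows trivially from the matrix (coordinate-wise) definition of these symbols but was not listed explicitly. I would either add $ZY = YZ = Z$ to the relation list preceding the lemma, or phrase the argument so that $Z$ is only ever introduced in the \emph{first} coordinate (via the $2Z \otimes Y$ term), so that no $Z$-times-$Y$ product in the first slot is ever required — in which case the leading factor of every term is manifestly the explicit $X$ or $Z$ that heads one of the two summands, and no further simplification of the first coordinate is needed. With that small hygiene fix the proof is a two-line induction exactly parallel to Lemma~\ref{lem5}, and in fact Lemmas~\ref{lem5} and~\ref{lem6} can be proved simultaneously from the same displayed recursion.
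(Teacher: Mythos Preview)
Your approach is correct and matches the paper's proof exactly: the same recursion
\[
X(X\otimes Y+Y\otimes X)^{n}X
= 2Z\otimes\bigl(Y(X\otimes Y+Y\otimes X)^{n-1}X\bigr)
+ X\otimes\bigl(X(X\otimes Y+Y\otimes X)^{n-1}X\bigr)
\]
and the same observation that the first tensor slot is untouched by subsequent internal multiplications, so the leading factor is literally the displayed $Z$ or $X$.

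One clarification: your detour through ``prepending $Z$ via internal multiplication'' and computing $ZY=Z$, $ZX=0$ is based on a misreading of the notation. In the display above the symbol $\otimes$ after $2Z$ is a genuine tensor factor, not an internal product; the internal multiplication has already been absorbed into the $Y(\cdots)$ and $X(\cdots)$ pieces. So there is no $Z\cdot Y$ occurring in the first slot at all. Your own ``hygiene fix'' at the end --- that the leading $Z$ or $X$ is manifest and no further simplification of the first coordinate is needed --- is exactly right and is precisely what the paper does; you should simply drop the intermediate discussion of $ZY$ and $ZX$, which is both unnecessary and resting on an incorrect parse of the expression.
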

\begin{proof}
    The lemma is certainly true $n=1,2$ and $3$. For $n\geq 4$, we use the same trick like the previous lemma
    \[X(X \otimes Y + Y \otimes X)^{n}X= 2Z \otimes Y(X \otimes Y + Y \otimes X)^{n-1}X + X \otimes X(X \otimes Y + Y \otimes X)^{n-1}X \]

  \noindent Next we note that, in the expansion of $Y(X \otimes Y + Y \otimes X)^{n-1}X$ and $X(X \otimes Y + Y \otimes X)^{n-1}X$ , it can never interact with the first term. So, the first terms in the above expression remain $Z$ and $X$ respectively. This concludes the proof.
\end{proof}

\begin{lemma}\label{lem7}
      Each term in the expansion $X(X \otimes Y + Y \otimes X)^{n}X$  either ends with an $X$ or with a $Z$.  
\end{lemma}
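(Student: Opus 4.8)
The plan is to mirror the argument used for Lemma \ref{lem6}, exploiting the left–right symmetry of the graph $G_n$ depicted in Figure \ref{fig:pp8-1}. The key structural observation is that the expression $X(X \otimes Y + Y \otimes X)^{n}X$ is palindromic: reversing the order of all tensor factors sends this expression to itself, because the graph $G_n$ is symmetric under the reflection that swaps its left cap with its right cap and reverses the chain of $X$'s in between. Since the coordinatewise multiplication on $\{X,Y,Z\}$ is commutative (as recorded by the relations $XY=YX=X$, $XZ=ZX=0$, and the symmetry of $X^2=2Z$, $Y^2=Y$), this reversal is compatible with all the internal multiplications carried out via the Patching Lemma (Lemma \ref{lem3}). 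Hence the multiset of terms appearing in the expansion is invariant under reversing each term.

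First I would verify the base cases $n=1,2,3$ directly from the explicit computations already displayed above: in each one, every term visibly ends in either an $X$ or a $Z$ (never a $Y$). Next, for $n \geq 4$, I would invoke the reversal symmetry just described: a term ends with an $X$ (resp. $Z$) if and only if its reversal \emph{starts} with an $X$ (resp. $Z$). By Lemma \ref{lem6}, every term in the expansion starts with an $X$ or a $Z$; applying this to the reversed term — which is itself a term in the same expansion — we conclude that every term ends with an $X$ or a $Z$.

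Alternatively, if one prefers to avoid formalizing the reversal symmetry, the same induction as in Lemma \ref{lem6} works from the right: write
\[
X(X \otimes Y + Y \otimes X)^{n}X = X(X \otimes Y + Y \otimes X)^{n-1}(X \otimes Y + Y \otimes X)X = X(X \otimes Y + Y \otimes X)^{n-1}(2Y \otimes Z + X \otimes X),
\]
and note that in the expansion of $X(X \otimes Y + Y \otimes X)^{n-1}Y$ and $X(X \otimes Y + Y \otimes X)^{n-1}X$ the rightmost tensor factor never interacts with the trailing $Z$ or $X$, so each term ends in $Z$ or $X$ respectively, completing the induction.

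The main obstacle is purely expository rather than mathematical: one must state the reversal symmetry of $G_n$ (and the consequent palindromic structure of the state sum) precisely enough that appealing to Lemma \ref{lem6} is rigorous, rather than hand-waving "by symmetry." The cleanest route is to record, once and for all, that the anti-automorphism of $\mathcal{M}^{\otimes m}$ reversing tensor slots commutes with $\phi_{I,J}$ up to relabeling $I \leftrightarrow J$, which follows from commutativity of the product in $\mathcal{M}$; everything else is then a one-line deduction.
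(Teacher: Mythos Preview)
Your proposal is correct and your primary approach is essentially the paper's own proof: the paper invokes the $180^\circ$ rotational symmetry of $G_n$ to conclude that the state sum is invariant under reversal of tensor slots, and then applies Lemma~\ref{lem6}; your reflection/palindrome argument is the same idea in different words. Your alternative right-hand induction (computing $(X\otimes Y + Y\otimes X)X = X\otimes X + 2\,Y\otimes Z$ and observing the trailing slot is untouched) is also valid and simply mirrors the proof of Lemma~\ref{lem6} from the other end, which the paper chose not to spell out.
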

\begin{proof}
    We can use the geometric reasoning to prove this lemma. We observe that the graph is symmetric with respect to rotation by $180^\circ$. Thus the terms appearing in its state sum decomposition must also be symmetric with respect to rotation by $180^\circ$. Then, an application of Lemma \ref{lem6} gives us the result. 
\end{proof}
We set some notation for the ease of writing in the sequel. Define $X(k)$ to be following expression
 \[ X(k) := \underbrace{X \otimes \cdots  \otimes X}_{ k \, \, \text{times}}.\]
\begin{lemma}\label{lem8}
For each term in the expansion $X(X \otimes Y+ Y \otimes X)^{n} X$, there exist a nonnegative number $k$ such that it  starts with $  \underbrace{X\otimes \cdots \otimes X}_{k \, \, \text{times}} \otimes Z \otimes \cdots $. By symmetry, there exists a nonnegative number $\ell$ such that the term ends with $ \cdots \otimes Z \otimes \underbrace{X\otimes \cdots \otimes X}_{\ell \, \, \text{times}}$.
\end{lemma}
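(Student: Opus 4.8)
The plan is to leverage the same inductive decomposition used in Lemmas \ref{lem5} and \ref{lem6}, namely
\[
X(X \otimes Y + Y \otimes X)^{n}X = 2Z \otimes Y(X \otimes Y + Y \otimes X)^{n-1}X + X \otimes X(X \otimes Y + Y \otimes X)^{n-1}X,
\]
and to argue that in either summand the leading segment of each term is a (possibly empty) block of $X$'s followed immediately by a $Z$. First I would observe that by Lemma \ref{lem5} every term contains at least one $Z$, so the notion of "the first $Z$ in a term" is well-defined and the claimed form $X(k) \otimes Z \otimes \cdots$ makes sense once we know that everything strictly to the left of that first $Z$ is an $X$. So the real content is: the first entry of every term is either $X$ or $Z$ (this is exactly Lemma \ref{lem6}), and more generally, as long as we have not yet hit a $Z$, every entry so far has been an $X$.

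The key step is an induction on $n$ tracking a slightly stronger statement about the two auxiliary expansions $X(X \otimes Y + Y \otimes X)^{m}X$ and $Y(X \otimes Y + Y \otimes X)^{m}X$: in the first, every term begins with a block $X(k)$ ($k \geq 0$) followed by a $Z$; in the second, every term begins with a block of the form $Y$ or $X(k) \otimes Z$ — more precisely, the first non-$Y$ entry (if any) is preceded only by $X$'s once a $Z$ appears, but one must be careful because a term in the $Y$-capped expansion can legitimately start with $Y$ (as seen in the $n=1$ computation $2Z \otimes X + 2X \otimes Z$, whose $Y$-capped analogue would start differently). Concretely, in the right-hand summand $X \otimes X(\cdots)X$, the leading $X$ is untouched by any multiplication (as noted in the proof of Lemma \ref{lem6}), so a term here looks like $X \otimes (\text{a term of } X(\cdots)^{n-1}X)$, and by the inductive hypothesis that inner term starts with $X(k) \otimes Z$, giving $X(k+1) \otimes Z \otimes \cdots$. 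In the left-hand summand $2Z \otimes Y(\cdots)X$, the leading entry is literally $Z$, so $k = 0$ and we are done immediately, regardless of what follows.

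The statement about the tail — that each term ends with $\cdots \otimes Z \otimes X(\ell)$ — then follows exactly as in Lemma \ref{lem7}: the graph $G_n$ is invariant under rotation by $180^\circ$, hence its state sum decomposition is symmetric under reversing the tensor order, so the tail behaviour is the mirror image of the head behaviour just established. The main obstacle I anticipate is bookkeeping the $Y$-capped expansion correctly inside the induction: a term of $Y(X \otimes Y + Y \otimes X)^{m}X$ need not start with $X$ or $Z$ — it can start with $Y$ — so the inductive hypothesis must be phrased to allow a prefix of $Y$'s before the first $X$-block-then-$Z$ pattern, and one must check that when this gets premultiplied by $X \otimes (\cdot)$ on the outside (via $X \cdot Y = X$ on the relevant coordinate and $X \cdot X = 2Z$, etc.) the resulting leading structure still collapses to the desired $X(k) \otimes Z$ shape. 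Once the two coupled invariants are stated precisely, the induction step is a short case check using only the relations $X^2 = 2Z$, $XY = YX = X$, $Y^2 = Y$, $XZ = ZX = 0$.
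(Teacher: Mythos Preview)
Your approach is essentially the same as the paper's: use the decomposition $X(X\otimes Y + Y\otimes X)^n X = 2Z\otimes Y(\cdots)^{n-1}X + X\otimes X(\cdots)^{n-1}X$, take $k=0$ in the first summand and apply the inductive hypothesis to the inner factor in the second to get $k=p+1$, then invoke the $180^\circ$ symmetry for the tail. Your anticipated obstacle --- tracking the $Y$-capped expansion as part of a coupled induction --- is a red herring: as you yourself observe, the leading $Z$ in the first summand settles that case outright regardless of what $Y(\cdots)X$ looks like, so the induction runs on $X(\cdots)^m X$ alone and no auxiliary invariant is needed.
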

\begin{proof}
    Our computation for $n=1, 2$ and $3$ shows that the lemma is true. For $n \geq 4$ we may use induction
    \[X(X \otimes Y + Y \otimes X)^{n}X= 2 Z \otimes Y(X \otimes Y + Y \otimes X)^{n-1} \cdot X + X \otimes X(X \otimes Y + Y \otimes X)^{n-1}X. \]
    For any term appearing in the first expansion: $Z \otimes Y(X \otimes Y + Y \otimes X)^{n-1} \cdot X$, we choose $k=0$. 
    For any term appearing in the second expansion: $X \otimes X(X \otimes Y + Y \otimes X)^{n-1}X$, it looks like $X \otimes W$ where $W$ is a term appearing in the in the expansion of $ X(X \otimes Y + Y \otimes X)^{n-1}X$. Now for $W$, induction hypothesis shall ensure the existence of $p \geq 0$ such that $W$ starts with $X(p) \otimes Y$. So in this case, choose $k= p+1$. By symmetry, we are done with the other part of the lemma as well. 
\end{proof}
\begin{lemma}\label{lem9}
    The position of $Z$ and $Y$  in each term of the expansion $X(X \otimes Y + Y \otimes X)^{n}X$ alternates. That is, after every occurrence of a $Z$, there must occur an $Y$ before another $Z$ can appear.
\end{lemma}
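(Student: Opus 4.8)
\textbf{Proof proposal for Lemma \ref{lem9}.} The plan is to run the same induction scheme used in Lemmas \ref{lem5}--\ref{lem8}, peeling off one factor of $(X \otimes Y + Y \otimes X)$ and using the identity
\[
X(X \otimes Y + Y \otimes X)^{n}X = 2\, Z \otimes Y(X \otimes Y + Y \otimes X)^{n-1} X + X \otimes X(X \otimes Y + Y \otimes X)^{n-1}X,
\]
which already appears in the proof of Lemma \ref{lem5}. The base cases $n = 1, 2, 3$ are checked directly from the explicit computations displayed above. For the inductive step I would strengthen the statement slightly to make the induction go through: I claim that in \emph{every} term of $X(X \otimes Y + Y \otimes X)^{n}X$, reading the letters left to right, the $Z$'s and $Y$'s strictly alternate (ignoring all the $X$'s), and moreover the first non-$X$ letter encountered — if any — is a $Z$ (this is consistent with Lemma \ref{lem8}). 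In fact the cleanest strengthening is to track, for the ``half-open'' expressions $X(X\otimes Y + Y\otimes X)^{m}X$ and $Y(X\otimes Y+Y\otimes X)^{m}X$ and $X(X\otimes Y+Y\otimes X)^{m}$, etc., both the alternation property and the identity of the last non-$X$ letter; then the splitting identity above composes these pieces with the internal multiplication rule $\phi$ of Lemma \ref{lem3}.

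The key computation is to see what happens at the seam where the leading factor meets a term $W$ of the $(n-1)$-st power. In the second summand $X \otimes X(X \otimes Y + Y \otimes X)^{n-1}X$, a term has the shape $X \otimes W$ with $W$ a term of $X(X\otimes Y + Y\otimes X)^{n-1}X$; prepending an $X$ changes nothing about the $Z/Y$ pattern, so alternation is inherited directly from the induction hypothesis. In the first summand $2\, Z \otimes Y(X \otimes Y + Y \otimes X)^{n-1}X$ the situation is the genuinely new one: a term is $Z \otimes (Y \cdot W')$ where $W'$ is a term of $(X\otimes Y + Y\otimes X)^{n-1}X$ and the product $Y\cdot W'$ happens in the first coordinate of $W'$. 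Here I use the multiplication table $YX = X$, $Y^2 = Y$, $YZ = $ (not possible, since a factor $(X\otimes Y + Y\otimes X)$ never produces a leading $Z$ before one has been created by an $X^2$): concretely, multiplying $Y$ into the first slot of $W'$ either leaves a leading $X$ (if $W'$ started with $X$) or merges with a leading $Y$ to give $Y$ again. Either way the first non-$X$ letter of $Y\cdot W'$ is still whatever the first non-$X$ letter of $W'$ was, and crucially that letter cannot be $Z$: the only way a $Z$ enters is via $X^2 = 2Z$, and in $(X\otimes Y+Y\otimes X)^{n-1}X$ reading from the left the first coordinate to be ``resolved'' is always a lone $X$ or a $Y$, never a squared $X$. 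Hence $Z \otimes (Y\cdot W')$ has its $Z$ in position one followed by a block of $X$'s and then the first non-$X$ letter of $W'$, which is a $Y$ (or nothing) — so the $Z$ is immediately followed, in the $Z/Y$ subsequence, by a $Y$, and thereafter alternation holds by the induction hypothesis applied to $W'$.

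The main obstacle I anticipate is bookkeeping rather than mathematical depth: one must be careful that the internal multiplication $\phi_{I,J}$ of Lemma \ref{lem3} is really acting only on the single coordinate where the two graph pieces are glued, so that the ``alternation'' invariant is genuinely local to that seam and the rest of each tensor word is untouched. To handle this cleanly I would phrase the inductive hypothesis not just as a property of the closed word $X(\cdots)^n X$ but as a list of properties of the several ``dangling'' words that arise when one end is an $X$, a $Y$, or nothing — essentially a small finite-state invariant: record (i) whether the word, read left to right and deleting $X$'s, is a legal alternating $Z/Y$-string, (ii) what its first non-$X$ letter is, and (iii) what its last letter is. One then checks that left-multiplication by $X$, by $Y$, and the coordinate-gluing $\phi$ all preserve membership in this finite set of ``good'' states, with the transitions governed by $X^2 = 2Z$, $Y^2 = Y$, $XY = YX = X$, $XZ = ZX = 0$. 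Once this state machine is written down the lemma falls out immediately, and in fact so do Lemmas \ref{lem5}--\ref{lem8} simultaneously, which is arguably the more natural way to organize the whole block.
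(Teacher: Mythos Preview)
Your plan is correct and is essentially the paper's own argument: induct on $n$ via the splitting identity from Lemma~\ref{lem5}, with the base cases $n=1,2,3$ read off the explicit computations, and strengthen the hypothesis so that the induction closes. The difference is only in the choice of strengthening. Where you propose tracking a finite-state invariant (first and last non-$X$ letter, etc.) across several half-open expressions, the paper adjoins a single auxiliary family $Z\otimes(X\otimes Y+Y\otimes X)^{n}X$ to the induction hypothesis, and then the two recursions
\begin{align*}
Z\otimes(X\otimes Y+Y\otimes X)^{n}X &= Z\otimes X\otimes(X\otimes Y+Y\otimes X)^{n-1}X + Z\otimes Y\otimes X(X\otimes Y+Y\otimes X)^{n-1}X,\\
X(X\otimes Y+Y\otimes X)^{n}X &= 2Z\otimes(X\otimes Y+Y\otimes X)^{n-1}X + X\otimes X(X\otimes Y+Y\otimes X)^{n-1}X,
\end{align*}
close up directly (with an appeal to Lemma~\ref{lem8}). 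This is more economical than your scheme and in particular bypasses the one soft spot in your sketch, namely the passage where ``the first coordinate of $(X\otimes Y+Y\otimes X)^{n-1}X$ is never a squared $X$'' is used as if it said ``the first \emph{non-$X$} letter is never $Z$''; those are different claims, and the paper's choice of auxiliary expression is precisely what makes that distinction moot.
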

\begin{proof}
    By the previous computations we know that the lemma is true for $n=1, 2$ and $3$. We use induction on $n$. We first investigate the following expansion \[ Z \otimes (X \otimes Y + Y \otimes X) ^{n} X, \] for $n=1$ and $n=2$. 
    
    For $n=1$: 
    \begin{align*}
        Z \otimes (X \otimes Y + Y \otimes X) X &= Z \otimes (X \otimes X + 2 Y \otimes Z) \\
        &= Z \otimes X \otimes X + 2 Z \otimes Y \otimes Z
    \end{align*}
    
    For $n=2$: 
\begin{align*}
    Z \otimes (X \otimes Y + Y \otimes X)^{2} X &= Z \otimes (X \otimes X \otimes Y + X \otimes Y \otimes X + Y \otimes X \otimes X + 2Y \otimes Z \otimes Y) X \\
    &= Z \otimes X \otimes X \otimes X + 2Z \otimes X \otimes Y \otimes Z+ 2Z \otimes Y \otimes X \otimes Z \\&\quad + 2Z \otimes Y \otimes Z \otimes X.
\end{align*}
We set our \textit{induction hypothesis} as follows: for all $n \in \mathbb{N}$, the position of $Z$ and $Y$ alternates in the expansions of $Z \otimes (X \otimes Y + Y \otimes X)^{n} X$ and $X (X \otimes Y + Y \otimes X)^{n}X$.

The base cases have been verified for $n=1$ and $n=2$ in the preceding calculations. In general, using Lemma \ref{lem8} and the induction hypothesis, it follows from the following expansion: 
\begin{align*}
    Z \otimes (X \otimes Y + Y \otimes X)^{n} X &= Z \otimes X \otimes (X \otimes Y + Y \otimes X)^{n-1}X + Z \otimes Y \otimes X(X \otimes Y + Y \otimes X)^{n-1}X, \\ 
    X (X \otimes Y + Y \otimes X)^{n}X &= 2Z \otimes (X \otimes Y + Y \otimes X)^{n-1} X + X \otimes X(X \otimes Y + Y \otimes X)^{n-1}X.
\end{align*}
This completes the proof.
\end{proof}
\begin{lemma}\label{lem10}
    The coefficient of each term in $X(X \otimes Y + Y \otimes X)^{n}X$ is given by $2^{\#(Z)}$.
\end{lemma}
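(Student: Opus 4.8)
The plan is to prove this by the same induction on $n$ used in Lemmas \ref{lem5}--\ref{lem9}, tracking the coefficient alongside the word structure. The base cases $n=1,2,3$ are already verified by the explicit computations: in each displayed expansion the coefficient of a term is exactly $2$ raised to the number of $Z$'s it contains. For the inductive step I would again use the two-line expansion
\begin{align*}
X(X \otimes Y + Y \otimes X)^{n}X &= 2Z \otimes Y(X \otimes Y + Y \otimes X)^{n-1}X + X \otimes X(X \otimes Y + Y \otimes X)^{n-1}X,
\end{align*}
together with the companion expansion of $Z \otimes (X \otimes Y + Y \otimes X)^{n}X$ introduced in the proof of Lemma \ref{lem9}. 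So the actual induction hypothesis I would carry is the conjunction: in both $X(X \otimes Y + Y \otimes X)^{n}X$ and $Z \otimes (X \otimes Y + Y \otimes X)^{n}X$, every term has coefficient $2^{\#(Z)}$.

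The key computational input is how the product rules $X^2 = 2Z$, $Y^2 = Y$, $XY = YX = X$, $XZ = ZX = 0$ interact with the coefficient. The crucial observation is that the \emph{only} relation that introduces a factor of $2$ is $X^2 = 2Z$, and it does so precisely when it creates a new $Z$; the relations $Y^2 = Y$ and $XY = YX = X$ are coefficient-neutral, and $XZ = ZX = 0$ kills the term entirely (so it never contributes). Hence, in any nonzero term of the fully multiplied-out expression, the accumulated scalar is $2$ to the number of times the rule $X^2 = 2Z$ was applied, which equals the number of $Z$'s appearing in that term — here one uses that every $Z$ in a surviving term arises from an $X^2$ collision (no $Z$'s are present in the factors $X \otimes Y + Y \otimes X$, and the only other source, the leading/trailing $X$'s, also produce $Z$ via $X^2$), and that no two separate $X^2$ collisions can merge into a single $Z$ coordinate. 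Carrying this bookkeeping through the two recursive expansions: the term $2Z \otimes (\cdots)$ contributes its explicit factor of $2$ for the freshly created leading $Z$ while the tail, by induction, already carries $2^{\#(Z)\text{ of the tail}}$; and the term $X \otimes X(\cdots)$ either leaves the tail's leading letter as $X$ (coefficient unchanged, no new $Z$) or collides to form a new $Z$ (a factor of $2$, one new $Z$), and similarly for $Z \otimes X \otimes (\cdots)$ versus $Z \otimes Y \otimes (\cdots)$ in the companion expansion. In every case the exponent of $2$ and the count of $Z$'s change in lockstep, closing the induction.

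The main obstacle is not any single hard step but making the bijection ``factors of $2$'' $\leftrightarrow$ ``occurrences of $Z$'' airtight: one must be sure that distributing the products never causes two $Z$-producing events to land in the same tensor slot, nor a $Z$ to be produced without a compensating factor of $2$, and that the $XZ = ZX = 0$ relation — while it removes terms — never silently removes the factor of $2$ from a term that survives. I would handle this by phrasing the induction so that the statement about coefficients is proved \emph{simultaneously} with (and using) the alternation structure from Lemma \ref{lem9} and the boundary structure from Lemma \ref{lem8}, since those already guarantee the positional discipline of the $Z$'s; with that structure in hand the coefficient claim follows by a direct match in each of the four cases of the two recursive expansions.
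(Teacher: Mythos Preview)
Your approach is essentially the paper's: a simultaneous induction on $X(X\otimes Y+Y\otimes X)^nX$ together with the companion expression, driven by the same recursive expansion, with the crucial observation (which the paper states explicitly) that the two summands in each expansion never mix because their first (respectively second) tensor slot differs. One small correction to make before executing it: your companion hypothesis should either be stated for $2Z\otimes(X\otimes Y+Y\otimes X)^nX$ (as the paper does) or assert coefficient $2^{\#(Z)-1}$ for $Z\otimes(\cdots)$, since the prefixed $Z$ there does not itself carry a factor of $2$ --- cf.\ the $n=1$ computation $Z\otimes X\otimes X + 2\,Z\otimes Y\otimes Z$ in the proof of Lemma~\ref{lem9}.
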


\begin{proof}
We shall deploy a similar strategy like we used for Lemma \ref{lem8}. In order to prove this lemma we use the following \textit{induction hypothesis}: the coefficient of each term appearing in the expansions of $X(X \otimes Y + Y \otimes X)^nX$ and $2Z \otimes (X \otimes Y + Y \otimes X)^{n}X$ is equal to $2^{\#(Z)}$.  

The base cases have been verified in the previous lemma. The general argument follows from the previous computation: 
\begin{align*}
    2Z \otimes (X \otimes Y + Y \otimes X)^{n} X &= 2Z \otimes X \otimes (X \otimes Y + Y \otimes X)^{n-1}X + 2Z \otimes Y \otimes X(X \otimes Y + Y \otimes X)^{n-1}X \\ 
    X (X \otimes Y + Y \otimes X)^{n}X &= 2Z \otimes (X \otimes Y + Y \otimes X)^{n-1} X + X \otimes X(X \otimes Y + Y \otimes X)^{n-1}X.
\end{align*}
To complete the argument, we note that none of the terms in $Z \otimes X \otimes (X \otimes Y + Y \otimes X)^{n-1}X$ and $Z \otimes Y \otimes X(X \otimes Y + Y \otimes X)^{n-1}X$ mix with each other because the second term is different. Similarly, the terms in $Z \otimes (X \otimes Y + Y \otimes X)^{n-1} X$ and $X \otimes X(X \otimes Y + Y \otimes X)^{n-1}X$ do not mix with each other as the first term is different. 
\end{proof}
\begin{lemma}\label{lem11}
    Any arrangement of $X$'s, $Y$'s, and $Z$'s satisfying conditions (1) to (6) from before must appear in the expansion of $X(X \otimes Y + Y \otimes X)^{n}X$.
\end{lemma}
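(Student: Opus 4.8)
The plan is to prove the converse direction by a constructive induction on $n$, matching the recursive expansion of $X(X \otimes Y + Y \otimes X)^n X$ used in Lemmas \ref{lem5}--\ref{lem10}. Call a tensor word $W = P_1 \otimes \cdots \otimes P_{n+1}$ with each $P_i \in \{X, Y, Z\}$ \emph{admissible of length $n+1$} if it satisfies conditions (1)--(6): it contains at least one $Z$, it starts with a block $X(k) \otimes Z$ and ends with a block $Z \otimes X(\ell)$ for some $k,\ell \geq 0$ (so in particular the first and last letters are $X$ or $Z$), and between any two consecutive $Z$'s there is at least one $Y$ — equivalently, deleting the $X$'s leaves an alternating word in $Z$ and $Y$ that both starts and ends with $Z$. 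I would first record that admissibility is equivalent to the following \emph{structural decomposition}: $W$ has the form $X(k_0) \otimes Z \otimes X(k_1) \otimes Y \otimes X(k_2) \otimes Z \otimes X(k_3) \otimes Y \otimes \cdots \otimes Y \otimes X(k_{2m-1}) \otimes Z \otimes X(k_{2m})$ for some $m \geq 1$ and $k_i \geq 0$ (the $Z$'s in odd-indexed gaps, $Y$'s in even-indexed gaps between them). Since Lemma \ref{lem10} already supplies the correct coefficient $2^{\#(Z)}$ for every term actually appearing, it suffices to show each admissible word appears \emph{at all}.

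As in the earlier lemmas, the induction should be run simultaneously on two expansions. Set $f_n := X(X \otimes Y + Y \otimes X)^n X$ and $g_n := Z \otimes (X \otimes Y + Y \otimes X)^n X$, and use the recursions already derived:
\begin{align*}
f_n &= 2\,Z \otimes (X \otimes Y + Y \otimes X)^{n-1}X \;+\; X \otimes X(X \otimes Y + Y \otimes X)^{n-1}X \\ &= 2\,Z \otimes \bigl[(X \otimes Y + Y \otimes X)^{n-1}X\bigr] + X \otimes f_{n-1}, \\
g_n &= Z \otimes X \otimes (X \otimes Y + Y \otimes X)^{n-1}X \;+\; Z \otimes Y \otimes X(X \otimes Y + Y \otimes X)^{n-1}X.
\end{align*}
The claimed \textbf{induction hypothesis} is: every admissible word of length $n+1$ occurs in $f_n$, and moreover every word of length $n+2$ of the form $Z \otimes X(k) \otimes (\text{admissible-type tail})$ — precisely, every word obtained by prepending $Z \otimes X(k)$ to an admissible word in a way that keeps the $Z/Y$-alternation — occurs in $g_n$. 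I would state the $g_n$-clause cleanly as: a length-$(n+2)$ word $Z \otimes V$ occurs in $g_n$ iff $Z \otimes V$ is itself admissible, i.e. $V$ starts with $X(k) \otimes Y \otimes \cdots$ or $V$ starts with $X(k) \otimes Z \otimes \cdots$ with the global alternation respected; the first letter being $Z$ is automatic here and the conditions (1)--(6) are exactly what must hold of the whole word $Z \otimes V$.

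The inductive step then splits an admissible target word $W$ of length $n+1$ by its first letter. If $W$ starts with $Z$, write $W = Z \otimes V$; since $W$ is admissible of length $\geq 2$ and $n \geq 1$, the tail $V$ must begin with $X(k_1)\otimes Y$ (it cannot begin with another $Z$ before a $Y$, by condition (5)), and then $Z\otimes V$ matches a term of $2\,Z \otimes (X \otimes Y + Y \otimes X)^{n-1}X$: here one checks that $(X \otimes Y + Y \otimes X)^{n-1}X$ produces exactly all words $V'$ such that $Z \otimes V'$ has the right alternation structure — this is the content I would fold into the $g_{n-1}$ part of the hypothesis by noting $Z\otimes(X \otimes Y + Y \otimes X)^{n-1}X = g_{n-1}$. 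If instead $W$ starts with $X$, peel off the leading $X$: $W = X \otimes W'$ where $W'$ has length $n$ and is again admissible (removing a leading $X$ from the block $X(k_0)\otimes Z\otimes\cdots$ just decrements $k_0$, and all other conditions are untouched since $k_0 \geq 1$ here), so $W'$ occurs in $f_{n-1}$ by induction and hence $W = X \otimes W'$ occurs in $X \otimes f_{n-1} \subseteq f_n$. The base cases $n = 1, 2, 3$ are the explicit computations already displayed. One must also verify the dual clause for $g_n$ by the analogous two-way split of its recursion, which is routine given the structural decomposition above.

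\textbf{Main obstacle.} The delicate point is \emph{not} the peeling argument but making the simultaneous hypothesis for $g_n$ precise enough that the two recursion branches of $f_n$ — the $2Z\otimes(\cdots)$ branch and the $X\otimes f_{n-1}$ branch — between them capture \emph{every} admissible word and introduce \emph{no} spurious constraint. Concretely, I expect the real work is checking that the sub-expression $(X \otimes Y + Y \otimes X)^{n-1}X$ appearing after the leading $Z$ generates precisely the words that can legally follow an initial $Z$ (first non-$X$ letter must be $Y$, thereafter alternation holds, ends with $Z\otimes X(\ell)$), with no omissions — in other words, proving the $g$-clause is the crux, and the $f$-clause then falls out. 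A secondary bookkeeping nuisance is handling the boundary blocks $X(k_0)$ and $X(k_{2m})$ uniformly with the interior blocks; stating admissibility via the explicit structural decomposition $X(k_0)\otimes Z\otimes X(k_1)\otimes Y\otimes\cdots\otimes Z\otimes X(k_{2m})$ from the outset is what keeps this manageable.
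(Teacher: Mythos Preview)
Your inductive peeling for words beginning with $X$ is exactly what the paper does. Where you diverge is in the case $W = Z \otimes V$: you propose carrying a simultaneous induction on $g_n = Z \otimes (X\otimes Y + Y\otimes X)^n X$ and characterising which tails $V$ it generates. That works, but the paper bypasses this entirely with a direct construction. Given the structural form
\[
W = Z \otimes X(k_1) \otimes Y \otimes X(k_2) \otimes Z \otimes \cdots \otimes Y \otimes X(k_{p-1}) \otimes Z \otimes X(k_p),
\]
the paper simply writes down, position by position, which factor of each $(X\otimes Y + Y\otimes X)$ to pick so that the resulting product of adjacent letters is $W$: namely
\[
(X\cdot X)\otimes \underbrace{(YX)\otimes\cdots\otimes(YX)}_{k_1}\otimes(YY)\otimes\underbrace{(XY)\otimes\cdots\otimes(XY)}_{k_2}\otimes(XX)\otimes\cdots,
\]
and observes that between every pair of consecutive $\otimes$ signs one sees either $\ldots X)\,(Y\ldots$ or $\ldots Y)\,(X\ldots$, with an $X$ at each end, so this is literally one monomial in the expansion of $X(X\otimes Y + Y\otimes X)^n X$. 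The relations $X^2=2Z$, $Y^2=Y$, $XY=X$ then collapse it to $2^{\#(Z)}W$.

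This explicit factorisation dissolves what you flagged as the main obstacle: there is no need to prove anything about the full support of $(X\otimes Y + Y\otimes X)^{n-1}X$ or to maintain a second induction hypothesis. Your approach is sound and would go through, but the paper's is shorter and avoids the auxiliary machinery; in fact the same explicit construction would also handle the $X$-starting case directly (prepend $k_0$ copies of $(YX)$ before the first $(XX)$), so even the induction you share with the paper is, strictly speaking, dispensable.
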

\begin{proof}
 Let, $W=X_1 \otimes \cdots \otimes X_{n}$ be an expression where $X_i \in \{ X,Y,Z\}$. Then we define the length of $W$ to be $\ell(W) := n-1$. Then for each term $W$ appearing in the expansion of $X(X \otimes Y + Y \otimes X)^{n}X$ has $\ell(W)=n$.

 Assume $W$ is a word satisfying the conditions (1) to (5). By condition (2) we need to consider two cases separately: 
 \[ W = X \otimes W_{1}, \, \, \, \, \text{or} \, \, \, \, W= Z \otimes W_{1}.\] 
 where $\ell(W_1)=n-1$.
 
 In the first case, we note that the expression $W_1$ also satisfies the conditions (1) to (5). Then a simple induction on the $\ell(W)$ implies that $W= X \otimes W_{1}$ appears in: 
 \[ X(X \otimes Y + Y \otimes X)^{n} X = 2Z \otimes (X \otimes Y + Y \otimes X)^{n-1}X + X \otimes X (X \otimes Y + Y \otimes X)^{n-1}X \] the second expansion $X \otimes X (X \otimes Y + Y \otimes X)^{n-1}X$.

 In the second case, we can express $W$ in the following form 
 \[ W= Z \otimes X(k_1) \otimes Y \otimes X(k_2) \otimes Z \otimes \cdots  \otimes Y \otimes X(k_{l-1})\otimes Z\otimes X(k_p),\] 
 for some non-negative numbers $k_1,k_2, \cdots ,k_{p}$ and $p$ satisfying 
 \[ k_1+ \cdots + k_{p}+p=n+1\] To show the existence of $W$ in the expansion of $X(X \otimes Y + Y \otimes X)^{n} X$ we look at the following expression
\begin{align*}
    (X.X) \otimes \underbrace{(YX) \otimes \cdots \otimes (YX) }_{k_{1} \, \, \text{times}} \otimes (YY) \otimes \underbrace{(XY) \otimes \cdots \otimes (XY)}_{k_{2} \, \, \text{times}} \otimes (XX) \\
    \otimes \cdots \otimes (Y.Y) \otimes \underbrace{(X.Y) \otimes \cdots \otimes (XY)}_{k_{p-1}  \, \, \text{times}} \otimes (XX) \otimes \underbrace{(YX) \otimes \cdots \otimes (YX) }_{k_{p} \, \, \text{times}}.
\end{align*}

\noindent We note that for each tensor product $\otimes$ the two adjacent terms are $X$ and $Y$ or their reverse order. Also the expression starts and ends with an $X$. 
Hence, we can conclude that the expression must appear in $X(X \otimes Y + Y \otimes X)^{n} X$. Using the relationships between $X,Y$ and $Y$, we notice that, this is exactly equal to $ 2^{\#(Z)} \cdot W $. This concludes our proof.
\end{proof}
\noindent We can use our understanding of $v_{G_n}$, where $G_n$ is the graph in Figure \ref{fig:pp8-1} to say something about the perfect matchings of the Aztec diamond $AD(n)$. For ease we show $\ad(2)$ and $\ad(3)$ with the distinguished vertices marked in red dots in Figure \ref{fig:pp12-1}.

\begin{figure}[!htb]
\begin{center}
    \scalebox{0.85}{\includegraphics{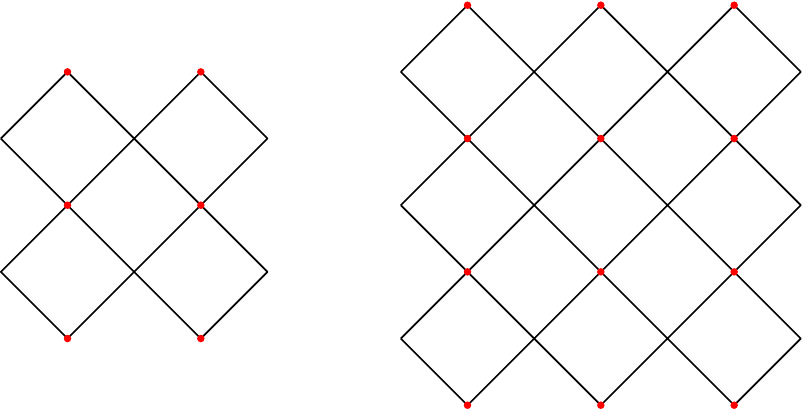}}
\caption{The graph $\ad(2)$ (left) and $\ad(3)$ (right) with the distinguished vertices marked.}
\label{fig:pp12-1}
\end{center}
\end{figure}

We can represent the state sum decomposition of $AD(n)$, $v_{AD(n)}$ as a formal sum of $n \times (n+1)$ matrices with entries in $ \{ y,n \}$. The number $M(AD(n))$ is encoded by the coefficient of the matrix where each entry is $n$, signifying a valid perfect matching. 

\begin{figure}[!htb]
\begin{center}
    \scalebox{0.85}{\includegraphics{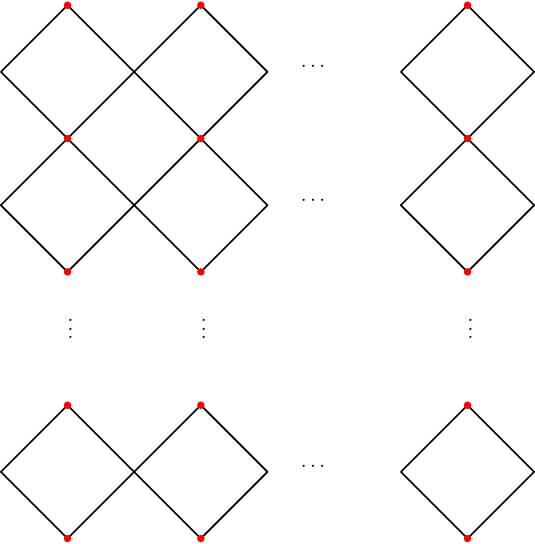}}
\caption{The graph $\ad(n)$ with the distinguished vertices marked.}
\label{fig:pp12-2}
\end{center}
\end{figure}

We now find the column-wise conditions on $X$'s, $Y$'s, and $Z$'s that ensures the presence of the vector \[ \begin{bmatrix}  n& n & \cdots & n & n
\end{bmatrix}^{t}.\] For the convenience of writing we shall investigate the row-wise conditions on $X^{t}, Y^{t}$ and $Z^{t}$ that produces the row vector \[
\begin{bmatrix}
    n& n & \cdots & n & n
\end{bmatrix}_{1 \times (n+1)}
\]
\noindent We set up our notation first. We shall use $X[k]$ to denote the following expression: \[ X[k] := \underbrace{X^{t} \cdot X^{t} \cdots X^{t} \cdot X^{t}}_{k \, \, \text{times}}  ,\]
that is, $X[k]$ is a formal sum of $(k+1) \times 1$ row vectors with entries in $\{ y, n \}$ and the operation $\cdot$ denotes the internal multiplication with the second entry of the first term with the first entry of the second term. For instance,
\begin{align*}
    X[1] &= \begin{bmatrix} y & n \end{bmatrix} + \begin{bmatrix} n & y \end{bmatrix}, \\ 
    X[2] &= (\begin{bmatrix} y & n \end{bmatrix} + \begin{bmatrix} n & y \end{bmatrix}) \cdot (\begin{bmatrix} y & n \end{bmatrix} + \begin{bmatrix} n & y \end{bmatrix}) \\ 
    &= \begin{bmatrix}
        y & (n \cdot y) & n
    \end{bmatrix} + \begin{bmatrix}
        y & (n \cdot n) & y
    \end{bmatrix} + \begin{bmatrix}
        n & (y \cdot y) & n
    \end{bmatrix}+ \begin{bmatrix}
        n & (y \cdot n) & y
    \end{bmatrix} \\
    &= \begin{bmatrix}
        y & y & n
    \end{bmatrix} + \begin{bmatrix}
        n & y & n
    \end{bmatrix} + \begin{bmatrix}
        n & y & y
    \end{bmatrix} \, \, (\text{the second term vanishes as} \, \, n \cdot n= n^2=0).
\end{align*}


\begin{lemma}\label{lem12}
The row vector $ \begin{bmatrix}  n& n & \cdots & n & n
\end{bmatrix}_{1 \times (n+1)}$  appears if and only if the following conditions on $\{X^{t} , Y^{t}, Z^{t} \}$ are satisfied
\begin{enumerate}
    \item Each term must have at least one $Z^{t}$,
    \item Each term starts with $X^{t}$ or $Z^{t}$,
    \item Each term ends with $X^{t}$ or $Z^{t}$,
    \item In general, for each term there exists a $k \geq 0$ such that it must start with $X[k] \otimes Z^{t}$. By symmetry there exists a $\ell \geq 0$ such that it end with $Z^{t} \otimes X[\ell]$, and 
    \item $Z^{t}$ and $Y^{t}$ occur alternatively.
\end{enumerate}
If the positions of the $X^t, Y^t$ and $Z^t$ satisfies the aforementioned conditions then the coefficient of \[ \begin{bmatrix}  n& n & \cdots & n & n
\end{bmatrix}_{1 \times (n+1)}\] is given by $1$. 
\end{lemma}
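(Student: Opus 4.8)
The plan is to translate each of the conditions (1)--(6) of the row-wise picture into a statement about the transposed variables $X^{t}, Y^{t}, Z^{t}$ and the internal-multiplication product along a single row of $\ad(n)$, and then to recognise that the resulting necessary and sufficient conditions are precisely the combinatorial shadow of the conditions already proved for $X(X\otimes Y+Y\otimes X)^{n}X$ in Lemmas \ref{lem5}--\ref{lem11}. Concretely, I would first record the local dictionary for how a single cell of $\ad(n)$ contributes: a $Z^{t}$ (i.e.\ the column vector $[n\ n]^{t}$) means the cell is doubly covered, a $Y^{t}$ (i.e.\ $[y\ y]^{t}$) means it is uncovered, and an $X^{t}$ means it is covered by exactly one edge, with the direction of that edge recorded by which of the two entries is $n$. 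Then a row of the target matrix $[n\ \cdots\ n]_{1\times(n+1)}$ arises exactly when, after performing all the internal multiplications $\cdot$ dictated by the connected-sum structure of the row, every one of the $n+1$ boundary slots carries an $n$; the relations $Y^{2}=Y$, $XY=YX=X$, $X^{2}=2Z$, $XZ=ZX=0$ govern which adjacent pairs are compatible.

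Next I would verify the ``only if'' direction: if the all-$n$ row vector appears, then (1) holds because a row with no $Z^{t}$ would contain only $X^{t}$'s and $Y^{t}$'s, and such a word either produces a $y$ somewhere (a $Y^{t}$ forces an adjacent $y$ that no neighbour can kill) or telescopes to a single $X^{t}$-type word of the wrong length; (2) and (3) hold because the two extreme boundary slots are governed by the first (resp.\ last) letter, and only $X^{t}$ or $Z^{t}$ can put an $n$ there after the capping by the Lemma \ref{lem1} graph; (4) is the refinement of (2)--(3) obtained by noting that a maximal initial run of $X^{t}$'s must be terminated by a $Z^{t}$ for the internal products not to collapse (using $X^{2}=2Z$ and $XZ=0$); and (5) is forced by $n\cdot n = n^{2}=0$, which is exactly the statement that two $Z^{t}$'s cannot be adjacent, together with the fact that only a $Y^{t}$ (via $YZ$-type adjacency) can legally sit between two $Z^{t}$'s while keeping the boundary entries $n$. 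For the ``if'' direction I would argue that any word in $X^{t}, Y^{t}, Z^{t}$ obeying (1)--(5) does, upon carrying out the internal multiplications, produce the all-$n$ row vector with coefficient $2^{\#(Z)}/2^{\#(Z)} = 1$ once we quotient by the overall normalisation, mirroring the factorisation used in the proof of Lemma \ref{lem11}; the key computation is the model word
\[
(X^{t}\cdot X^{t})\otimes\underbrace{(Y^{t}X^{t})\otimes\cdots}_{k_{1}}\otimes (Y^{t}Y^{t})\otimes\cdots\otimes\underbrace{\cdots\otimes(Y^{t}X^{t})}_{k_{p}},
\]
which one checks entry-by-entry equals $2^{\#(Z)}$ times the all-$n$ vector, exactly as in Lemma \ref{lem11}.

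For the final coefficient claim I would then observe that the coefficient of the all-$n$ row vector, \emph{before} any global rescaling, is $2^{\#(Z^{t})}$ by the same induction that proved Lemma \ref{lem10}; but the statement of the lemma asserts the coefficient is $1$, so I read this as the assertion that the all-$n$ vector, \emph{when it appears for a word satisfying (1)--(5)}, appears with multiplicity one \emph{per such word} (the $2^{\#(Z)}$ being the weight carried on the matrix side, i.e.\ the $2$-enumeration weight). Thus the cleanest formulation is: the map ``word satisfying (1)--(5)'' $\mapsto$ ``contribution to the all-$n$ row'' is injective, and each contributes coefficient $1$ to that fixed target vector after stripping the $2^{\#(Z)}$ factor that Lemma \ref{lem10} isolates. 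I expect the main obstacle to be the bookkeeping in the ``if'' direction --- namely making fully precise the claim that \emph{every} admissible word is realised (not just that realised words are admissible), since this is where one must reconstruct, from a prescribed pattern of $X$'s, $Y$'s and $Z$'s, an actual sequence of choices in the $(X\otimes Y+Y\otimes X)$ factors that yields it; this is the row-wise analogue of Lemma \ref{lem11} and, as there, the model-word display above is the device that makes the reconstruction explicit, so the proof should go through by an induction on the length of the word identical in structure to the one in Lemma \ref{lem11}.
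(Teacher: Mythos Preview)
Your proposal conflates two computations that happen to produce the same five structural conditions but are logically independent. Lemmas \ref{lem5}--\ref{lem11} analyse the expansion of $X(X\otimes Y + Y\otimes X)^{n}X$ in the algebra with relations $X^{2}=2Z$, $XY=X$, $XZ=0$; they tell you which words in $\{X,Y,Z\}$ occur as terms of that expansion, and with what coefficient. Lemma \ref{lem12} asks a separate question: given a \emph{fixed} word $W$ in $\{X^{t},Y^{t},Z^{t}\}$, viewed as an internal product of the $1\times 2$ row vectors $[y\ n]+[n\ y]$, $[y\ y]$, $[n\ n]$, when does the all-$n$ row $[n\ n\ \cdots\ n]$ appear as a summand of $W$, and with what multiplicity? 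The paper proves this by a direct entry-by-entry chase: if $W$ begins with $Y^{t}=[y\ y]$ the leading $y$ can never be turned into $n$, giving (2); if it begins with $X^{t}$ only the summand $[n\ y]$ can contribute, and its trailing $y$ forces the next letter to begin with $n$, hence to be $X^{t}$ or $Z^{t}$, and iterating forces a $Z^{t}$ eventually, giving (1) and (4); the same local chase around each $Y^{t}$ gives (5). For sufficiency and the coefficient-$1$ claim the paper exhibits the \emph{unique} choice of summand in each $X^{t}$ (namely $[n\ y]$ on stretches heading toward the next $Z^{t}$ and $[y\ n]$ on stretches coming away from one) that yields all $n$'s.

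Your argument instead tries to import Lemmas \ref{lem5}--\ref{lem11}, but those lemmas neither imply nor are implied by Lemma \ref{lem12}; the whole point of Theorem \ref{thm:match-asm-1} is that the \emph{same} conditions arise independently on the two sides and can then be matched. Concretely: your ``only if'' sketch invokes $X^{2}=2Z$ and $XZ=0$, but those are identities among the column vectors $X,Y,Z$ and are not what governs the internal product of $X^{t},Y^{t},Z^{t}$ here; the reference to ``capping by the Lemma \ref{lem1} graph'' is likewise from the other computation. Your model word for the ``if'' direction is lifted from Lemma \ref{lem11} (products $X\cdot X$, $Y\cdot X$ in the $\{X,Y,Z\}$-algebra) rather than the required explicit choice of $[n\ y]$ versus $[y\ n]$ in each $X^{t}$. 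Finally, the coefficient discussion is off: there is no $2^{\#(Z)}$ to strip, because in this lemma $Z^{t}=[n\ n]$ is a single term, not the square of anything; the coefficient $1$ is exactly the statement that the choice of $X^{t}$-summands above is forced uniquely once (1)--(5) hold.
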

\begin{proof}
 Any expression $W$ of length $n$ satisfying the conditions (1) to (5) can be expressed in the following form
\[ W= X[k_1] \cdot Z^{t} \cdot X[k_2] \cdot Y^{t} \cdot X[k_3] \cdot Z^{t} \cdots Y^{t} \cdot X[k_{l-1}] \cdot Z^{t} \cdot X[k_{k_{l}}] \]
We first prove the ``if" part of the statement:
if $W$ is a general expression as above then we look at the following term in $W$ 
\begin{multline}\label{eq00}
    \underbrace{\begin{bmatrix} n & y \end{bmatrix} \cdots \begin{bmatrix} n & y \end{bmatrix}}_{k_{1} \, \, \text{times}} \cdot \begin{bmatrix} n & n \end{bmatrix} \cdot \underbrace{\begin{bmatrix} y & n \end{bmatrix} \cdots \begin{bmatrix} y & n \end{bmatrix}}_{k_2 \, \, \text{times}} \cdot \begin{bmatrix} y & y \end{bmatrix} \cdot \underbrace{\begin{bmatrix} n & y \end{bmatrix} \cdots \begin{bmatrix} n & y \end{bmatrix}}_{k_{3} \, \, \text{times}} \cdot  \\ \begin{bmatrix} n & n \end{bmatrix} \cdots \begin{bmatrix} y & y \end{bmatrix} \cdot \underbrace{\begin{bmatrix} n & y \end{bmatrix} \cdots \begin{bmatrix} n & y \end{bmatrix}}_{k_{l-1} \, \, \text{times}} \cdot \begin{bmatrix} n & n \end{bmatrix} \cdot \underbrace{\begin{bmatrix} y & n \end{bmatrix} \cdots \begin{bmatrix} y & n \end{bmatrix}}_{k_l \, \, \text{times}}
\end{multline} 
Now by virtue of the relation \[ yn =ny = n,\] in the matching algebra $\mathcal{M}$ we conclude the expression is equal to \[
\begin{bmatrix}
    n& n & \cdots & n & n
\end{bmatrix}_{1 \times (n+1)}.\]

\noindent Conversely, let $W$ be an expression with $ \{ X^{t}, Y^{t} , Z^{t} \} $ of length $n$ that produces the row vector \[\begin{bmatrix}
    n& n & \cdots & n & n
\end{bmatrix}_{1 \times (n+1)}.\]
In order to produce the row vector $\begin{bmatrix}
    n& n & \cdots & n & n
\end{bmatrix}_{1 \times (n+1)}$ out of $ \{ X^{t}, Y^{t} , Z^{t} \} $, we can not start or end with $Y^t= \begin{bmatrix}
    y & y
\end{bmatrix}$. This is simply because we can not get rid of the initial $y$ in the former case or the final $y$ in the later case. Thus each term must start with $X^{t}$ or $Y^{t}$. This proves the necessity of properties (2) and (3).

Next we prove the necessity of conditions (1) and (4). If $W$ already starts with $Z^{t}$ there is nothing to prove. Without loss of generality we may assume $W$ starts with $X^{t}$. In this case, the vector $\begin{bmatrix}
    y & n
\end{bmatrix}$ can not contribute as the initial $y$ can not be turned to $n$. Thus, we may assume the first term in given by $\begin{bmatrix}
    n & y
\end{bmatrix}$. For the second term, it has to start with $n$. If not then the second entry shall remain $y$ by virtue of the relation \[y \cdot y = y^{2}=y \] \noindent Hence, the second term must either be $Z^{t}$ or $X^{t}$. If it is $Z^{t}$ , there is nothing to prove in this case. Otherwise, it must be $X^{t}$. As before, the only term that shall contribute is $\begin{bmatrix}
    n & y
\end{bmatrix}$. Continuing in this way, we see that $Z^{t}$ must be used eventually to turn the final $y$ of $X^{t}$ to $n$. This concludes the argument. The same argument starting from backwards proves the second part of condition (4).  

Finally, we prove that condition (5) is also necessary. Suppose there is a $Y^{t}$ in $W$. We shall show that there must exist $Z^{t}$ prior and subsequent to $Y^{t}$ with number of $X^{t}-$s in between. That is, $W$ must look like \[ W = \left( \cdots Z^{t} \cdot X[k] \cdot Y^{t} \cdot X[\ell] \cdot Z^{t} \cdots  \right), \]
for some non-negative integers $k$ and $\ell$. As $Y^{t} = \begin{bmatrix}
    y & y
\end{bmatrix}$, in order to turn the second entry of $Y^{t}$ to $n$ through internal multiplication, the next term must start with $n$. Hence, the term next to $Y^{t}$ must be a $X^{t}$ or a $Z^{t}$. If it is $Z^{t}$ then we take $\ell=0$ and we are done. If it is $X^{t}$, then the term that must contribute is given by $\begin{bmatrix}
    n & y
\end{bmatrix}$. We follow the same argument. In order to turn the $y$ to $n$ through internal multiplication, the following term must start with $n$. Thus, it is either $Z^{t}$ or $X^{t}$. If it is $Z^{t}$, then we take $\ell=1$ and we are done. If it is $X^{t}$, then the term that must contribute would be $\begin{bmatrix}
    y & n
\end{bmatrix}$. Continuing in this way, we see after $\ell$ many $X^{t}$ we must use $Z^{t}= \begin{bmatrix}
    n & n
\end{bmatrix}$ to turn the $y$ to $n$.

Using the same argument we conclude that there must exist a non-negative integer $k$ such that, $W$ must look like $\left( \cdots Z^{t} \cdot X[k] \cdot Y^{t} \cdots  \right)$. Combining these two observations, we conclude that the condition (5) is also necessary.

Finally, the position of $X^{t}, Y^{t} $ and $Z^{t}$ satisfies the conditions (1) to (5),  then \eqref{eq00} is the only way to produce $\begin{bmatrix}
    n& n & \cdots & n & n
\end{bmatrix}_{1 \times (n+1)}$ with coefficient $1$. 
\end{proof}

\noindent By using Lemmas \ref{lem5}--\ref{lem11} and substituting $ X \leftrightarrow 0$, $Z \leftrightarrow 1$ and $Y \leftrightarrow -1 $  we essentially get the following theorem.


\begin{theorem}\label{thm:match-asm-1}
   Let $N_{+}(A)$ denotes the number of  $1$-s in the alternating sign matrix $A$ and $\mathcal{A_n}$ denotes the set of all $n \times n$ ASMs. Then we have the following relation
   \[ M(AD(n))= \sum_{A \in  \mathcal{A}_n} 2^{N_{+}(A)}.\] 
\end{theorem}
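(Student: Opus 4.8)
The plan is to establish a bijection between perfect matchings of $AD(n)$ and pairs consisting of an $n\times n$ ASM $A$ together with a choice of local configuration at each $+1$ entry, and then to read this off algebraically from the state sum decomposition $v_{G_n}$ computed via the matching algebra. The key link is the graph $G_n$ of Figure \ref{fig:pp8-1}, whose state sum $v_{G_n} = X(X\otimes Y + Y\otimes X)^n X$ has, by Lemmas \ref{lem5}--\ref{lem11}, exactly the terms $2^{\#(Z)}\cdot W$ where $W$ ranges over all words in $\{X,Y,Z\}$ of length $n$ satisfying conditions (1)--(6). Under the substitution $X\leftrightarrow 0$, $Z\leftrightarrow 1$, $Y\leftrightarrow -1$, conditions (1)--(4) say precisely that the word, read as a vector in $\{0,1,-1\}^{n+1}$, has entries summing to $1$ with the first and last nonzero entries equal to $+1$, and condition (5) says the nonzero entries alternate in sign starting and ending with $+1$ — in other words $W$ encodes exactly one admissible \emph{row} of an ASM (with its multiplicity $2^{\#(Z)} = 2^{(\text{number of }+1\text{'s in that row})}$).

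First I would make precise how $AD(n)$ is built as an iterated connected sum so that the Patching Lemma applies row by row. Concretely, one slices the rotated planar dual graph of $AD(n)$ (as in Figures \ref{fig:pp12-1}, \ref{fig:pp12-2}) horizontally: each of the $n$ rows of diamond-shaped cells, together with the $n+1$ interface vertices above it and the $n+1$ below it, is (a graph isotopic to) a copy of $G_n$ laid on its side, and consecutive rows are glued along their $n+1$ shared interface vertices. Thus $AD(n)$ is the connected sum of $n$ copies of this row-graph along the $n-1$ internal interfaces, and by iterated application of Lemma \ref{lem3} we get
\[
v_{AD(n)} = \phi\bigl(v_{\text{row}_1}\otimes \phi(v_{\text{row}_2}\otimes \cdots)\bigr),
\]
where each $v_{\text{row}_i}$ is the transpose-oriented state sum, i.e.\ the formal sum of $(n+1)\times 1$ vectors appearing in $X(X\otimes Y+Y\otimes X)^nX$ written with $X^t,Y^t,Z^t$. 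The quantity $M(AD(n)) = v_{AD(n)}(n,n,\dots,n)$ is the coefficient of the all-$n$ matrix.

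Next I would combine the two halves of the analysis. By Theorem \ref{thm:main} applied iteratively along the $n-1$ internal gluings, a choice of one term $W_i$ from row $i$ contributes to the all-$n$ coefficient exactly when, for each internal interface, the vertical compatibility $\{\epsilon,\bar\epsilon\}=\{y,n\}$ holds between the bottom labels of $W_i$ and the top labels of $W_{i+1}$; and Lemma \ref{lem12} identifies which column-compatible stacks of rows produce the all-$n$ target vector on each vertical line, with coefficient $1$. Translating conditions (1)--(5) of Lemma \ref{lem12} through $X\leftrightarrow 0, Z\leftrightarrow 1, Y\leftrightarrow -1$ shows the column constraints are exactly the ASM column axioms (column sums $1$, alternating signs, first/last nonzero $= +1$). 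Hence the terms of $v_{AD(n)}$ contributing to the all-$n$ coefficient are in bijection with matrices $A$ whose rows and columns both satisfy the ASM axioms, i.e.\ $A\in\mathcal{A}_n$, and each such $A$ comes with total coefficient $\prod_{\text{rows}} 2^{(\#+1\text{ in row})} = 2^{N_+(A)}$. Summing gives $M(AD(n)) = \sum_{A\in\mathcal{A}_n} 2^{N_+(A)}$.

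\textbf{Main obstacle.} The routine algebra (Lemmas \ref{lem5}--\ref{lem12}) is already in place; the real work is the \emph{geometric setup}: verifying that $AD(n)$ genuinely decomposes as the stated iterated connected sum of row-graphs isomorphic (as graphs with ordered distinguished vertices) to the $G_n$ of Figure \ref{fig:pp8-1}, with the interface vertices correctly matched up and with the "capping" by Lemma \ref{lem1} at the two ends of each row accounting for the slanted boundary of the diamond. One must check that the boundary cells of $AD(n)$ contribute the $X$-caps rather than extra structure, that the row-graph's state sum is literally $X(X\otimes Y+Y\otimes X)^nX$ and not a rotated or reflected variant that would scramble the correspondence, and — the subtlest point — that the alternation of "transpose" orientation between analyzing rows (Lemmas \ref{lem5}--\ref{lem11}) and columns (Lemma \ref{lem12}) is consistent, so that the row-multiplicities $2^{\#(Z)}$ are not double-counted against the column coefficient $1$. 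Once the dictionary between the combinatorial picture and the two lemma-families is pinned down, the theorem follows by bookkeeping.
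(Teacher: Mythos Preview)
Your proposal follows essentially the same route as the paper: the paper's own proof is the single sentence preceding the theorem, which says the result follows from Lemmas~\ref{lem5}--\ref{lem11} (together with Lemma~\ref{lem12} for the column constraints) under the substitution $X\leftrightarrow 0$, $Z\leftrightarrow 1$, $Y\leftrightarrow -1$. Your write-up spells out more carefully how the row analysis (Lemmas~\ref{lem5}--\ref{lem11}) and the column analysis (Lemma~\ref{lem12}) combine, and correctly flags the geometric decomposition of $AD(n)$ into row-strips with state sum $X(X\otimes Y+Y\otimes X)^nX$ as the one point that must be checked against the figures; this is exactly the gap the paper leaves implicit.
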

This is the same as Theorem \ref{thm:2-enum} and thus our proof of Theorem \ref{adm} is complete.

\begin{section}{Matching Algebra and ASMs}\label{sec:ad2}
    
    In Theorem \ref{thm:match-asm-1} we proved only one part of Theorem \ref{thm:2-enum}. In this section we prove the other part of Theorem \ref{thm:2-enum}, that is, the connection of $M(AD(n))$ with the $2$-enumeration of $(n+1) \times (n+1)$ ASMs.

\begin{lemma}\label{lem13}
The state sum decomposition of the inverted $Y$ shaped graph shown in Figure \ref{fig:lem13} with two distinguished vertices as indicated by the red dots in the figure is
$y \otimes y$. 
\end{lemma}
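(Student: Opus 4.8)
The plan is to analyze the inverted $Y$ shaped graph directly, exactly in the style of Lemmas \ref{lem1} and \ref{lem44}. First I would fix notation: let the two distinguished vertices be the two endpoints of the ``arms'' at the top (marked red in Figure \ref{fig:lem13}), and let the remaining vertices be the internal vertices of the graph — presumably the branch vertex and the tail vertex at the bottom of the inverted $Y$. Since the state sum decomposition is a sum over $\epsilon_1, \epsilon_2 \in \{y,n\}$ of $G(\epsilon_1,\epsilon_2)\,\epsilon_1 \otimes \epsilon_2$, the whole computation reduces to evaluating the four numbers $G(n,n)$, $G(n,y)$, $G(y,n)$, $G(y,y)$, where $G(\epsilon_1,\epsilon_2)$ counts perfect matchings of the subgraph obtained by deleting the distinguished vertices whose label is $y$.

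The key step is a parity/forcing argument on the non-distinguished vertices. I expect the inverted $Y$ to have an odd number of vertices in total (so that $G(n,n) = M(G) = 0$, contributing no $y$-free term), and more importantly the internal structure should force both distinguished vertices to be used: if either distinguished vertex is deleted, the remaining graph has a vertex — the tail or the branch vertex — that cannot be matched (or a parity obstruction appears), so $G(n,y) = G(y,n) = 0$. Finally, when \emph{both} distinguished vertices are deleted ($\epsilon_1 = \epsilon_2 = y$), the internal vertices form a graph (e.g. a single edge, or a path on an even number of vertices) admitting exactly one perfect matching, giving $G(y,y) = 1$. Hence $v_G = y \otimes y$, as claimed. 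I would present this as: (i) identify which vertex is ``forced'' in each of the four cases by looking at degrees and adjacencies in Figure \ref{fig:lem13}; (ii) conclude three of the four coefficients vanish; (iii) exhibit the unique matching of the internal vertices in the surviving case.

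The main obstacle is purely that the argument depends on the precise shape of the graph in Figure \ref{fig:lem13}, which I cannot see — so the real work is reading off the adjacency structure correctly and making sure the ``forcing'' claims are airtight (in particular checking there is genuinely a \emph{unique} internal matching in the $y \otimes y$ case, not several). Once the graph is pinned down, each case is a one-line check; there is no heavy machinery, and Lemma \ref{lem3} (the Patching Lemma) is not even needed here, though one could alternatively derive this result by viewing the inverted $Y$ as a connected sum of the graph in Lemma \ref{lem1} with an edge and applying $\phi_{I,J}$, which would give an independent verification.
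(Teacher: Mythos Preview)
Your proposal is correct and follows essentially the same approach as the paper: a direct forcing argument on the non-distinguished vertices. The paper's proof is slightly more concise in that it goes straight to the observation that the third corner vertex must be matched with the middle vertex (forcing both distinguished vertices to be free), rather than separately tabulating the four coefficients $G(\epsilon_1,\epsilon_2)$, but the underlying reasoning is identical.
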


\begin{figure}[!htb]
\begin{center}
    \scalebox{0.75}{\includegraphics{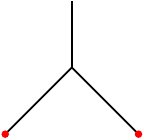}}
\caption{The $Y$ shaped graph in Lemma \ref{lem13}.}
\label{fig:lem13}
\end{center}
\end{figure}

\begin{proof}
    The graph has two distinguished corner vertices. The third corner vertex must be matched with the middle vertex. Hence, the state sum decomposition is given by $y \otimes y$.  
\end{proof}

\noindent Before we proceed further, we note that the number of perfect matchings of $\ad(n-1)$ is same as the number of perfect matching of the graph shown in Figure \ref{fig:pp15-1}, denoted by $\bd (n)$. This is because of the forced edges that must appear in any perfect matching.

\begin{figure}[!htb]
\begin{center}
    \scalebox{0.75}{\includegraphics{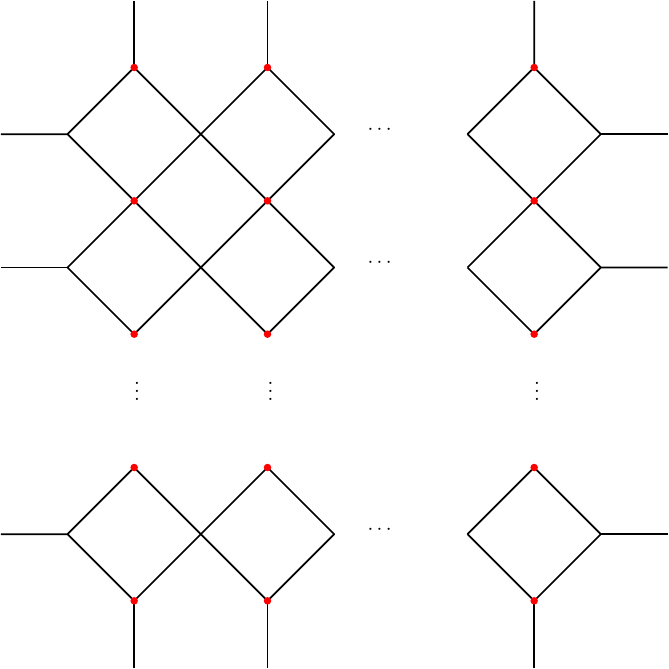}}
\caption{The graph $\bd(n)$, whose perfect matchings are equivalent to the prefect matchings of $\ad(n-1)$.}
\label{fig:pp15-1}
\end{center}
\end{figure}

We begin by calculating the state sum decompositions of the class of graphs shown in Figure \ref{fig:pp15-2}, called $\bar G_n$. To describe it concretely, it is the connected sum of $n$ $X$ shaped graphs, which are capped by the graph described in Lemma \ref{lem13} on both sides. By a repeated application of Lemmas \ref{lem3} and \ref{lem13} we see that the computation of the state sum decomposition boils down to evaluating the expression 
\[ Y (X \otimes Y + Y \otimes X) ^{n} Y .\]
We calculate the first few cases of $n$ to find some patterns, like we did in the previous section.

\begin{figure}[!htb]
\begin{center}
    \scalebox{0.75}{\includegraphics{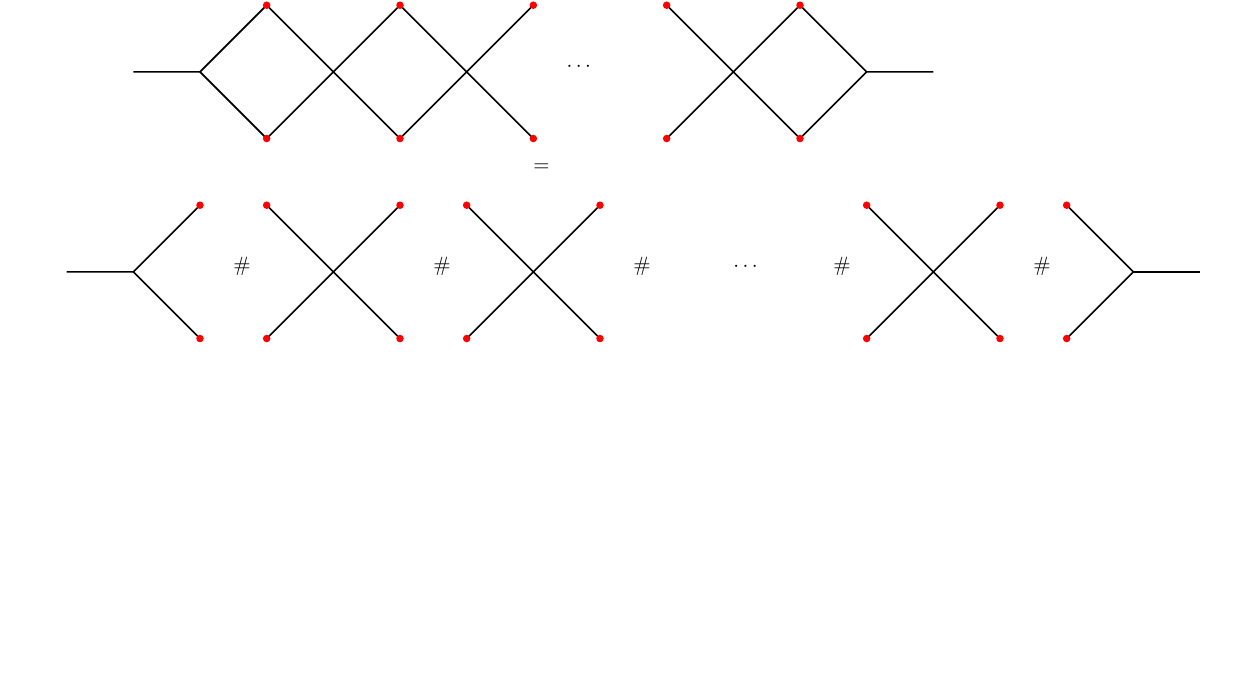}}
\caption{The graph $\bar G_n$.}
\label{fig:pp15-2}
\end{center}
\end{figure}

\noindent  For $n=1:$ (see top of Figure \ref{fig:pp15-3})
\begin{align*}
    Y (X \otimes Y + Y \otimes X)  Y= X \otimes Y + Y \otimes X.
\end{align*}

\noindent For $n=2:$ (see middle of Figure \ref{fig:pp15-3})
\begin{align*}
    Y (X \otimes Y + Y \otimes X) ^{2} Y= X \otimes X \otimes Y + X \otimes Y \otimes X + Y \otimes X \otimes X + 2 Y \otimes Z \otimes Y.
\end{align*}
\noindent For $n=3:$ (see bottom of Figure \ref{fig:pp15-3})
\begin{align*}
    Y (X \otimes Y + Y \otimes X) ^{3} Y &= X \otimes X \otimes X \otimes Y + 
    X \otimes X \otimes Y \otimes X + X \otimes Y \otimes X \otimes X \\&\quad + Y \otimes X \otimes X \otimes X  
  + 2 X \otimes Y \otimes Z \otimes Y + 2Y \otimes X \otimes Z \otimes Y\\ &\quad  + 2Y \otimes Z \otimes X \otimes Y+ 2Y \otimes Z \otimes Y \otimes X .
\end{align*}

\begin{figure}[!htb]
\begin{center}
    \scalebox{1.1}{\includegraphics{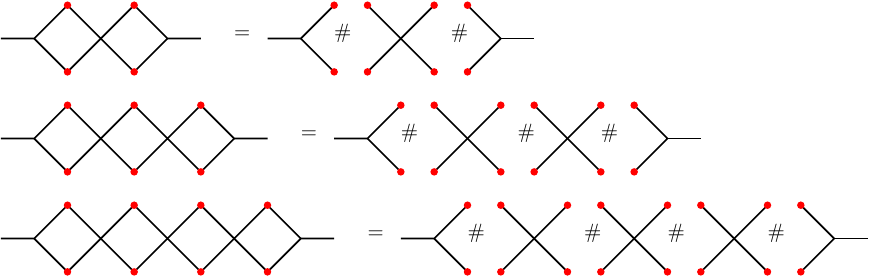}}
\caption{The graphs $\bar G_1$, $\bar G_2$ and $\bar G_3$.}
\label{fig:pp15-3}
\end{center}
\end{figure}

Based on these computations we make the following observations
\begin{enumerate}
     \item Each term must have at least one $Y$,
    \item Each term either starts with an $X$ or a $Y$, 
    \item By symmetry, each term ends with an $X$ or a $Y$.,
    \item In general, each term starts with $  \underbrace{X\otimes \cdots \otimes X}_{k \, \, \text{times}} \otimes Y \otimes \cdots $. By symmetry, each term ends with $ \cdots \otimes Y \otimes \underbrace{X\otimes \cdots \otimes X}_{\ell \, \, \text{times}} $ for some $k, \ell \geq 0$,
    \item The positions of $Y$ and $Z$ alternates, 
    \item The coefficient of each term is given by $2^{\#(Z)}$, and
    \item Any arrangement of $X$'s, $Y$'s and $Z$'s satisfying conditions (1) to (5) must appear in the expansion. 
\end{enumerate}
We shall prove these observations in the following set of lemmas. The alert reader can quickly recognise that by substituting $X \leftrightarrow 0$, $Y \leftrightarrow 1$ and $Z \leftrightarrow -1$ and by observations (1) to (6) each term defines a row of an ASM. 

\begin{lemma}\label{lem14}
    Each term in the expansion $Y (X \otimes Y + Y \otimes X) ^{n} Y$ must have at least one $Y$.
\end{lemma}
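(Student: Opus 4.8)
The plan is to mimic the inductive structure already used for Lemma~\ref{lem5}, but now with the capping vector $Y$ instead of $X$ and keeping track of how the leading coordinate behaves. First I would record the recursive expansion
\begin{align*}
    Y(X \otimes Y + Y \otimes X)^{n}Y &= Y(X \otimes Y + Y \otimes X)(X \otimes Y + Y \otimes X)^{n-1}Y \\
    &= (YX \otimes Y + Y^2 \otimes X)(X \otimes Y + Y \otimes X)^{n-1}Y \\
    &= (X \otimes Y + Y \otimes X)(X \otimes Y + Y \otimes X)^{n-1}Y,
\end{align*}
using $YX=X$ and $Y^2=Y$, so that $Y(X \otimes Y + Y \otimes X)^{n}Y = X \otimes Y(X \otimes Y + Y \otimes X)^{n-1}Y + Y \otimes X(X \otimes Y + Y \otimes X)^{n-1}Y$. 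To make the induction go through I would strengthen the statement: prove simultaneously that every term in the expansion of $Y(X \otimes Y + Y \otimes X)^{n}Y$ contains at least one $Y$ \emph{and} that every term in the expansion of $X(X \otimes Y + Y \otimes X)^{n}Y$ contains at least one $Y$.

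Next I would run the induction. The base cases $n=1,2,3$ are already displayed in the computations preceding the lemma, so they are verified. For the inductive step, split $Y(X \otimes Y + Y \otimes X)^{n}Y$ via the identity above into a piece $X \otimes Y(X \otimes Y + Y \otimes X)^{n-1}Y$ and a piece $Y \otimes X(X \otimes Y + Y \otimes X)^{n-1}Y$; in the first piece the inner factor already has a $Y$ by the induction hypothesis (first half of the strengthened statement), and tensoring on the left with $X$ cannot destroy it since these tensor slots are disjoint; in the second piece the leading $Y$ itself supplies the required $Y$. For the companion claim one uses the parallel expansion $X(X \otimes Y + Y \otimes X)^{n}Y = (X^2 \otimes Y + XY \otimes X)(X \otimes Y + Y \otimes X)^{n-1}Y = 2Z \otimes Y(X \otimes Y + Y \otimes X)^{n-1}Y + X \otimes X(X \otimes Y + Y \otimes X)^{n-1}Y$; in the first summand the explicit $Y$ in the second slot does the job, and in the second summand the induction hypothesis (second half) applies to the inner factor.

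The one point requiring a little care — and the main obstacle, such as it is — is being precise about why tensoring on the left with $X$, $Y$, or $Z$ never kills an existing $Y$ that occurs deeper in the word, i.e.\ that the internal multiplications performed by Lemma~\ref{lem3} act only between adjacent glued coordinates and leave the other tensor slots untouched. This is exactly the content of the Patching Lemma (Lemma~\ref{lem3}), so I would simply invoke it: in each of the products above, the factor $(X \otimes Y + Y \otimes X)$ or the capping vectors interact with the rest of the word only through a single coordinate, so any $Y$ sitting in a non-interacting slot of the inductively-constructed word survives verbatim into the full expansion. With that observation the induction closes and the lemma follows.
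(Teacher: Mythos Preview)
Your proof is correct and uses the same inductive expansion as the paper's own argument. The strengthened companion claim about $X(X\otimes Y+Y\otimes X)^{n}Y$ is unnecessary, however: in the split $Y(\cdots)^{n}Y = X\otimes Y(\cdots)^{n-1}Y + Y\otimes X(\cdots)^{n-1}Y$ the second summand already carries a leading $Y$, and the first summand reduces directly to the $(n-1)$ case of the lemma itself, so no auxiliary statement is needed.
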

\begin{proof}
    For $n=1, 2$ and $3$, the above computation shows that the lemma is true. For $n\geq 4$, we can use the following expansion
\begin{align*}
    Y(X \otimes Y + Y \otimes X)^{n}Y &= Y(X \otimes Y + Y \otimes X)(X \otimes Y + Y \otimes X)^{n-1}Y \\ 
                                    &= (X \otimes Y + Y \otimes X)(X \otimes Y + Y\otimes X)^{n-1} Y \\
                                    &= X \otimes Y(X \otimes Y + Y \otimes X)^{n-1}Y+Y \otimes X(X \otimes Y + Y \otimes X)^{n-1}Y
\end{align*}
Now a simple induction on $n$ shall imply that each expansions in the first term contain at least one $Y$. As the second expansion already starts with $Y$, we are done. 
\end{proof}

\begin{lemma}\label{lem15}
    Each term in the expansion $Y (X \otimes Y + Y \otimes X) ^{n} Y$ starts with either $X$ or $Y$.
\end{lemma}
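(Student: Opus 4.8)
The plan is to mimic the argument used for Lemma \ref{lem6}, exploiting the fact that in any application of the Patching Lemma the leftmost tensor factor is never involved in an internal multiplication. First I would record that the claim has already been verified for $n=1,2,3$ in the explicit computations preceding Lemma \ref{lem14}. For the inductive step (any $n\ge 2$) I would reuse exactly the expansion that appeared in the proof of Lemma \ref{lem14}:
\[ Y(X \otimes Y + Y \otimes X)^{n}Y = X \otimes Y(X \otimes Y + Y \otimes X)^{n-1}Y + Y \otimes X(X \otimes Y + Y \otimes X)^{n-1}Y. \]

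Then I would observe that every term arising from the first summand has the form $X \otimes W$ and every term arising from the second summand has the form $Y \otimes W'$, where $W$ ranges over the terms of $Y(X \otimes Y + Y \otimes X)^{n-1}Y$ and $W'$ over the terms of $X(X \otimes Y + Y \otimes X)^{n-1}Y$. Since all the internal multiplications introduced by further patching act only on the inner coordinates, the leftmost letter ($X$ in the first case, $Y$ in the second) is untouched. Hence every term of $Y(X \otimes Y + Y \otimes X)^{n}Y$ begins with an $X$ or a $Y$, which is the assertion.

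I do not expect a real obstacle here: the only point needing care is that the two summands cannot combine or cancel so as to produce a term with a different leading letter, and this is immediate because a term $X\otimes W$ and a term $Y\otimes W'$ are already distinguished by their first coordinate and no relation in $\mathcal{M}$ alters a lone leading letter. So the statement is essentially a formal consequence of the structure of the patching operation, and the induction amounts to bookkeeping. If one prefers, the same conclusion also follows from the $180^\circ$ rotational symmetry of $\bar G_n$ together with the companion "ends with $X$ or $Y$" statement, exactly as in Lemma \ref{lem7}, but I find the direct induction cleaner to write down.
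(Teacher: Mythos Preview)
Your proposal is correct and follows essentially the same approach as the paper: both invoke the base cases $n=1,2,3$ from the explicit computations and then appeal to the expansion
\[
Y(X \otimes Y + Y \otimes X)^{n}Y = X \otimes Y(X \otimes Y + Y \otimes X)^{n-1}Y + Y \otimes X(X \otimes Y + Y \otimes X)^{n-1}Y
\]
derived in the proof of Lemma~\ref{lem14}, observing that the leftmost tensor factor is visibly $X$ or $Y$ and is never touched by subsequent internal multiplications. Your additional remarks about non-cancellation and the alternative symmetry argument are fine embellishments but not needed; the paper's proof is even terser, simply saying the claim ``follows directly from the expansion used in the previous lemma.''
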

\begin{proof}
This is true for $n=1,2$ and $3$ as our computation shows. Now for general $n \geq 4$, this follows directly from the expansion used in the previous lemma.
\end{proof}
\begin{lemma}
    Each term in the expansion $Y (X \otimes Y + Y \otimes X) ^{n} Y$ either ends with $X$ or $Y$.
\end{lemma}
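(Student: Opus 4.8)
The plan is to reduce this statement to Lemma~\ref{lem15} by exploiting the symmetry of the graph $\bar G_n$, exactly as was done for Lemma~\ref{lem7} in the previous section. First I would observe that the graph $\bar G_n$ in Figure~\ref{fig:pp15-2}, being the connected sum of $n$ copies of the $X$-shaped graph capped by the inverted-$Y$ graph of Lemma~\ref{lem13} on both ends, is symmetric with respect to rotation by $180^\circ$ (equivalently, under the reflection that swaps the first distinguished vertex with the last, the second with the second-to-last, and so on). This is the same structural symmetry used in Lemma~\ref{lem7}, now applied to $\bar G_n$ instead of $G_n$.

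Next I would invoke the principle that the state sum decomposition $v_{\bar G_n} = Y(X\otimes Y + Y\otimes X)^n Y$ must be invariant under the permutation of tensor coordinates induced by this graph automorphism, since $\bar G_n(\epsilon_1,\dots,\epsilon_{n+2})$ counts perfect matchings of a subgraph that is carried to its mirror image by the symmetry. Concretely, if a term $\epsilon_1 \otimes \cdots \otimes \epsilon_{n+2}$ appears in the expansion with coefficient $c$, then the reversed term $\epsilon_{n+2} \otimes \cdots \otimes \epsilon_1$ also appears with the same coefficient $c$. (One can also see this directly from the algebraic expression: $Y(X\otimes Y + Y\otimes X)^n Y$ is palindromic because $Y$ is its own ``reverse'' and $X\otimes Y + Y\otimes X$ is symmetric under swapping the two tensor slots.)

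Then I would combine these two facts: by Lemma~\ref{lem15}, every term in the expansion starts with $X$ or $Y$; applying the reversal symmetry, every term therefore ends with $X$ or $Y$, which is exactly the claim. This completes the proof. I do not expect any real obstacle here — the only point requiring slight care is to state the symmetry of $\bar G_n$ precisely enough (identifying which distinguished vertex maps to which under the $180^\circ$ rotation) so that the induced coordinate permutation is genuinely the reversal of the tensor factors; once that is pinned down, the argument is a one-line consequence of Lemma~\ref{lem15}, mirroring the proof of Lemma~\ref{lem7} verbatim.
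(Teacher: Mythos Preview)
Your proposal is correct and follows essentially the same approach as the paper's own proof: invoke the $180^\circ$ rotational symmetry of $\bar G_n$ to conclude that its state sum expansion is invariant under reversal of the tensor factors, and then apply Lemma~\ref{lem15}. The paper states this in two sentences; your more detailed justification of why the symmetry induces the coordinate reversal is a welcome elaboration but not a different method.
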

\begin{proof}
   As the graph is symmetric with respect to the rotation by $180^\circ$, its state sum expansion is also symmetric. Using Lemma \ref{lem15} completes the proof. 
\end{proof}

Before we proceed to prove the next observation, we shall make the following observation. 

\begin{lemma}
    For all $n\geq 1$, we have the following identity 
    \[ Y(X \otimes Y + Y \otimes X)^{n}Y= (X \otimes Y + Y \otimes X)^{n}.\]
\end{lemma}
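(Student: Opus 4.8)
The plan is to prove the identity
\[
Y(X \otimes Y + Y \otimes X)^{n}Y = (X \otimes Y + Y \otimes X)^{n}
\]
by exploiting the algebraic relations $Y^{2}=Y$ and $XY=YX=X$ that were recorded earlier, together with the associativity of the internal multiplication used to form the products. The key point is that multiplying by $Y$ on the extreme left (respectively right) only affects the first (respectively last) tensor coordinate of each term, so it suffices to understand what the first and last coordinates of $(X \otimes Y + Y \otimes X)^{n}$ look like. First I would observe, by a direct induction on $n$ using the patching rule $(\epsilon_1 \otimes \epsilon_2)\cdot(\epsilon_3 \otimes \epsilon_4) := \epsilon_1 \otimes (\epsilon_2 \cdot \epsilon_3)\otimes \epsilon_4$, that every term appearing in the expansion of $(X \otimes Y + Y \otimes X)^{n}$ begins with either $X$ or $Y$ in its first coordinate and likewise ends with either $X$ or $Y$ in its last coordinate (no $Z$ can appear in the extreme coordinates, since $Z$ is produced only by the interior multiplication $X \cdot X = 2Z$).

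Granting that, the computation collapses coordinate-wise: on the left coordinate we have $Y\cdot X = X$ and $Y\cdot Y = Y$, and on the right coordinate $X\cdot Y = X$ and $Y\cdot Y = Y$, so in every term the outer $Y$'s are simply absorbed without changing the term or its coefficient. Hence $Y(X \otimes Y + Y \otimes X)^{n}Y$ equals $(X \otimes Y + Y \otimes X)^{n}$ term by term. To make this rigorous I would set it up as an induction on $n$: the base case $n=1$ is $Y(X\otimes Y + Y\otimes X)Y = YX\otimes YY + YY\otimes XY \cdot$ -- wait, more carefully, $Y(X\otimes Y)Y = (YX)\otimes(YY) = X\otimes Y$ and $Y(Y\otimes X)Y = (YY)\otimes(XY) = Y\otimes X$, giving the claim for $n=1$; for the inductive step write $(X\otimes Y + Y\otimes X)^{n} = (X\otimes Y + Y\otimes X)(X\otimes Y + Y\otimes X)^{n-1}$, push the left $Y$ through the first factor using $YX=X$, $YY=Y$, and then invoke the induction hypothesis on the remaining $Y(\cdots)^{n-1}Y$ after symmetrically handling the right $Y$.

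The main obstacle I anticipate is purely bookkeeping rather than conceptual: one must be careful that the internal multiplication at the seam between successive factors is associative and that ``multiplication by $Y$ on the left'' genuinely only touches the first coordinate of the product and does not interfere with the subsequent seams. This is exactly the content of Lemma \ref{lem3} (the Patching Lemma) applied with $I$ and $J$ singletons, so the argument is safe, but the notation in a fully spelled-out induction is somewhat heavy. A cleaner alternative, which I would mention, is to note that $Y$ acts as a two-sided identity on the subspace of $\mathcal{M}^{\otimes m}$ spanned by tensors whose first and last coordinates lie in $\{X,Y\}$ (i.e. are not $Z$); since $(X\otimes Y + Y\otimes X)^{n}$ lands in that subspace by the boundary observation above, sandwiching by $Y$ is the identity. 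This reduces the whole lemma to the single structural fact that no $Z$ appears in the extreme coordinates, which itself follows immediately from the fact that the leftmost factor contributes its leftmost letter ($X$ or $Y$) unchanged and the rightmost factor its rightmost letter unchanged.
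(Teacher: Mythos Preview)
Your argument is correct and rests on the same key relations $YX=X$ and $Y^{2}=Y$ that the paper uses, so the underlying idea is the same. The paper's execution, however, is noticeably more direct: rather than first expanding $(X\otimes Y + Y\otimes X)^{n}$ and then arguing that every term has an $X$ or $Y$ in its extreme coordinates, the paper simply absorbs each outer $Y$ into the \emph{adjacent factor} via associativity. Concretely, $Y(X\otimes Y + Y\otimes X) = YX\otimes Y + Y^{2}\otimes X = X\otimes Y + Y\otimes X$, so the left $Y$ disappears after touching only the first factor; symmetrically $(X\otimes Y + Y\otimes X)Y = X\otimes Y + Y\otimes X$ handles the right $Y$. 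This avoids any induction or structural claim about the full expansion, which is exactly the bookkeeping burden you flagged as the ``main obstacle.'' Your ``cleaner alternative'' in the last paragraph is heading toward this, but you can go further: you never need the subspace observation at all, because the absorption happens one factor at a time before anything is expanded.
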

\begin{proof}
The proof follows from the following series of calculations:
    \begin{align*}
       Y(X \otimes Y + Y \otimes X)^{n}Y &= Y(X \otimes Y + Y \otimes X)(X \otimes Y + Y \otimes X)^{n-1}Y \\
       &= (YX \otimes Y +  Y^{2} \otimes X)(X \otimes Y + Y \otimes X)^{n-1}Y\\
       &= (X \otimes Y + Y \otimes X) (X \otimes Y + Y \otimes X)^{n-1}Y \\
       &= (X \otimes Y + Y \otimes X)^{n} Y \\
       &= (X \otimes Y + Y \otimes X)^{n-1} (X \otimes Y + Y \otimes X)Y \\
       &= (X \otimes Y + Y \otimes X)^{n-1} (X \otimes Y^{2} + Y \otimes XY ) \\
       &= (X \otimes Y + Y \otimes X)^{n-1}(X \otimes Y + Y \otimes X) \\
       &= (X \otimes Y + Y \otimes X)^{n}.
   \end{align*}
\end{proof}
\begin{lemma}
    For each term in the expansion $(X \otimes Y + Y \otimes X)^{n}$, there exists a $k \geq 0$ so that it starts with $ \underbrace{X\otimes \cdots \otimes X}_{k \, \, \text{times}} \otimes Y \otimes \cdots $. By symmetry,  for each term, there exist a $k\geq 0$ such that it ends with $\cdots \otimes  Y \otimes \underbrace{X\otimes \cdots \otimes X}_{k \, \, \text{times}} $.
\end{lemma}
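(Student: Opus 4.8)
The plan is to prove this by induction on $n$, using the recursive expansion of $(X\otimes Y + Y\otimes X)^n$ that has already proven effective in the preceding lemmas. First I would record the base case: for $n=1$ the expansion is $X\otimes Y + Y\otimes X$, and each term indeed has the required shape ($k=0$ for the term $X\otimes Y$, and the boundary conditions hold vacuously or with $k=1$ for $Y\otimes X$ read from the left). For the inductive step, I would write
\[
(X\otimes Y + Y\otimes X)^n = X\otimes Y\otimes (X\otimes Y + Y\otimes X)^{n-2}(\cdots) + \cdots,
\]
but more cleanly peel off only one factor from the left: $(X\otimes Y + Y\otimes X)^n = (X\otimes Y + Y\otimes X)(X\otimes Y + Y\otimes X)^{n-1}$, and analyze how the leading $X$ or $Y$ of each term of $(X\otimes Y + Y\otimes X)^{n-1}$ interacts with the $Y$ or the $X$ coming from the prepended factor.

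The key steps, in order, would be: (1) set up the induction hypothesis that every term of $(X\otimes Y + Y\otimes X)^{n-1}$ starts with $X(k)\otimes Y\otimes\cdots$ for some $k\geq 0$; (2) expand $(X\otimes Y + Y\otimes X)\cdot W$ for such a term $W$, splitting into the case where $W$ begins with $X\otimes\cdots$ versus $Y\otimes\cdots$ — actually it is cleaner to split on the two summands $X\otimes Y\otimes W'$ and $Y\otimes X\otimes W'$ where $W'$ is a term of $(X\otimes Y + Y\otimes X)^{n-2}$; (3) use the algebra relations $XY=YX=X$, $Y^2=Y$, $X^2=2Z$, $XZ=ZX=0$, $YZ=ZY=Z$ to simplify the internal product at the gluing coordinate; (4) observe that in every surviving case the resulting term again begins with a (possibly longer or empty) block of $X$'s followed by a $Y$. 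The symmetric statement for the right end follows, as in Lemma~\ref{lem7}, from the $180^\circ$ rotational symmetry of the underlying graph $\bar G_n$, hence of its state sum decomposition, combined with the left-end statement just proven.

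The main obstacle I anticipate is bookkeeping at the gluing coordinate when the prepended $X$ (or $Y$) meets the leading letter of $W$: one must be careful that when the leading block of $X$'s in $W$ has length $k=0$, the term genuinely starts with $Y$, and that prepending $X$ then gives $X\otimes Y\otimes\cdots$ (so $k$ becomes $1$), while prepending $Y$ gives $Y\cdot Y = Y$ at that coordinate, keeping $k=0$ — and crucially that no term of the form $Y\otimes Y\otimes\cdots$ or $\cdots$ with two consecutive $Y$'s survives, which is exactly where the alternation observation (5) will later be needed but here only the weaker ``starts with $X$'s then a $Y$'' claim is required. A second subtlety is that when $k\geq 1$ the leading coordinate of $W$ is $X$, and prepending $X$ produces $X\cdot X = 2Z$ there, which would seem to break the pattern; but then the term becomes $Z\otimes(\text{rest})$, and one must check this falls under ``$k=0$ with the first letter being $Z$ rather than $X$''. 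In other words, the precise statement to carry through the induction should allow the term to start either with $X(k)\otimes Y$ or with $Z\otimes\cdots$; I would phrase the induction hypothesis accordingly so that the $X^2=2Z$ collision is absorbed into the ``starts with $Z$'' alternative, exactly mirroring how Lemma~\ref{lem8} was handled in Section~\ref{sec:ad1}.
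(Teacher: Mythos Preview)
Your overall strategy---induction on $n$, peeling one factor of $(X\otimes Y + Y\otimes X)$ from the left---is exactly what the paper does. But the ``second subtlety'' you raise is a phantom, and it stems from a misreading of how the internal multiplication works here. When you prepend the factor $(X\otimes Y + Y\otimes X)$ to a term $W$ of $(X\otimes Y + Y\otimes X)^{n-1}$, the \emph{first} slot of the prepended factor becomes the new leftmost coordinate, untouched; it is the \emph{second} slot that gets multiplied with the old first letter of $W$. Hence every resulting term begins with $X\otimes\cdots$ or $Y\otimes\cdots$, never $Z\otimes\cdots$. The collision $X\cdot X = 2Z$ you worry about can only occur at position~$2$ (from the summand $Y\otimes X$ meeting a leading $X$ in $W$), producing $Y\otimes Z\otimes\cdots$, which already satisfies the conclusion with $k=0$. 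So there is no need to widen the induction hypothesis to allow ``starts with $Z$''; you are conflating this lemma with Lemma~\ref{lem8}, where an \emph{outer} $X$ genuinely multiplies into the first coordinate.

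The paper's argument is correspondingly cleaner than what you sketch: it uses that $Y$ acts as the identity ($YX=X$, $Y^2=Y$, $YZ=Z$) to write
\[
(X\otimes Y + Y\otimes X)^{n} \;=\; X\otimes (X\otimes Y + Y\otimes X)^{n-1} \;+\; Y\otimes X(X\otimes Y + Y\otimes X)^{n-1},
\]
so the first summand is a clean prepend of $X$ (take $k=l+1$ by induction) and every term of the second summand starts with $Y$ (take $k=0$). Your step~(3), which proposes case-splitting on the leading letter of $W$ and tracking all the algebra relations, is more work than needed once you observe this.
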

\begin{proof}
    Our computation for $n=1,2$ and $3$ shows that the lemma is true. For $n \geq 4$ we may use induction: 
    \[(X \otimes Y + Y \otimes X)^{n}= X \otimes (X \otimes Y + Y \otimes X)^{n-1}+ Y \otimes X(X \otimes Y + Y \otimes X)^{n-1}.\]
    For any terms appearing in the second expansion: $Y \otimes (X \otimes Y + Y \otimes X)^{n-1}$, we choose $k=0$. For any term appearing in the first expansion: $X \otimes (X \otimes Y + Y \otimes X)^{n-1}$, it looks like $X \otimes W$ where $W$ is a term appearing in the expansion of $(X \otimes Y + Y \otimes X)^{n-1}$. Now for $W$, induction hypothesis shall ensure the existence of $l \geq$ such that $W$ starts with $X(l) \otimes Y$. So in this case, choose $k= l+1$. By symmetry, we are done with the other part of the lemma as well.  
\end{proof}
\begin{lemma}
    The position of $Y$ and $Z$ in the expansion $Y (X \otimes Y + Y \otimes X) ^{n} Y$ alternates.
\end{lemma}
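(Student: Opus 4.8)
The plan is to mirror the structure of Lemma~\ref{lem9} from the first half of the paper, adapting it to the $Y$-capped setting. Using the identity $Y(X \otimes Y + Y \otimes X)^{n}Y = (X \otimes Y + Y \otimes X)^{n}$ established in the previous lemma, it suffices to prove that the positions of $Y$ and $Z$ alternate in the expansion of $(X \otimes Y + Y \otimes X)^{n}$; that is, after every occurrence of a $Z$, there must occur a $Y$ before another $Z$ can appear. We verify the base cases $n=1,2,3$ directly from the explicit computations of $\bar G_1, \bar G_2, \bar G_3$ given above.

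For the inductive step I would proceed exactly as in Lemma~\ref{lem9}, setting up an auxiliary expansion so that the induction closes. Concretely, I would introduce the companion expression $Z \otimes (X \otimes Y + Y \otimes X)^{n}$ and state the induction hypothesis jointly: for all $n$, the positions of $Y$ and $Z$ alternate in both $Z \otimes (X \otimes Y + Y \otimes X)^{n}$ and $(X \otimes Y + Y \otimes X)^{n}$. After verifying the small cases for the $Z$-prefixed family as well, the inductive step follows from the two expansions
\begin{align*}
    Z \otimes (X \otimes Y + Y \otimes X)^{n} &= Z \otimes X \otimes (X \otimes Y + Y \otimes X)^{n-1} + Z \otimes Y \otimes X(X \otimes Y + Y \otimes X)^{n-1}, \\
    (X \otimes Y + Y \otimes X)^{n} &= X \otimes (X \otimes Y + Y \otimes X)^{n-1} + Y \otimes X(X \otimes Y + Y \otimes X)^{n-1}.
\end{align*}
In the first expansion, the first summand places an $X$ right after the leading $Z$, so no new adjacency of two $Z$'s is created there, while the second summand places a $Y$ right after the leading $Z$, which is exactly the required pattern; in both cases what follows is, up to the already-handled prefix, a term of one of the smaller expansions covered by the induction hypothesis. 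The same analysis applies to the second expansion. I would also use the previously proven structural lemma (each term starts with some $X(k) \otimes Y$) to guarantee that when we strip off a leading $X$ in the first summand of the second expansion, the remaining word again begins with a block of $X$'s followed by a $Y$, so no $Z$--$Z$ collision is introduced at the splice point.

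The main obstacle, as in Lemma~\ref{lem9}, is bookkeeping the splice point: when we write a term as $X \otimes W$ or $Z \otimes X \otimes W$ or $Z \otimes Y \otimes X \otimes W$, we must be sure that the newly prepended letters do not create a forbidden $Z$ immediately followed by $Z$ with no intervening $Y$. This is handled by noting that in every case the letter immediately preceding the recursively-controlled word $W$ is an $X$ or a $Y$ (never a $Z$ directly abutting another $Z$), so the alternation condition at the boundary is automatic, and the interior of $W$ is governed by the induction hypothesis. Assembling these observations completes the proof.
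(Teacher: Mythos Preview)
Your proposal mirrors Lemma~\ref{lem9}, but the paper takes a different route here. Rather than expanding from the left with a companion $Z$-prefixed family, the paper runs a single induction on $(X\otimes Y+Y\otimes X)^{n}$ and expands on the \emph{right}: a term of $(X\otimes Y+Y\otimes X)^{n-1}$ is written as $c\,W\otimes X$ or $c\,W\otimes Y$ according to its last letter, and one multiplies by a single factor $(X\otimes Y+Y\otimes X)$ on the right. The two identities
\[
(W\otimes X)(X\otimes Y+Y\otimes X)=2\,W\otimes Z\otimes Y+W\otimes X\otimes X,
\]
\[
(W\otimes Y)(X\otimes Y+Y\otimes X)=W\otimes X\otimes Y+W\otimes Y\otimes X,
\]
combined with the ``ends with $Y\otimes X(k)$'' structural lemma, make the alternation check local at the right edge and avoid any auxiliary family.

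Your scheme, as written, has a real gap: both of your displayed recursions produce the factor $X(X\otimes Y+Y\otimes X)^{n-1}$, which belongs to neither of your induction families $(X\otimes Y+Y\otimes X)^{m}$ and $Z\otimes(X\otimes Y+Y\otimes X)^{m}$. That leading $X$ is an \emph{internal multiplication}, not a tensor prefix; it replaces an initial $X$ by $2Z$ and an initial $Y$ by $X$, so you cannot ``strip off a leading $X$'' and invoke the hypothesis for $(X\otimes Y+Y\otimes X)^{n-1}$. The structural lemma you cite is about $(X\otimes Y+Y\otimes X)^{m}$, not about $X(X\otimes Y+Y\otimes X)^{m}$, so it does not control the splice either. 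The easy repair is to add $X(X\otimes Y+Y\otimes X)^{m}$ as a third family in the joint hypothesis and use
\[
X(X\otimes Y+Y\otimes X)^{m}=2Z\otimes(X\otimes Y+Y\otimes X)^{m-1}+X\otimes X(X\otimes Y+Y\otimes X)^{m-1},
\]
which then closes the loop; or, more economically, switch to the paper's right-expansion argument, which sidesteps the issue entirely.
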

\begin{proof}
    We know the lemma is true for $n=1,2$ and $3$ by our computations above. By dint of the previous lemma, we can simply ignore the extreme $Y$'s. For any $n\geq 4$, we use induction on $n$. 
    
   We know that any term in the expansion $(X \otimes Y + Y \otimes X)^{n-1}$ can be written as $c \cdot W \otimes X$ or $c \cdot W \otimes Y$, for some constant number $c$. Here, by induction hypothesis $W$ is a term with alternating $Y$ and $Z$. In addition, in the first case: $W \otimes X $, the previous lemma guarantees that $W$ must end with $Y \otimes X(k)$. By induction hypothesis, the second case: $W \otimes Y$, $W$ must end with $Z \otimes X(k)$.
   
    In the first case: $W \otimes X$ we observe the following
    \begin{align*}
        \left( W \otimes X \right) (X \otimes Y + Y \otimes X) &= W \otimes (X \cdot X) \otimes Y + W \otimes (X \cdot Y) \otimes X \\
        &= 2W \otimes Z \otimes Y + W \otimes X \otimes X.
    \end{align*}
    In the second case: $W \otimes Y$ we observe the following
    \begin{align*}
        \left(W \otimes Y\right)\cdot(X\otimes Y+Y \otimes X) &= W \otimes (Y \cdot X) \otimes Y + W \otimes (Y \cdot Y) \otimes X \\
        &= W \otimes X \otimes Y + W \otimes Y \otimes X.
     \end{align*}
  This concludes our induction step.  
\end{proof}
\begin{lemma}
    The coefficient of each term in the expansion $Y (X \otimes Y + Y \otimes X) ^{n} Y$ is given by $2^{\#(Z)}$. 
\end{lemma}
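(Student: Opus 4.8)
The plan is to reduce the claim to a pure algebra computation in $\mathcal{M}$ and then run an induction with a strengthened hypothesis. By the previous lemma, $Y(X\otimes Y+Y\otimes X)^{n}Y=(X\otimes Y+Y\otimes X)^{n}$, so it suffices to prove the statement for $P^{n}$, where I abbreviate $P:=X\otimes Y+Y\otimes X$. Writing $P^{n}=P\cdot P^{n-1}$ and using $X^{2}=2Z$, $XY=YX=X$, $Y^{2}=Y$, together with the fact that $Y$ is a left identity (indeed $Y\cdot X=X$, $Y\cdot Y=Y$, $Y\cdot Z=Z$), one extracts the two mutually recursive identities
\[ P^{n}=X\otimes P^{n-1}+Y\otimes Q_{n-1},\qquad Q_{n}=2Z\otimes P^{n-1}+X\otimes Q_{n-1}, \]
where $Q_{m}:=X\cdot P^{m}$ is an auxiliary quantity (it does not correspond to a graph, only to the result of prepending a single column $X$); the second identity follows from the first by multiplying on the left by $X$. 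It is convenient to set $P^{0}:=Y$ and $Q_{0}:=XY=X$, which makes both recursions valid for all $n\geq 1$, and which is transparently consistent with the displayed computations for $n=1,2,3$.

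The induction then proceeds on $n$ with the strengthened hypothesis: for every $m\geq 0$, each term in the fully reduced expansion of $P^{m}$, and each term in that of $Q_{m}$, has coefficient $2^{\#(Z)}$. The base case $m=0$ is immediate, since $P^{0}=Y$ and $Q_{0}=X$ are single terms with coefficient $1=2^{0}$ and no $Z$. For the inductive step I read off the two recursions term by term. Every term of $P^{n}$ is obtained either by prepending $X$ to a term of $P^{n-1}$ or by prepending $Y$ to a term of $Q_{n-1}$; in both cases neither the coefficient nor $\#(Z)$ changes, so by the hypothesis the coefficient is $2^{\#(Z)}$. Every term of $Q_{n}$ is obtained either from $2Z\otimes P^{n-1}$ — prepending a $Z$ and doubling the coefficient, which raises $\#(Z)$ by one and the coefficient from $2^{\#(Z)}$ to $2^{\#(Z)+1}$ — or from $X\otimes Q_{n-1}$, which changes nothing; either way the coefficient is again $2^{\#(Z)}$.

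The single point needing care — and the main, if minor, obstacle — is the same bookkeeping issue as in the proof of Lemma~\ref{lem10}: one must check that no mixing or further cancellation occurs when the two summands in each recursion are combined, so that ``the coefficient of a term'' is well defined. This is handled exactly as there. The terms coming from $X\otimes P^{n-1}$ all begin with the letter $X$ and those from $Y\otimes Q_{n-1}$ all begin with $Y$; the terms from $2Z\otimes P^{n-1}$ all begin with $Z$ and those from $X\otimes Q_{n-1}$ begin with $X$. Since terms beginning with different letters cannot be equal, the two families never interact, and within each family the expansion is already reduced by the induction hypothesis. Specialising the strengthened statement to $n\geq 1$ yields the lemma.
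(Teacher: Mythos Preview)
Your proof is correct and follows essentially the same inductive strategy as the paper: expand $P^{n}$ by peeling off one factor of $P$, track how coefficients and $\#(Z)$ change, and verify non-mixing by inspecting a distinguishing letter. The only differences are cosmetic: you work from the left and package the case analysis into an explicit auxiliary sequence $Q_{n}=X\cdot P^{n}$ with a mutual recursion, whereas the paper works from the right, splitting on whether the last letter of a term in $P^{n-1}$ is $X$ or $Y$ and checking non-mixing via the last two letters.
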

\begin{proof}
For $n=1,2$ and $3$, our computations show the lemma is true. Choose an arbitrary term $W$ in $(X \otimes Y + Y \otimes X)^{n-1}$. There are two cases to consider: $W= W_{1} \otimes X$ or $W= W_{1} \otimes Y$. By induction hypothesis, coefficient of $W$ is given by: 
    \[ 2^{ \#(Z) \, \, \text{in} \, \, W}= 2^{\#(Z) \, \, \text{in} \, \, W_1}.\]
    In the first case
    \begin{align*}
       W(X \otimes Y + Y \otimes X) &= \left( W_1 \otimes X \right) (X \otimes Y + Y \otimes X)\\ 
       &=2W_1 \otimes Z \otimes Y + W_1 \otimes X \otimes X
    \end{align*}
    The coefficient of $W_1 \otimes Z \otimes Y = 2 \cdot 2^{\#(Z) \, \, \text{in} \, \, W_1}= 2^{\#(Z) \, \, \text{in} \, \, W_1+1}=2^{\#(Z) \, \, \text{in} \, \, W_1 \otimes Z \otimes Y} $. Similarly, the coefficient of $W_1 \otimes X \otimes X= 2^{\#(Z) \, \, \text{in} \, \, W_1}=2^{\#(Z) \, \, \text{in} \, \, W_1\otimes X \otimes X}$.
    We have a similar argument for the second case:$W= W_1 \otimes Y$ as well.
    
    Finally, we note that none of the terms mix with each other. This is simply because in the first case, each terms must end with $Z \otimes Y$ and $X \otimes X$. Also, in the second case, each terms must with $X \otimes Y$ and $Y \otimes X$. This concludes our argument. 
\end{proof}
\begin{lemma}
    Any arrangement of $X$'s, $Y$'s and $Z$'s satisfying conditions (1) to (6) of length $n$ must appear in the expansion of $Y (X \otimes Y + Y \otimes X) ^{n} Y$.
\end{lemma}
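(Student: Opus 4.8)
The plan is to prove this by induction on $n$, mirroring the structure of the analogous Lemma \ref{lem11} from Section \ref{sec:ad1}, but now working with the expression $(X \otimes Y + Y \otimes X)^{n}$ (which by the identity above equals $Y(X \otimes Y + Y \otimes X)^{n}Y$) and with the roles of $Y$ and $Z$ swapped relative to the earlier argument. The base cases $n=1,2,3$ have already been verified in the explicit computations preceding the lemma. Given an arbitrary word $W = X_1 \otimes \cdots \otimes X_{n+1}$ with each $X_i \in \{X,Y,Z\}$ satisfying conditions (1)--(6), I would split on the first letter using condition (2): either $W = X \otimes W_1$ or $W = Y \otimes W_1$, where $W_1$ has length $n-1$ (in the length convention $\ell(W) = (\#\text{letters}) - 1$).

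First I would handle the case $W = X \otimes W_1$. Here one checks that $W_1$ still satisfies conditions (1)--(6) of length $n-1$: condition (1) is preserved because by condition (4) the initial block of $X$'s is followed by a $Y$, so deleting one leading $X$ cannot remove the unique-ness of $Y$'s presence, and the remaining conditions about starts/ends and alternation are inherited directly. Then by the induction hypothesis $W_1$ appears in the expansion of $(X \otimes Y + Y \otimes X)^{n-1}$ with coefficient $2^{\#(Z)\text{ in }W_1}$, and using the expansion
\[
(X \otimes Y + Y \otimes X)^{n} = X \otimes (X \otimes Y + Y \otimes X)^{n-1} + Y \otimes X(X \otimes Y + Y \otimes X)^{n-1},
\]
the term $X \otimes W_1$ arises from the first summand with the same coefficient; since $\#(Z)$ in $X \otimes W_1$ equals $\#(Z)$ in $W_1$, condition (6) is also matched.

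The second case $W = Y \otimes W_1$ is the substantive one and is the place where I expect the main obstacle. By condition (4), after the leading $Y$ and before the next $Y$ there is a $Z$ (or $W$ is short), so $W$ has the shape
\[
W = Y \otimes X(k_1) \otimes Z \otimes X(k_2) \otimes Y \otimes X(k_3) \otimes Z \otimes \cdots \otimes X(k_p),
\]
alternating $Y$'s and $Z$'s as forced by condition (5), with $k_1 + \cdots + k_p + (p-1) = n$. To realize $W$ I would exhibit a specific monomial in the expansion of $(X \otimes Y + Y \otimes X)^{n}$ whose coordinate-wise product collapses to $2^{\#(Z)} \cdot W$, in direct analogy with the explicit bracketed product displayed in the proof of Lemma \ref{lem11}: group the $n$ factors so that consecutive tensor slots receive the pairs $(Y,Y)$, $(X,Y)$ or $(Y,X)$, $(X,X)$ in the pattern dictated by the $k_i$, using $Y \cdot Y = Y$ to produce each internal $Y$, $X \cdot X = 2Z$ to produce each internal $Z$ (accounting for the factor $2^{\#(Z)}$), and $X \cdot Y = Y \cdot X = X$ to produce each $X$ in the $X(k_i)$ blocks, with the extreme slots handled by $Y \cdot Y = Y$ at each end. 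The delicate point is bookkeeping: verifying that such a grouping is actually a legitimate term of $(X \otimes Y + Y \otimes X)^{n}$ — i.e. that adjacent factors in the product are always an $X$ paired with a $Y$ in one order or the other, never $X$ with $X$ or $Y$ with $Y$ at the level of the chosen summands before multiplication — and that the length arithmetic $\sum k_i + (p-1) = n$ comes out exactly. Once this monomial is identified, distributivity of the internal multiplication $\phi$ (Lemma \ref{lem3}) guarantees it contributes, and the relations $X^2 = 2Z$, $Y^2 = Y$, $XY = YX = X$, $XZ = ZX = 0$ evaluate it to $2^{\#(Z)} W$, completing the induction and hence the lemma.
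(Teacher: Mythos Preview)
Your proposal is correct, but organized differently from the paper's proof. The paper does not induct at all: it writes every admissible word uniformly as
\[
W = X(k_1) \otimes Y \otimes X(k_2) \otimes Z \otimes X(k_3) \otimes Y \otimes \cdots \otimes Z \otimes X(k_{l-1}) \otimes Y \otimes X(k_l)
\]
(allowing $k_1 = 0$, which covers your Case~2) and exhibits in one stroke the monomial of $Y(X\otimes Y + Y\otimes X)^n Y$ that collapses to $2^{\#(Z)}W$, exactly as in the displayed product you describe. Your approach instead mirrors Lemma~\ref{lem11}: you peel off a leading $X$ by induction via the expansion $(X\otimes Y + Y\otimes X)^n = X\otimes(X\otimes Y + Y\otimes X)^{n-1} + Y\otimes X(X\otimes Y + Y\otimes X)^{n-1}$, and only invoke the explicit monomial when $W$ begins with $Y$. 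This is valid but redundant here, since the direct construction already handles $k_1 \geq 1$; what you gain is a tighter parallel with Section~\ref{sec:ad1}, what the paper gains is brevity. One small inaccuracy: your phrase ``extreme slots handled by $Y\cdot Y = Y$ at each end'' is not quite right on the right end, where the last letter of $W$ may be $X$ (so the relevant identity is $X\cdot Y = X$), but this does not affect the argument.
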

\begin{proof}
    An arrangement $W$ of $X$'s, $Y$'s and $Z$'s satisfying the conditions (1) to (6) must looks like
    \[ X(k_1) \otimes Y \otimes X(k_2) \otimes Z \otimes X(k_3)\otimes Y \otimes \cdots \otimes Z\otimes X(k_{l-1})\otimes Y \otimes X(k_{l}). \]
    We look at the following term in the expansion $Y(X \otimes Y + Y \otimes X)^{n}Y$
    \begin{align*}
        \underbrace{(YX)\otimes \cdots \otimes (YX)}_{k_1 \, \, \text{times}} \otimes (Y\cdot Y) \otimes \underbrace{(XY) \otimes \cdots \otimes (XY)}_{k_2 \,\, \text{times}} \otimes (X \cdot X) \otimes \underbrace{(YX) \otimes \cdots (YX)}_{k_3 \, \, \text{times}} \\\otimes 
        (Y \cdot Y) \otimes \cdots \otimes (X \cdot X) \otimes \underbrace{(YX) \otimes \cdots \otimes (YX)}_{k_{l-1} \, \, \text{times}} \otimes (Y \cdot Y) \otimes \underbrace{(XY) \otimes  \cdots \otimes (XY)}_{k_{l}\, \, \text{times}}.
    \end{align*}
    Clearly, the expression above appears in the expansion of $Y(X \otimes Y + Y \otimes X)^{n}Y$ and this is exactly equal to the constant times given term $W$. This completes the proof.
\end{proof}

\noindent As before, we can represent the state sum decomposition of $\bd(n)$, $v_{\bd(n)}$ as a formal sum of $n \times (n+1)$ matrices with entries in $\{ y,n\}$. By forcing of edges in the matchings, we have the following equality 
\[ M(\ad(n-1))= M (\bd(n)).\]
We shall find the coefficient of the matrix whose all entries are given by $n$. We next find the column-wise condition on $\{ X, Y, Z \}$ that ensures the presence of the vector $ \begin{bmatrix}
    y & n & \cdots & n & y \\
\end{bmatrix}^{t}$.  Note that in the column-wise condition the first and the last entry must be $y$ and rest of it must be $n$. This is because there is a forced matching vertically for each boundary vertices. We shall analyse the condition row-wise for ease of writing. We make the following observations.
\begin{lemma}\label{lem22}
The row vector $ \begin{bmatrix}
    y & n & \cdots & n & y \\
\end{bmatrix}_{1 \times (n+1)}$ appears if and only if the following conditions on $\{X^t, Y^t,Z^t \}$ are satisfied: 
\begin{enumerate}
    \item Each term must have at least one $Y^t$. 
    \item Each term starts with $X^t$ or $Y^t$. 
    \item Each term ends with $X^t$ or $Y^t$. 
    \item In general, for each term there exists a $k \geq 0$ such that it must start with $X[k] \otimes Y$. By symmetry there exists a $k \geq 0$ such that it end with $Y \otimes X[k]$.  
    
    \item $Z$ and $Y$ occur alternatively. 
\end{enumerate}
If the position of $X, Y$ and $Z$ satisfy the aforementioned conditions then the coefficient of $ \begin{bmatrix}
    y & n & \cdots & n & y \\
\end{bmatrix}^{t}$ is given by $1$. 
\end{lemma}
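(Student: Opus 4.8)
The statement is the exact analogue, for the graph $\bd(n)$, of Lemma \ref{lem12}, so the strategy is to mirror that argument with the target row vector $\begin{bmatrix} y & n & \cdots & n & y\end{bmatrix}_{1\times(n+1)}$ in place of the all-$n$ vector. First I would fix an arbitrary word $W = W_1 \cdot W_2 \cdots W_m$ in the letters $\{X^t, Y^t, Z^t\}$ of total length $n$ (so that the internal products produce a row vector of length $n+1$), and analyse which single summand of $W$ — each summand being a product of the $2\times 1$ blocks $\begin{bmatrix}y\\ n\end{bmatrix}, \begin{bmatrix}n\\ y\end{bmatrix}$ (from $X^t$), $\begin{bmatrix}y\\ y\end{bmatrix}$ (from $Y^t$), $\begin{bmatrix}n\\ n\end{bmatrix}$ (from $Z^t$) — can yield the target vector under the internal-multiplication rules $yn=ny=n$, $n^2=0$, $y^2=y$.

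For the ``if'' direction I would take $W$ of the form $X[k_1]\cdot Y^t\cdot X[k_2]\cdot Z^t\cdots Y^t\cdot X[k_\ell]$ dictated by conditions (1)--(5), and exhibit the distinguished summand: use $\begin{bmatrix}n\\ y\end{bmatrix}$ for the $X^t$'s immediately following the left endpoint, then alternate $\begin{bmatrix}y\\ n\end{bmatrix}$ / $\begin{bmatrix}n\\ y\end{bmatrix}$ between successive $Y^t$/$Z^t$ markers exactly as in \eqref{eq00}, choosing the first block to be $\begin{bmatrix}y\\ n\end{bmatrix}$ and the last block to be $\begin{bmatrix}n\\ y\end{bmatrix}$ so that the exposed first coordinate is $y$ and the exposed last coordinate is $y$, while every internal product collapses to $n$ via $yn=ny=n$ (and $Z^t\cdot Y^t$, $Y^t\cdot Z^t$ give $n$ too). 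This produces precisely $\begin{bmatrix} y & n & \cdots & n & y\end{bmatrix}$ with coefficient $1$, and counting shows no other summand of such a $W$ contributes the same vector, so the coefficient is exactly $1$.

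For the ``only if'' direction I would argue, as in Lemma \ref{lem12}, that the leftmost block cannot come from $Y^t=\begin{bmatrix}y & y\end{bmatrix}$ applied at the start \emph{with the wrong orientation}: here the subtlety is the reverse of Lemma \ref{lem12}, because now we \emph{want} a leading $y$, so the first letter being $Y^t$ is actually admissible, but then the exposed second coordinate $y$ of $Y^t$ forces the next block to begin with $n$, hence the next letter is $X^t$ or $Z^t$; chasing the forced $\begin{bmatrix}n & y\end{bmatrix}$, $\begin{bmatrix}y & n\end{bmatrix}$, $\ldots$ pattern shows a $Z^t$ must eventually occur, and symmetrically from the right end, giving conditions (1)--(4); the alternation condition (5) follows because after a $Y^t$ the exposed $y$ must be killed, forcing an eventual $Z^t$ before another $Y^t$, and after a $Z^t$ the exposed $n$ means the next $X^t$-run keeps a free $y$ which only a $Y^t$ (or another $Z^t$, but then the other coordinate stays $n$ and cannot later become the required terminal $y$) can absorb — so I would have to check that two consecutive $Z^t$'s with only $X^t$'s between them is incompatible with reaching a row vector whose interior is all $n$ \emph{and whose ends are $y$}.

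\textbf{Main obstacle.} The delicate point, and where this lemma genuinely differs from Lemma \ref{lem12}, is the bookkeeping at the two \emph{endpoints}: the target vector has $y$ at positions $1$ and $n+1$ but $n$ everywhere in between, so I must verify that the leading $X^t$-run uses the orientation $\begin{bmatrix}n\\ y\end{bmatrix}$ (to expose $y$ on the far left) while \emph{internally} collapsing, and likewise on the right with $\begin{bmatrix}y\\ n\end{bmatrix}$ — i.e. the two ends want opposite orientations from what one might naively expect, and I need the first and last markers to be $Z^t$ (equivalently $k$ in condition (4) measures an $X^t$-run that is \emph{outside} the first/last $Z^t$). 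Making the forced-matching reduction $M(\ad(n-1)) = M(\bd(n))$ precise enough to know exactly which $n+1$ vertices are distinguished, and hence exactly which boundary entries are forced to $y$, is the other thing I would pin down carefully before running the letter-by-letter argument.
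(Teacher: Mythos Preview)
Your high-level strategy --- mirror the proof of Lemma \ref{lem12} with the new target $\begin{bmatrix} y & n & \cdots & n & y\end{bmatrix}$ --- is exactly what the paper does, and your displayed form $W = X[k_1]\cdot Y^t\cdot X[k_2]\cdot Z^t\cdots Y^t\cdot X[k_\ell]$ is the right one. But your boundary bookkeeping is inverted, and this is not cosmetic: it contradicts the very conditions you are trying to establish. The leftmost exposed coordinate must be $y$, so the leading $X^t$-block has to be $\begin{bmatrix} y & n\end{bmatrix}$, not $\begin{bmatrix} n & y\end{bmatrix}$ as you write under ``main obstacle''. Its hanging $n$ then forces the next block to begin with $y$ (otherwise $n^2=0$ kills the term), so the next letter is either $X^t$ (again choosing $\begin{bmatrix} y & n\end{bmatrix}$) or $Y^t$ --- never $Z^t$. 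Hence the first non-$X$ marker is forced to be $Y^t$, which is exactly condition (4); your claim that ``the first and last markers [must] be $Z^t$'' is the opposite of what holds and contradicts your own displayed $W$. The same swap infects your alternation argument: after a $Z^t$ the hanging coordinate is $n$, so the ensuing $X^t$-run must use $\begin{bmatrix} y & n\end{bmatrix}$ and keeps a hanging $n$ throughout (there is no ``free $y$''); a second $Z^t$ before any $Y^t$ then dies instantly via $n\cdot n=0$, not by any argument about reaching the terminal $y$.

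Once you undo the swap --- leading $X^t$-run uses $\begin{bmatrix} y & n\end{bmatrix}$, first and last non-$X$ markers are $Y^t$, trailing $X^t$-run uses $\begin{bmatrix} n & y\end{bmatrix}$ --- the argument of Lemma \ref{lem12} transposes verbatim. The ``if'' direction exhibits the unique contributing summand (the paper writes it out explicitly), and the ``only if'' direction is precisely the forced letter-by-letter chase you outline, only with the orientations corrected; uniqueness of that summand then gives the coefficient $1$.
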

\begin{proof}
     Any arrangement $W$ satisfying conditions (1) to (5) can be expressed in the following form
    \[W= X[k_1] \cdot Y^{t} \cdot X[k_2] \cdot Z^{t} X[k_3] \cdot Y^{t} \cdots Z^{t} \cdot X[k_{l-1}] \cdot Y^{t} \cdot X[k_{l}]. \]
    We first prove the ``if" part of the lemma. If $W$ is a general expression as above then we look at the following term of $W$ 
    \begin{multline}
    \underbrace{\begin{bmatrix} y & n \end{bmatrix} \cdots \begin{bmatrix} y & n \end{bmatrix}}_{k_{1} \, \, \text{times}} \cdot \begin{bmatrix} y & y \end{bmatrix} \cdot \underbrace{\begin{bmatrix} n & y \end{bmatrix} \cdots \begin{bmatrix} n & y \end{bmatrix}}_{k_2 \, \, \text{times}} \cdot \begin{bmatrix} n & n \end{bmatrix} \cdot \underbrace{\begin{bmatrix} y & n \end{bmatrix} \cdots \begin{bmatrix} y & n \end{bmatrix}}_{k_{3} \, \, \text{times}} \cdot  \\ \begin{bmatrix} y & y \end{bmatrix} \cdots \begin{bmatrix} n & n \end{bmatrix} \cdot \underbrace{\begin{bmatrix} y & n \end{bmatrix} \cdots \begin{bmatrix} y & n \end{bmatrix}}_{k_{l-1} \, \, \text{times}} \cdot \begin{bmatrix} y & y \end{bmatrix} \cdot \underbrace{\begin{bmatrix} n & y \end{bmatrix} \cdots \begin{bmatrix} n & y \end{bmatrix}}_{k_l \, \, \text{times}}.
\end{multline} \label{**}
Now by virtue of the relation 
\( yn =ny=n\) in the matching algebra $\mathcal{M}$ we conclude the expression is equal to \[ \begin{bmatrix}
    y & n & \cdots & n & y
\end{bmatrix}_{1 \times (n+1)}.\]
 Conversely, let $W$ be an expression with $\{X^{t}, Y^{t}, Z^{t} \}$ of length $n$ that produces the row vector 
 \[ \begin{bmatrix}
    y & n & \cdots & n & y
\end{bmatrix}_{1 \times (n+1)}.\]
In order to produce the row vector \[ \begin{bmatrix}
    y & n & \cdots & n & y
\end{bmatrix}_{1 \times (n+1)}.\] out of $\{X^{t}, Y^{t}, Z^{t} \}$, we can not start or end with $Z^{t}$. This is simply because we can not get rid of initial $n$ in the formal case or the final $n$ in the later case through internal multiplication. Thus, each term must start or end with $X^{t}$ or $Y^{t}$. This proves the necessity of properties (2) and (3).

Next we prove the necessity of conditions (1) and (4).  If $W$ already start with $Y^{t}$ there is nothing to prove. Without loss of generality we may assume $W$ start with $X^{t}$. In this case, the vector $\begin{bmatrix}
    n & y
\end{bmatrix}$ can not contribute as the initial $n$ can not be turned to $y$ through internal multiplication. Thus, we may assume the first term is given by $\begin{bmatrix}
    y & n
\end{bmatrix}$. For the second term, it has to start with $y$. If not, then the second entry will collapse the $n$ by virtue of the relation 
\( n \cdot n = n^2 =0\) in the matching algebra $\mathcal{M}$.
Hence, the second term must either be $Y^{t}$ or $X^{t}$. If it is $Y^{t}$, there is nothing to prove in this case. Otherwise, it must be $X^{t}$. As before the only term that shall contribute is $\begin{bmatrix}
    y & n
\end{bmatrix}$. Continuing in this way, we see that $Y^{t}$ must be used eventually to turn the final $n$ to $y$. This concludes the argument. 

A similar argument from backward implies the other part of (4).

Finally, we prove the condition (5) is also necessary. Suppose there is a $Z^{t}$ in $W$. We shall show that there must exist $Y^{t}$ prior and subsequent to $Z^{t}$ with number of $X^{t}$-s in between i.e. $W$ must look like 
\[ W= \left( \cdots Y^{t}\cdot X[k] \cdot Z^{t} \cdot X[l] \cdot Y^{t} \cdots \right),\]
for some non-negative integers $k$ and $l$. As $Z^{t}= \begin{bmatrix}
    n & n
\end{bmatrix}$, in order to turn the second entry of $Z^{t}$ to $n$ through internal multiplication, the next term must start with $y$. Hence, the term next to $Z^{t}$ must be a $Y^{t}$ or a $X^{t}$. If it is $Z^{t}$ then we take $l=0$ and we are done. If it is $X^{t}$, then the term that must contribute is given by $\begin{bmatrix}
    y & n
\end{bmatrix}$. We follow the same argument. In order to turn the second entry $n$ of $X^{t}$ to $n$ through internal multiplication, the following term must start with $y$. Thus, it is either $Y^{t}$ or $X^{t}$. If it is $Y^{t}$, then we take $l=1$ and we are done. If it is $X^{t}$, then the term that contribute would be $\begin{bmatrix}
    y & n
\end{bmatrix}$. Continuing in this way, we see that after $l$ many $X^{t}$ we must use $Y^{t}= \begin{bmatrix}
    y & y
\end{bmatrix}$. 
Using the same argument backward we conclude that there must exist a non-negative integer $k$ such that, $W$ must look like $\left( \cdots Y^{t} \cdot X[k]Z^{t} \cdots \right)$. Combining the two, we conclude that the condition (5) is also necessary.

Finally, if the position $X^{t}, Y^{t}$ and $Z^{t}$ satisfies the conditions (1) to (5), then \eqref{**} is the only way to produce 
$ \begin{bmatrix}
    y & n & \cdots & n & y
\end{bmatrix}_{1 \times (n+1)}$ with coefficient $1$.
\end{proof}

\noindent Using the Lemma \ref{lem14}--\ref{lem22} and by substituting $X \leftrightarrow 0 , Y \leftrightarrow 1$ and $Z \leftrightarrow -1$, we conclude the following theorem.

\begin{theorem}
    Let $N_{-}(A)$ denotes the number of $-1$ in the alternating sign matrix $A$. Then we have the following relation 
    \[ M(\ad(n-1))= \sum_{A \in \mathcal{A}_n} 2^{N_{-}(A)}. \]
\end{theorem}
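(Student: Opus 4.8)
The plan is to mirror exactly the structure of the proof of Theorem~\ref{thm:match-asm-1}, but now working with the graph $\bar G_n$ and the expression $Y(X\otimes Y+Y\otimes X)^n Y$ in place of $G_n$ and $X(X\otimes Y+Y\otimes X)^n X$. First I would observe, via the Patching Lemma (Lemma~\ref{lem3}) together with Lemma~\ref{lem13}, that $v_{\bar G_n} = Y(X\otimes Y+Y\otimes X)^n Y$, and recall the established identity $M(\ad(n-1)) = M(\bd(n))$ coming from forced edges, so that $M(\ad(n-1))$ is the coefficient of the all-$n$ matrix in $v_{\bd(n)}$, which is built by patching $n+1$ copies of $\bar G_n$-type rows together column-wise.

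Next I would invoke the seven structural observations about $Y(X\otimes Y+Y\otimes X)^n Y$ — each term has at least one $Y$; starts/ends with $X$ or $Y$; starts with $X(k)\otimes Y\otimes\cdots$ and symmetrically ends with $\cdots\otimes Y\otimes X(\ell)$; positions of $Y$ and $Z$ alternate; the coefficient of each term is $2^{\#(Z)}$; and every arrangement satisfying (1)--(6) actually appears — all of which are proved in the run of lemmas culminating just before Lemma~\ref{lem22}. Under the substitution $X\leftrightarrow 0$, $Y\leftrightarrow 1$, $Z\leftrightarrow -1$, conditions (1)--(4) force each row to be a $\{0,1,-1\}$ vector beginning and ending with a nonzero entry that is a $1$ (since a row starts with $X(k)\otimes Y$, the first nonzero is $Y\leftrightarrow 1$), and condition (5) forces the nonzero entries to alternate $1,-1,1,\dots,1$ along the row; hence each row has row-sum $1$ and the sign-alternation property. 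So the terms of $v_{\bar G_n}$ are precisely in bijection with the valid rows of an $(n+1)\times(n+1)$ ASM, with multiplicity $2^{\#(Z)} = 2^{(\text{number of }-1\text{'s in that row})}$.

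Then I would carry out the column-wise analysis exactly as in Lemma~\ref{lem12}, but replacing the target column $\begin{bmatrix} n & n & \cdots & n\end{bmatrix}^t$ by $\begin{bmatrix} y & n & \cdots & n & y\end{bmatrix}^t$ — this is the content of Lemma~\ref{lem22}. The key point is that, reading a column of the stacked matrix $v_{\bd(n)}$ under the same substitution, the requirement that the internal (vertical) multiplications collapse a column to $\begin{bmatrix} y & n & \cdots & n & y\end{bmatrix}^t$ translates into: the column, read as a $\{0,1,-1\}$ vector, must begin and end with $1$ and have alternating nonzero entries with column-sum $1$. Combining the row conditions and the column conditions, the matrices surviving in $v_{\bd(n)}$ with all-$n$ entries are exactly the $(n+1)\times(n+1)$ ASMs, each contributing multiplicative weight $\prod_{\text{rows}} 2^{(\#\,-1\text{'s in row})} = 2^{N_-(A)}$ by Lemma~\ref{lem22} (which says each such contribution has coefficient $1$ after the collapse, so the weight is carried entirely by the $2^{\#(Z)}$ coefficients of the original terms). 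This yields
\[ M(\ad(n-1)) = M(\bd(n)) = \sum_{A\in\mathcal{A}_{n+1}} 2^{N_-(A)}, \]
wait — I should be careful with the indexing: the graph $\bd(n)$ has $n$ rows, so the stacked matrix is $(n+1)\times(n+1)$ only after accounting for the boundary, and one checks directly that the surviving matrices are exactly the $n\times n$ ASMs, matching the statement $M(\ad(n-1)) = \sum_{A\in\mathcal{A}_n} 2^{N_-(A)}$.

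The main obstacle I anticipate is precisely this bookkeeping of dimensions and the verification that the column-wise conditions in Lemma~\ref{lem22} are genuinely dual to the row-wise conditions — i.e.\ that the same combinatorial constraints (nonzero entries alternate, outer nonzeros are $+1$, line-sums are $1$) emerge from collapsing a column to $\begin{bmatrix} y & n & \cdots & n & y\end{bmatrix}^t$ as emerge from the row analysis. Once Lemma~\ref{lem22} is in hand this is routine, but making the bijection between surviving terms of $v_{\bd(n)}$ and ASMs of the correct order airtight, and confirming that no spurious cancellation or extra multiplicity occurs (using that distinct terms do not mix, exactly as argued in Lemmas~\ref{lem6}, \ref{lem10} and their analogues), is where the care is needed. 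With that, substituting $X\leftrightarrow 0$, $Y\leftrightarrow 1$, $Z\leftrightarrow -1$ and reading off the coefficient of the all-$n$ matrix gives the claimed identity.
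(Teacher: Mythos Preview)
Your proposal is correct and follows essentially the same approach as the paper: the paper's proof of this theorem is literally the one sentence ``Using Lemmas~\ref{lem14}--\ref{lem22} and by substituting $X \leftrightarrow 0$, $Y \leftrightarrow 1$ and $Z \leftrightarrow -1$, we conclude the following theorem,'' and your write-up is a faithful (indeed more explicit) unpacking of exactly that argument. The momentary wobble over $\mathcal{A}_{n+1}$ versus $\mathcal{A}_n$ is harmless once you track that $\bar G_n$ contributes rows of length $n+1$ in the $\{X,Y,Z\}$ alphabet and that the column collapse of Lemma~\ref{lem22} then forces the ASM to be $n\times n$, matching the statement.
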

This is the other part of Theorem \ref{thm:2-enum} which now gives another proof of Theorem \ref{adm}.

\end{section}

\begin{section}{Further Applications}\label{sec:conc}

The method described by Theorem \ref{thm:main} is quite general and we can apply it to several other cases. In this paper we have only applied it to the case of an unrestricted Aztec Diamond and counted the number of perfect matchings, thereby reproving Theorem \ref{adm}. In a sequel to this paper, we plan to apply our method to two separate problems, both of which are unsolved as of now.

The symmetry classes of domino tilings of $\ad(n)$ under the action of the dihedral group of order $8$ have also been studied by Ciucu \cite{CiucuSymmetry} and Yang \cite{Yang}. So far, all the number of perfect matchings of the symmetry classes except that of a diagonally symmetric and diagonally \& anti-diagonally symmetric Aztec Diamonds have been enumerated with product formulas. In the sequel to this work we plan to examine the remaining two cases and shed some light on possible formulas to enumerate the number of perfect matchings of these missing cases.
\end{section}

\end{document}